\newcolumntype{F}{>{$}c<{\hspace{-0.9ex}$}}
\newcolumntype{:}{>{$}m{0.8ex}<{$}}
\newcolumntype{R}{>{$}r<{$}}
\newcolumntype{C}{>{$}c<{$}}
\newcolumntype{L}{>{$}l<{$}}
\newcolumntype{N}{@{}>{$}l<{$}}
\newcommand{\linesep}[1]{\renewcommand{\arraystretch}{#1}}
\newlength\horspace
\newcommand{\h}[1][1.0]{\hspace{#1\horspace}}
\newlength\verspace
\renewcommand{\v}[1][1.0]{\vspace{#1\verspace}\xspace}
\newlength\negverspace
\newcommand\semiHuge{\@setfontsize\semiHuge{22.72}{27.38}}
\newcommand\semiLarge{\@setfontsize\semiLarge{12}{14}}
\tikzset{iso/.style={draw=none,every to/.append style={edge node={node [sloped, allow upside down, auto=false]{$\cong$}}}}}
\tikzset{adjunction/.style={draw=none,every to/.append style={edge node={node [sloped, allow upside down, auto=false]{$\dashv$}}}}}
\tikzset{simeq/.style={draw=none,every to/.append style={edge node={node [sloped, allow upside down, auto=false]{$\simeq$}}}}}
\tikzset{simeqS/.style={draw=none,every to/.append style={edge node={node [sloped, allow upside down, auto=false]{$\raisebox{0.8em}{$\simeq$}$}}}}}
\tikzset{aiso/.style={simeqS,preaction={draw,->}}}
\tikzset{dotdot/.style={dash pattern=on 0.25ex off 0.2ex, dash phase=0ex}}
\tikzset{RightA/.style={double distance=3.5pt,>={Implies},->},%
	triple/.style={-,preaction={draw,RightA}},%
	quadruple/.style={preaction={draw,RightA,shorten >=0pt},shorten >=1pt,-,double,double distance=0.2pt}}
\newtheorem{teor}{Theorem}[section]
\newtheorem{prop}[teor]{Proposition}
\theoremstyle{definition}
\newtheorem{defne}[teor]{Definition}
\newtheorem{rem}[teor]{Remark}
\newtheorem{exampl}[teor]{Example}
\newtheorem{cons}[teor]{Construction}
\def\nameit#1{\textrm{#1}~}
\def\defx{\nameit{Definition}}
\def\thex{\nameit{Theorem}}
\def\prox{\nameit{Proposition}}
\def\conx{\nameit{Construction}}
\def\remx{\nameit{Remark}}
\def\exax{\nameit{Example}}
\NewDocumentEnvironment{cd}{s O{7} O{7} b}{%
	\IfBooleanF{#1}{\begin{equation*}}\begin{tikzcd}[row sep=#2ex,column sep=#3ex,ampersand replacement=\&]
			#4
		\end{tikzcd}\IfBooleanF{#1}{\end{equation*}}\ignorespacesafterend}{}
\newenvironment{enum}{\begin{enumerate}[label=$($\hspace{0.12ex}\roman*\hspace{0.075ex}$)$]}{\end{enumerate}}
\newenvironment{enumT}{\begin{enumerate}[label=$($\hspace{-0.1ex}\roman*\hspace{0.13ex}$)$]}{\end{enumerate}}
\newenvironment{fun}{\[\begin{tabular}{F:RCL}}{\end{tabular}\]\ignorespacesafterend}
\newenvironment{eqD*}{\begin{equation*}}{\end{equation*}\ignorespacesafterend}
\def\:{\colon}
\providecommand\ordinarycolon{:}
\def\vcentcolon{\mathrel{\mathop\ordinarycolon}}
\newcommand{\deq}{\mathrel{\vcentcolon\mkern-1.2mu=}}
\def\phi{\varphi}
\def\O{\ensuremath{\Omega}\xspace}
\newcommand{\romanuppercase}[1]{\uppercase\expandafter{\romannumeral #1\relax}}
\newcommand{\p}[1]{\big(\mkern1mu{#1}\mkern1mu\big)}
\newcommand{\pteor}[1]{$($\h[-1.9]{#1}\h[0.6]$)$}
\def\dfn#1{{\bfseries\itshape #1}}
\def\predfn#1{{\itshape #1}}
\newcommand{\scaleu}[2][1.2]{{\scalebox{#1}{$#2$}}}
\newcommand{\ov}[1]{\overline{#1}}
\def\t#1{\widetilde{#1}} 
\def\b{_{\bullet}}
\def\bl{_{\bullet,\operatorname{lax}}}
\def\c{\circ}
\newcommand{\cont}{\subseteq}
\newcommand{\st}{^{\ast}}
\newcommand{\stb}{_{\ast}}
\newcommand{\slant}[2]{{\raisebox{.05em}{$#1$}\mkern-1mu\left/\raisebox{-.15em}{$#2$}\right.}}
\newcommand{\sliceslant}[2]{{\raisebox{.1em}{$#1$}\mkern-1mu\left/\raisebox{-.25em}{$#2$}\right.}}
\newcommand{\oplaxsliceslant}[2]{{\raisebox{.1em}{$#1$}\mkern-1mu\left/_{\mkern-4.1mu\oplax}\right.\raisebox{-.25em}{$#2$}}}
\newcommand{\douwidehat}[2]{%
	\sbox0{$\m@th#1\widehat{\hphantom{#2}}$}%
	\sbox2{$\m@th#1x$}
	\sbox4{$\m@th#1#2$}
	\dimen0=\ht0
	\advance\dimen0 -.8\ht2
	\dimen2=\dp4
	\rlap{%
		\raisebox{\dimexpr\dimen0-\dimen2}{%
			\scalebox{1}[-1]{\box0}}}{#2}}
\DeclareFontFamily{OT1}{pzc}{}
\DeclareFontShape{OT1}{pzc}{m}{it}{<->s*[1.21]pzcmi7t}{}
\DeclareMathAlphabet{\mathpzc}{OT1}{pzc}{m}{it}
\DeclareFontFamily{U}{dutchcal}{\skewchar\font=45}
\DeclareFontShape{U}{dutchcal}{m}{n}{<->s*[1.05] dutchcal-r}{}
\DeclareMathAlphabet{\mathlcal}{U}{dutchcal}{m}{n}
\newcommand{\catfont}[1]{\ensuremath{\mathpzc{#1}}\xspace}
\newcommand{\A}{\catfont{A}}
\newcommand{\B}{\catfont{B}}
\newcommand{\C}{\catfont{C}}
\newcommand{\D}{\catfont{D}}
\newcommand{\E}{\catfont{E}}
\newcommand{\F}{\catfont{F}}
\newcommand{\V}{\catfont{V}}
\newcommand{\U}{\catfont{U}}
\newcommand{\W}{\catfont{W}}
\def\L{\catfont{L}}
\newcommand{\I}{\ensuremath{\mathcal{I}}\xspace}
\newcommand{\1}{\catfont{1}}
\newcommand{\2}{\catfont{2}}
\newcommand{\Set}{\catfont{Set}}
\newcommand{\Cat}{\catfont{Cat}}
\newcommand{\Catzero}{\ensuremath{\Cat_{0}}\xspace}
\newcommand{\CAT}{\catfont{Cat}}
\newcommand{\CATlarge}{\catfont{CAT}}
\newcommand{\twoCAT}{2\h[2]\mbox{-}\CAT}
\newcommand{\twoCATlax}{2\h[2]\mbox{-}\CAT_{\h[-3]\lax}}
\NewDocumentCommand{\Fib}{t' t" t+ t? O{n} O{n} o}{
	% cart clov small propertyP discr(d)-op(o)-discop(b)-twoset(t)-clov(c)-split(s) cat base
	\ensuremath{\catfont{\ifx#5d{D}\else{\ifx#5o{Op}\else{\ifx#5b{DOp}\else{\ifx#5t{D2Op}\else{\ifx#5c{Cl\h[-3]}\else{\ifx#5s{Sp}\fi}\fi}\fi}\fi}\fi}\fi{Fib}}\IfBooleanT{#3}{^{\h[3.7]\opn{s}\h[-3.2]}}\IfBooleanT{#4}{^{\h[3.7]\opn{P}\h[-3,7]}}{\IfBooleanTF{#1}{_{\h[0.4]\opn{cart}\ifx#6n{}\else{\h[-1.4],\h[0.4]{#6}}\fi}}{\IfBooleanTF{#2}{_{\h[0.4]\opn{clov}\ifx#6n{}\else{\h[-1.4],\h[0.4]{#6}}\fi}}{\ifx#6n{}\else{_{\h[0.4]{#6}}}\fi}}}\IfNoValueF{#7}{\h[-1]\left({#7}\right)}}\xspace
}
\NewDocumentCommand{\Sh}{o m}{
	% topol cat
	\ensuremath{\catfont{Sh}\hspace{-0.15ex}\left({#2}\IfNoValueF{#1}{,{#1}}\right)}
}
\NewDocumentCommand{\St}{t+ o m}{
	% strict topol cat
	\ensuremath{\catfont{St}\IfBooleanT{#1}{_{\opn{strict}}}\hspace{-0.15ex}\left({#3}\IfNoValueF{#2}{,{#2}}\right)}
}
\NewDocumentCommand{\Alg}{t+ t' m}{
	% ps co monad
	\ensuremath{\IfBooleanT{#1}{\catfont{Ps}\mbox{-}}{#3}\mbox{-}\catfont{\IfBooleanT{#2}{Co}Alg}}
}
\newcommand{\slice}[2]{\sliceslant{#1}{#2}}
\newcommand{\oplaxslice}[2]{\oplaxsliceslant{#1}{#2}}
\newcommand{\HomC}[3]{{#1}\left({#2},\h[1]{#3}\right)}
\newcommand{\m}[2]{\ensuremath{\left[#1,#2\right]}\xspace}
\newcommand{\mlax}[2]{\ensuremath{\left[#1,#2\right]_{\lax}}\xspace}
\newcommand{\moplax}[2]{\ensuremath{\left[#1,#2\right]_{\oplax}}\xspace}
\newcommand{\mPsoplax}[2]{\ensuremath{\opn{Ps}\left[#1,#2\right]_{\oplax}}\xspace}
\newcommand{\Sub}[1]{\operatorname{Sub}\hspace{-0.1ex}\left({#1}\right)}
\newcommand{\opn}[1]{\operatorname{#1}}
\newcommand{\y}[1]{\ensuremath{\operatorname{y}\hspace{-0.2ex}\left({#1}\right)}}
\DeclareMathOperator{\yy}{y}
\newcommand{\yyop}{\yy\op}
\newcommand{\id}[1]{\operatorname{id}_{#1}}
\newcommand{\Id}[1]{\operatorname{Id}_{#1}}
\newcommand{\op}{\ensuremath{^{\operatorname{op}}}}
\newcommand{\co}{\ensuremath{^{\operatorname{co}}}}
\newcommand{\restr}[2]{{\left.\kern-\nulldelimiterspace {#1}\vphantom{\big|} \right|_{#2}}}
\newcommand{\dom}{\operatorname{dom}}
\DeclareMathOperator{\colim}{colim}
\DeclareMathOperator{\lax}{lax}
\DeclareMathOperator{\oplax}{oplax}
\DeclareMathOperator{\pseudo}{pseudo}
\newcommand{\wcolim}[2]{{\colim}^{#1}\h{#2}}
\newcommand{\oplaxcolim}[2][]{\oplax\mbox{-}\h[1.5]\wcolim{#1}{#2}}
\newcommand{\G}[1]{\catfont{G}_{#1}}
\newcommand{\Gprime}[1]{\catfont{G}'_{#1}}
\newcommand{\Int}[1]{\ensuremath{\int \hspace{-0.35ex} #1}}
\newcommand{\Intdiag}[1]{\ensuremath{\scaleu{\int} \hspace{-0.15ex} #1}}
\newcommand{\Intop}[1]{\ensuremath{\int\op \hspace{-0.35ex} #1}}
\newcommand{\Intopdiag}[1]{\ensuremath{\scaleu{\int}\op \hspace{-0.15ex} #1}}
\newcommand{\Groth}[1]{\Int{#1}}
\newcommand{\Grothdiag}[1]{\Intdiag{#1}}
\newcommand{\Grothop}[1]{\Intop{#1}}
\newcommand{\Grothopdiag}[1]{\Intopdiag{#1}}
\newcommand{\groth}[1]{\mathcal{G}\mkern-1.4mu\left(#1\right)}
\newcommand{\grothprime}[1]{\mathcal{G}'\mkern-1.4mu\left(#1\right)}
\newcommand{\too}{\longrightarrow}
\newcommand{\mto}{\mapsto}
\newcommand{\ito}{\hookrightarrow}
\newcommand{\ar}[2][]{\xrightarrow[#1]{#2}}
\def\xlongrightarrowfill@{\arrowfill@\relbar\relbar\longrightarrow}
\newcommand{\arr}[2][]{%
	\ext@arrow 0099\xlongrightarrowfill@{#1}{#2}}
\newcommand{\aarr}[2][]{%
	\ext@arrow 0099\xlongrightarrowfill@{#1}{#2}} % in tikz
\newcommand{\aR}[2][]{%
	\ext@arrow 0055{\Rightarrowfill@}{#1}{#2}}
\def\xLongrightarrowfill@{\arrowfill@\Relbar\Relbar\Longrightarrow}
\newcommand{\aRR}[2][]{%
	\ext@arrow 0099\xLongrightarrowfill@{#1}{#2}}
\def\aitofill@{\arrowfill@{\lhook\joinrel\relbar}\relbar\rightarrow}
\newcommand{\aito}[2][]{%
	\ext@arrow 3095\aitofill@{#1}{#2}}
\def\Longaitofill@{\arrowfill@{\lhook\joinrel\relbar\joinrel\relbar}\relbar\rightarrow}
\newcommand{\aitoo}[2][]{%
	\ext@arrow 0099\Longaitofill@{#1}{#2}}
\def\xlongleftarrowfill@{\arrowfill@\longleftarrow\relbar\relbar}
\newcommand{\all}[2][]{%
	\ext@arrow 0099\xlongleftarrowfill@{#1}{#2}}
\newcommand{\aL}[2][]{%
	\ext@arrow 0055{\Leftarrowfill@}{#1}{#2}}
\def\xLongleftarrowfill@{\arrowfill@\Longleftarrow\Relbar\Relbar}
\newcommand{\aLL}[2][]{%
	\ext@arrow 0099\xLongleftarrowfill@{#1}{#2}}
\def\xmapstofill@{\arrowfill@{\mapstochar\relbar}\relbar\rightarrow}
\newcommand{\am}[2][]{%
	\ext@arrow 0395\xmapstofill@{#1}{#2}}
\def\xlongmapstofill@{\arrowfill@\relbar\relbar\longmapsto}
\newcommand{\amm}[2][]{%
	\ext@arrow 0399\xlongmapstofill@{#1}{#2}}
\newcommand{\eqq}{\DOTSB\protect\Relbar\protect\joinrel\Relbar}
\def\xeqqfill@{\arrowfill@\Relbar\Relbar\eqq}
\newcommand{\aeqq}[2][]{%
	\ext@arrow 0099\xeqqfill@{#1}{#2}}
\def\xRrightarrowfill@{\arrowfill@\equiv\equiv\Rrightarrow}
\newcommand{\aM}[2][]{\ext@arrow 0359\xRrightarrowfill@{#1}{#2}}
\newcommand{\Llongrightarrow}{%
	\DOTSB\protect\equiv\protect\joinrel\Rrightarrow}
\def\xLlongrightarrowfill@{\arrowfill@\equiv\equiv\Llongrightarrow}
\newcommand{\aMM}[2][]{%
	\ext@arrow 0099\xLlongrightarrowfill@{#1}{#2}}
\newcommand{\alax}[1]{\aR[\lax]{#1}}
\newcommand{\aoplax}[1]{\aR[\oplax]{#1}}
\newcommand{\apseudo}[1]{\aR[\pseudo]{#1}}
\newcommand{\iso}{\cong}
\newcommand{\aequi}{\ensuremath{\stackrel{\raisebox{-1ex}{\kern-.3ex$\scriptstyle\sim$}}{\rightarrow}}}
\newcommand{\aequii}{\ensuremath{\stackrel{\raisebox{-1ex}{\kern-.3ex$\scriptstyle\sim$}}{\longrightarrow}}}
\newcommand{\PB}[1]{\arrow[#1,phantom,"\scalebox{1.6}{\color{black}$\lrcorner$}",very near start]}
\newcommand{\Ar}[4][]{\arrow[#2,"{#3}"{#1},""{name=#4, anchor=center}]}
\newcommand{\Ars}[4][]{\arrow[#2,"{#3}"'{#1},""{name=#4, anchor=center}]}
\newcommand{\Arb}[6][]{\arrow[#2,"{#3}"{#1},from=#4,to=#5,shorten <= #6 em, shorten >= #6 em]}
\newcommand{\Arbs}[6][]{\arrow[#2,"{#3}"'{#1},from=#4,to=#5,shorten <= #6 em, shorten >= #6 em]}
\NewDocumentCommand{\fib}{O{n} O{2.3} mmm}{%
	% mode:hook;equal;Right;op:i-e-R-o dim top mid bot
	\begin{cd}*[#2][5]
		{#3}\ifx#1n{\arrow[d,"{\,\scaleu{#4}}"]}\else{\ifx#1i{\arrow[d,hookrightarrow,"{\,\scaleu{#4}}"]}\else{\ifx#1e{\arrow[d,equal,"{\,\scaleu{#4}}"]}\else{\ifx#1R{\arrow[d,Rightarrow,"{\,\scaleu{#4}}"]}\fi}\fi}\fi}\fi\\
		{#5}\ifx#1o{\arrow[u,"{\,\scaleu{#4}}"']}\fi
	\end{cd}\xspace
}
\NewDocumentCommand{\fibdiag}{O{n} O{2.3} mmm}{%
	% mode:hook;equal;Right;op:i-e-R-o dim top mid bot
	\begin{cd}*[#2][5]
		{#3}\ifx#1n{\arrow[d,"{\,{#4}}"]}\else{\ifx#1i{\arrow[d,hookrightarrow,"{\,{#4}}"]}\else{\ifx#1e{\arrow[d,equal,"{\,{#4}}"]}\else{\ifx#1R{\arrow[d,Rightarrow,"{\,{#4}}"]}\fi}\fi}\fi}\fi\\
		{#5}\ifx#1o{\arrow[u,"{\,{#4}}"']}\fi
	\end{cd}\xspace
}
\NewDocumentCommand{\sq}{s O{n} O{7} O{7} O{} O{2.7} O{2.2} O{0.5} O{n}}{%
	% N PBlaxoplax:p-l-o 7 7 twocell sh< sh> pos equal:l-d-r Square
	\def\foosq##1##2##3##4##5##6##7##8{%
		\IfBooleanTF{#1}{\begin{cd}*}{\begin{cd}}[#3][#4]
				{##1}\ifx#2p{\PB{rd}}\fi\arrow[r,"{##5}"]\ifx#9l{\arrow[d,equal,"{##6}"']}\else{\arrow[d,"{##6}"']}\fi\&{##2}\ifx#9r{\arrow[d,equal,"{##7}"]}\else{\arrow[d,"{##7}"]}\fi\ifx#2l{\arrow[ld,Rightarrow,shorten <=#6ex,shorten >=#7ex,"{#5}"{pos=#8}]}\fi\\
				{##3}\ifx#9d{\arrow[r,equal,"{##8}"']}\else{\arrow[r,"{##8}"']}\fi\ifx#2o{\arrow[ur,Rightarrow,shorten <=#6ex,shorten >=#7ex,"{#5}"{pos=#8}]}\fi\&{##4}
		\end{cd}}%
		\foosq}
\NewDocumentCommand{\sqs}{s O{n} O{7} O{7} O{} O{} O{} O{}}{%
	% N PB:p 7 7 top left right bot Square
	\def\foosqs##1##2##3##4##5##6##7##8{%
		\IfBooleanTF{#1}{\begin{cd}*}{\begin{cd}}[#3][#4]
				{##1}\ifx#2p{\PB{rd}}\fi\arrow[r,"{##5}"#5]\arrow[d,"{##6}"'#6]\&{##2}\arrow[d,"{##7}"#7]\\
				{##3}\arrow[r,"{##8}"'#8]\&{##4}
		\end{cd}}%
		\foosqs}
\NewDocumentCommand{\nat}{s O{n} O{7} O{7} O{2.7} O{2.2} O{0.5} O{n}}{%
	% N PBlaxoplax:p-l-o 7 7 sh< sh> pos equal:l-r Square
	\def\foonat##1##2##3##4##5##6{%
		\IfBooleanTF{#1}{\sq*}{\sq}[#2][#3][#4][{##1}_{##4}][#5][#6][#7][#8]{{##2}\ifx#8l{}\else{({##5})}\fi}{{##3}\ifx#8r{}\else{({##5})}\fi}{{##2}\ifx#8l{}\else{({##6})}\fi}{{##3}\ifx#8r{}\else{({##6})}\fi}{{##1}_{##5}}{\ifx#8l{}\else{{##2}({##4})}\fi}{\ifx#8r{}\else{{##3}({##4})}\fi}{{##1}_{##6}}}%
	\foonat}
\NewDocumentCommand{\tr}{s O{4.5} O{6.5} O{0} O{0} O{n} O{0} O{} O{0}}{%
	% N dimcol dimrow longleft longright lax-oplax-iso-equal-normal:l-o-i-e-n shift nametwocell shorten Triangle
	\def\footr##1##2##3##4##5##6{%
		\IfBooleanTF{#1}{\begin{cd}*}{\begin{cd}}[#3][#2]
				{##1}\arrow[rr,"{##4}"]
				\Ars[inner sep =0.2ex]{dr}{##5}{A}\&[#4ex]\&[#5ex]{##2}\Ar[inner sep =0.2ex]{ld}{##6}{B}\\
				\&{##3}
				\ifx#6l{\Arb{Rightarrow,shift right=#7em}{#8}{A}{B}{#9}}\else{\ifx#6o{\Arbs{Rightarrow,shift right=#7em}{#8}{B}{A}{#9}}\else{\ifx#6i{\Arbs[inner sep=0.9ex]{iso,shift right=#7em}{#8}{A}{B}{#9}}\else{\ifx#6e{\Arb{equal,shift right=#7em}{#8}{A}{B}{#9}}\else{}\fi}\fi}\fi}\fi
		\end{cd}}%
		\footr}
\NewDocumentCommand{\trslice}{s t+ O{7} O{7}}{%
	% N op 7 7
	\def\footrslice##1##2##3##4##5##6{%
		\IfBooleanTF{#1}{\begin{cd}*}{\begin{cd}}[#3][#4]
				{##1}\arrow[r,"{##4}"]\IfBooleanF{#2}{\arrow[d,"{##5}"']}\&{##2}\\
				{##3}\IfBooleanT{#2}{\arrow[u,"{##5}"]}\arrow[ru,"{##6}"']
		\end{cd}}%
		\footrslice}
\NewDocumentCommand{\tc}{s t+ O{7} O{30} O{} O{} O{} o}{
	% N descr dim bend top bot twocell sh
	\def\footc##1##2##3##4##5{%
		\FPmul\Mulresulttwo{#3}{#3}%
		\FPmul\Mulresult{0.0026}{\Mulresulttwo}%
		\IfBooleanTF{#1}{\begin{cd}*}{\begin{cd}}[#3][#3]
				{##1}\Ar[#5]{r,bend left=#4}{##3}{A}\Ars[#6]{r,bend right=#4}{##4}{B}\&{##2}
				\IfBooleanTF{#2}{\Arb[description,pos=0.49]}{\Arb}{Rightarrow #7}{\mkern1mu {##5}}{A}{B}{\IfNoValueTF{#8}{\Mulresult}{#8}}
		\end{cd}}%
		\footc}
\NewDocumentCommand{\tcwl}{s t+ O{7} O{30} O{} O{} O{} o O{-2}}{
	% N descr dim bend top bot twocell sh lengthmapTwoCell external
	\def\footcwl##1##2##3##4##5##6##7{%
		\FPmul\Mulresulttwo{#3}{#3}%
		\FPmul\Mulresult{0.0026}{\Mulresulttwo}%
		\IfBooleanTF{#1}{\begin{cd}*}{\begin{cd}}[#3][#3]
				##6 \arrow[r,"{##7}"]\&[#9ex]{##1}\Ar[#5]{r,bend left=#4}{##3}{A}\Ars[#6]{r,bend right=#4}{##4}{B}\&{##2}\IfBooleanTF{#2}{\Arb[description,pos=0.49]}{\Arb}{Rightarrow #7}{\mkern1mu {##5}}{A}{B}{\IfNoValueTF{#8}{\Mulresult}{#8}}
		\end{cd}}%
		\footcwl}
\NewDocumentCommand{\tcwr}{s t+ O{7} O{30} O{} O{} O{} o O{-2}}{
	% N descr dim bend top bot twocell sh lengthmapTwoCell external
	\def\footcwr##1##2##3##4##5##6##7{%
		\FPmul\Mulresulttwo{#3}{#3}%
		\FPmul\Mulresult{0.0026}{\Mulresulttwo}%
		\IfBooleanTF{#1}{\begin{cd}*}{\begin{cd}}[#3][#3]
				{##1}\Ar[#5]{r,bend left=#4}{##3}{A}\Ars[#6]{r,bend right=#4}{##4}{B}\&{##2}\arrow[r,"{##7}"]\&[#9ex]##6\IfBooleanTF{#2}{\Arb[description,pos=0.49]}{\Arb}{Rightarrow #7}{\mkern1mu {##5}}{A}{B}{\IfNoValueTF{#8}{\Mulresult}{#8}}
		\end{cd}}%
		\footcwr}
\NewDocumentCommand{\tcv}{s t' O{7} O{30} mmmmm}{
	% N op dim bend TwoCell
	\FPmul\Mulresulttwo{#3}{#3}%
	\FPmul\Mulresult{0.0026}{\Mulresulttwo}%
	\IfBooleanTF{#1}{\begin{cd}*}{\begin{cd}}[#3][#3]
			{#5}\IfBooleanTF{#2}{\Ars{d,leftarrow,bend right=#4}{#7}{A}\Ar{d,leftarrow,bend left=#4}{#8}{B}}{\Ars{d,bend right=#4}{#7}{A}\Ar{d,bend left=#4}{#8}{B}}\\{#6}
			\Arb{Rightarrow}{#9}{A}{B}{\Mulresult}
		\end{cd}}
\NewDocumentCommand{\twonats}{s O{2.2} O{8} O{7} O{1.05} O{3.45} O{2}}{%
	% N lengthequal dims sh< sh> shift Modif
	\def\footwonats##1##2##3##4##5##6##7##8##9{%
		\def\foofootwonats####1####2####3####4####5{%
			\IfBooleanTF{#1}{\begin{cd}*}{\begin{cd}}[#3][#4]
					##1 \Ar{r}{##9}{} \Ars{d,bend right=40}{##5}{A} \Ar{d,bend left=40}{##6}{B} \&
					##2 \Ars{d,bend left}{##8}{Q} \arrow[ld,Rightarrow,shift left=#7,"{####4}"{pos=0.48},shorten <=#5ex, shorten >=#6ex]\&[-2ex]
					##1 \Ar{r}{##9}{} \Ar{d,bend right}{##5}{R} \&
					##2 \Ars{d,bend right=40}{##7}{C} \Ar{d,bend left=40}{##8}{D} \arrow[ld,Rightarrow,shift right=#7,"{####5}"'{pos=0.52},shorten <=#6ex, shorten >=#5ex] \\
					##3 \Ars{r}{####1}{} \&
					##4 \&
					##3 \Ars{r}{####1}{} \& 
					##4
					\Arbs{Rightarrow}{\,{####2}}{B}{A}{0.3}
					\Arbs{Rightarrow}{\,{####3}}{D}{C}{0.3}
					\Arb{equal}{}{Q}{R}{#2}
			\end{cd}}%
			\foofootwonats}\footwonats}
\newcommand{\DOpFiboi}[2][\@nil]{%
	\def\tmp{#1}%
	\ifx\tmp\@nnil{\ensuremath{\slant{\catfont{DOpFib}\left({#2}\right)}}{\iso}}%
	\else{\ensuremath{\slant{\catfont{DOpFib}_{\h[0.4]{#1}}\h[-1]\left({#2}\right)}{\iso}}}\fi}
\newcommand{\pbsqunivv}[9][]{%
	\def\foopbsqunivv##1##2##3##4{%
		\begin{cd}[5][4.5]
			#2\arrow[rrd,bend left=24,"{#3}"{inner sep=0.35ex}]\arrow[rdd,bend right=37,"{#4}"']\arrow[rd,dashed,"{#5}"{#1},shorten <=-0.75ex,shorten >=-0.3ex]\&[-4ex]\\[-4.4ex]
			\&#6 \arrow[r,"{##1}"] \arrow[d,"{##2}"'] \PB{rd} \& #7 \arrow[d,"{##3}"] \\
			\&#8 \arrow[r,"{##4}"'] \& #9
	\end{cd}}%
	\foopbsqunivv%
}
\newcommand{\fibsq}[8][6]{%
	\begin{cd}[5.85][#1]
		#2 \arrow[r,"{#6}"] \arrow[d,mapsto,"{#7}"'] \& #3 \arrow[d,mapsto,"{#7}"] \\
		#4 \arrow[r,"{#8}"'] \& #5
\end{cd}}
\newcommand{\fibsqN}[8][6]{%
	\begin{cd}*[5.3][#1]
		#2 \arrow[r,"{#6}"] \arrow[d,mapsto,"{#7}"'] \& #3 \arrow[d,mapsto,"{#7}"] \\
		#4 \arrow[r,"{#8}"'] \& #5
\end{cd}}
\newcommand{\fibsquniv}[8][6]{%
	\def\foofibsquniv##1##2##3##4##5{%
		\begin{cd}[3][#1]
			\& \&[-3ex] ##1 \arrow[dd,mapsto,"{#7}"]\\[-4.5ex]
			#2 \arrow[rru,bend left,"{##3}"]\arrow[r,"{#6}"] \arrow[dd,mapsto,"{#7}"'] \& #3\arrow[ru,dashed,"{##4}"] \arrow[dd,mapsto,"{#7}"] \\
			\& \& ##2 \\[-4.5ex]
			#4 \arrow[r,"{#8}"'] \& #5 \arrow[ru,"{##5}"']
			\end{cd}}%
	\foofibsquniv%
}
\begin{document}
				
\title{Indexed Grothendieck construction}
\author[E. Caviglia]{Elena Caviglia}
\address{School of Computing and Mathematical Sciences, University of Leicester, United Kingdom}
\email{ec363@leicester.ac.uk}
\author[L. Mesiti]{Luca Mesiti}
\address{School of Mathematics, University of Leeds, United Kingdom}
\email{mmlme@leeds.ac.uk}
%\copyrightyear{2023}
\keywords{Grothendieck construction, indexed, category of elements, fibration, presheaf, 2-category}
\subjclass[2020]{18D30, 18N10, 18F20, 18B25, 18A30}

\begin{abstract}
	We produce an indexed version of the Grothendieck construction. This gives an equivalence of categories between opfibrations over a fixed base in the 2-category of 2-copresheaves and 2-copresheaves on the Grothendieck construction of the fixed base. We also prove that this equivalence is pseudonatural in the base and that it restricts to discrete opfibrations with small fibres and copresheaves. Our result is a 2-dimensional generalization of the equivalence between slices of copresheaves and copresheaves on slices. We can think of the indexed Grothendieck construction as a simultaneous Grothendieck construction on every index that takes into account all bonds between different indexes.
\end{abstract}

\date{July 29, 2023}
\maketitle
				
\setcounter{tocdepth}{1}
\tableofcontents

\section{Introduction}

The Grothendieck construction, also known as the construction of the category of elements, is a fundamental tool in category theory. It establishes an equivalence between indexed categories and Grothendieck fibrations. The construction reorganizes the data of an indexed family of categories in a total category equipped with a functor that tells which index each object came from. It also abstractly captures the concept of change of base.

Originally introduced by Grothendieck in~\cite{grothendieck_revetsetalegroupefond} in a purely geometrical context, the Grothendieck construction has since found numerous applications in both logic and algebra. In algebra, it brought for example to the total category of all modules, collecting together $R$-modules for every ring $R$. While logicians have particularly used the restriction of the Grothendieck construction to the equivalence between families of sets indexed over a category and discrete fibrations (with small fibres). This allowed to consider such families of sets without mentioning morphisms that land into the universe of all sets. We believe that the general framework of the Grothendieck construction could yield new applications in logic, especially within the realm of 2-dimensional logic. For this purpose, the 2-category of elements, a natural extension of the Grothendieck construction, is particularly promising. This extension  has been studied in detail by the second author in~\cite{mesiti_twosetenrgrothconstrlaxnlimits}.

The equivalence between indexed categories and Grothendieck fibrations offers the advantages of both worlds. Additionally, the Grothendieck construction itself has significant and useful implications. Notably, it allows to conicalize all weighted $\Set$-enriched (i.e.\ ordinary) limits, and to almost conicalize weighted $2$-limits (see Street's~\cite{street_limitsindexedbycatvalued} and the second author's~\cite{mesiti_twosetenrgrothconstrlaxnlimits}). This also presents every presheaf as a colimit of representables and gives the famous explicit formula for the ordinary Kan extension.

In this paper, we produce an indexed version of the Grothendieck construction, that does not seem to appear in the literature. Our main results (\thex\ref{teormain} and \thex\ref{teorrestrictstodisc}) are condensed in the following theorem. Op and non-split variations are also considered in \remx\ref{remopvariations} and \remx\ref{remtwocatvariations}.

\begin{teor}\label{teornellintro}
	Let $\A$ be a small category and consider the functor $2$-category $\m{\A}{\CAT}$. For every $2$-functor $F\:\A\to \CAT$, there is an equivalence of categories
	$$\Fib[o][\m{\A}{\CAT}][F]\simeq \m{\Grothdiag{F}}{\CAT}$$
	between split opfibrations in the $2$-category $\m{\A}{\CAT}$ over $F$ and $2$-\pteor{co}presheaves on the Grothendieck construction $\Groth{F}$ of $F$.
	
	This restricts to an equivalence of categories
	$$\Fib+[b][\m{\A}{\CAT}][F]\simeq \m{\Grothdiag{F}}{\Set}$$
	between discrete opfibrations in $\m{\A}{\CAT}$ over $F$ with small fibres and $1$-copresheaves on $\Groth{F}$.
	
	Moreover, both the equivalences of categories above are pseudonatural in $F$.
\end{teor}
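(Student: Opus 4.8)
The strategy is to reduce the statement to the classical (one‑dimensional) Grothendieck construction, performed \emph{fibrewise} over $\A$ and then glued along the morphisms of $\A$. The first step is to make explicit what an opfibration over $F$ in the $2$‑category $\m{\A}{\CAT}$ is. Unwinding the internal notion, tested as usual on the $\CAT$‑enriched representables $\A(a,-)$ and their copowers (it suffices to test on these, since every object is a weighted colimit of representables and opfibrations in $\Cat$ are stable under weighted limits), a $2$‑natural transformation $p\colon G\Rightarrow F$ — equivalently, a family of functors $p_a\colon G(a)\to F(a)$ with $p_b\circ G(f)=F(f)\circ p_a$ for every $f\colon a\to b$ in $\A$ — is an opfibration over $F$ in $\m{\A}{\CAT}$ exactly when each $p_a$ is an opfibration of categories and each transition $G(f)$ preserves opcartesian morphisms; it is moreover discrete exactly when each $p_a$ is a discrete opfibration, the preservation condition being then automatic. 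Morphisms of such opfibrations (modifications over $F$) unwind to families of functors $G(a)\to G'(a)$ over $F(a)$ commuting with the transitions.

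The second step applies the classical equivalence $\Fib[o][\Cat][B]\simeq\m{B}{\CAT}$ to each fibre $B=F(a)$. An opfibration $p_a$ over $F(a)$ corresponds to a $2$‑functor $H_a\colon F(a)\to\CAT$, and the identity $p_b G(f)=F(f)p_a$, together with the fact that $G(f)$ preserves opcartesian arrows, makes the restrictions $H_a(x)\to H_b(F(f)x)$ natural in $x\in F(a)$: this is precisely a natural transformation $\lambda_f\colon H_a\Rightarrow H_b\circ F(f)$, and functoriality of $G$ in $f$ forces the cocycle identities $\lambda_{\mathrm{id}}=\mathrm{id}$ and $\lambda_{gf}=(\lambda_g\ast F(f))\circ\lambda_f$. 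This identifies $\Fib[o][\m{\A}{\CAT}][F]$, up to equivalence, with the category of coherent families $(H_a,\lambda_f)$ and their evident morphisms. Now a $2$‑functor $H\colon\Grothdiag{F}\to\CAT$ is exactly such a family — set $H_a\deq H\circ\iota_a$, with $\iota_a\colon F(a)\to\Grothdiag{F}$ the fibre inclusion, and $\lambda_f\deq H(f,\mathrm{id})$ — this being the universal property of the category of elements, which presents $\Grothdiag{F}$ as the appropriate oplax colimit of $F$ in $\Cat$ (cf.\ \cite{street_limitsindexedbycatvalued}). Composing the two identifications yields the equivalence $\Fib[o][\m{\A}{\CAT}][F]\simeq\m{\Grothdiag{F}}{\CAT}$; restricting to $\Set$‑valued families on each fibre, equivalently to fibrewise discrete opfibrations, gives $\Fib[b][\m{\A}{\CAT}][F]\simeq\m{\Grothdiag{F}}{\Set}$.

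For pseudonaturality in $F$, both sides are (contravariant) pseudofunctors on $\m{\A}{\CAT}$: the functor $\Fib[o][\m{\A}{\CAT}][-]$ via pullback of opfibrations, computed pointwise in $\m{\A}{\CAT}$, and $\m{\Grothdiag{-}}{\CAT}$ via precomposition along $\Grothdiag{\alpha}\colon\Grothdiag{F}\to\Grothdiag{F'}$, $(a,x)\mapsto(a,\alpha_a x)$, for a $1$‑cell $\alpha\colon F\Rightarrow F'$. The invertible comparison cell is supplied by the classical compatibility of the Grothendieck construction with change of base — pulling back an opfibration along a functor corresponds to precomposing its copresheaf — applied fibrewise: the copresheaf attached to $\alpha^{\ast}p'$ at $(a,x)$ is $H'_a(\alpha_a x)=H'(a,\alpha_a x)=\bigl((\Grothdiag{\alpha})^{\ast}H'\bigr)(a,x)$. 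Checking that these comparisons satisfy the coherence axioms of a pseudonatural transformation — compatibility with composites of $1$‑cells and with modifications — is then a diagram chase, facilitated by the fact that $\Grothdiag{-}$ is strictly $2$‑functorial.

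I expect the main obstacle to be twofold: first, correctly isolating which fibrewise data an internal opfibration in $\m{\A}{\CAT}$ encodes — in particular the role of the opcartesian‑preservation condition on the transition functors, which is exactly what promotes the fibrewise classifying functors $H_a$ into a single functor on $\Grothdiag{F}$ (drop it and the $\lambda_f$ fail to be natural); and second, the coherence bookkeeping, since one must handle naturality simultaneously in the fibre variable, in the index $a$, in the morphisms of $\A$, and in $F$, and arrange the constructions so that functoriality and pseudonaturality are manifest. Each individual step is analytically light; the effort lies in the organization.
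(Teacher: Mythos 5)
Your proposal follows essentially the same route as the paper: characterize opfibrations in $\m{\A}{\CAT}$ componentwise by testing on representables (the paper's Proposition~\ref{propcharactopfibacat}), apply the classical Grothendieck equivalence fibrewise using its pseudonaturality in the base category, and repackage via the presentation of $\Groth{F}$ as the oplax colimit of $F$ --- your ``coherent families $(H_a,\lambda_f)$'' with strict cocycle identities are exactly the paper's oplax cocones $\Delta 1\to\m{F(-)}{\Cat}$, and your change-of-base argument for pseudonaturality in $F$ is the paper's Proposition~\ref{propgrothconstrispseudonat}. The one point to sharpen is that, since both sides of the equivalence are strict (split opfibrations on one side, genuine $2$-functors into $\CAT$ on the other), the transition functors $G(f)$ must preserve the chosen cleavages rather than merely opcartesian arrows, so that the $\lambda_f$ satisfy the cocycle identities on the nose and not just up to isomorphism.
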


When $\A=\1$, we recover the usual Grothendieck construction. Indeed $\m{\A}{\CAT}$ reduces to $\CAT$, a $2$-functor $F\:\1\to \CAT$ is just a small category $\C$ and $\Groth{F}=\C$. So we find
$$\Fib[o][n][\C]\simeq \m{\C}{\CAT}.$$
But we introduce an indexed version of the Grothendieck construction that allows $\A$ to be an arbitrary small category and $F\:\A\to \CAT$ to be an arbitrary $2$-functor. Interestingly, the data of such general opfibrations in $\m{\A}{\CAT}$ over $F$ are still packed in a $\CAT$-valued copresheaf, now on the Grothendieck construction of $F$.

We can think of the indexed Grothendieck construction as a simultaneous Grothendieck construction on every index $A\in \A$, taking into account the bonds between different indexes. Indeed, an opfibration $\phi$ in $\m{\A}{\CAT}$ is, in particular, a natural transformation such that every component $\phi_A$ is a Grothendieck opfibration (in $\CAT$). Our construction essentially applies the quasi-inverse of the usual Grothendieck construction to every component of $\phi$ at the same time. All the obtained copresheaves in $\CAT$ are then collected into a single total copresheaf, exploiting the usual Grothendieck construction on $F$.

The restricted equivalence of categories
$$\Fib+[b][\m{\A}{\CAT}][F]\simeq \m{\Grothdiag{F}}{\Set}$$
further reduces, when $F\:\A\to \Set$, to the well known equivalence
$$\slice{\m{\A}{\Set}}{F}\simeq \m{\Grothdiag{F}}{\Set}.$$
When $F$ is a representable $\y{A}\:\A\to \Set$, this is the famous equivalence
$$\slice{\m{\A}{\Set}}{\y{A}}\simeq \m{\slice{\A}{A}}{\Set}$$
between slices of (co)presheaves and (co)presheaves on slices. Our theorem also guarantees its pseudonaturality in $A$, which does not seem to be stated in the literature.

The last equivalence between slices of (co)presheaves and (co)presheaves on slices had many applications in geometry and logic. In particular, it is the archetypal case of the fundamental theorem of elementary topos theory, showing that every slice of a Grothendieck topos is a Grothendieck topos. Our equivalence
$$\Fib[o][\m{\A}{\CAT}][F]\simeq \m{\Grothdiag{F}}{\CAT}$$
gives a $2$-dimensional generalization of this, and we thus expect it to be very fruitful. Indeed, the concept of (op)fibrational slice has recently been proposed as the correct upgrade of slices to dimension 2. Rather than taking all maps into a fixed element, we restrict to the (op)fibrations over that element. This idea appears in Ahrens, North and van der Weide's~\cite{ahrensnorthvanderweide_bicategoricaltypetheory}, where it is attributed to Shulman. Our equivalence can be thought of as saying that every (op)fibrational slice of a Grothendieck $2$-topos is again a Grothendieck $2$-topos.

Our motivating application of the indexed Grothendieck construction is to produce a nice candidate for a 2-classifier in the 2-category of 2-presheaves, in line with Hofmann and Streicher's~\cite{hofmannstreicher_liftinggrothuniverses}. We describe this in \exax\ref{exhofstreichuniv}. The second author has shown in the following paper~\cite{mesiti_twoclassifiersdensegenstacks} that such candidate is indeed a 2-dimensional classifier in $\m{\A\op}{\CAT}$, towards a $2$-dimensional elementary topos structure on $\m{\A\op}{\CAT}$. A 2-classifier, which is a generalization of the concept of subobject classifier to dimension 2, proposed by Weber in~\cite{weber_yonfromtwotop}, can also be thought of as a Grothendieck construction inside a 2-category. So it is natural to expect an indexed version of the Grothendieck construction to give a 2-classifier in the 2-category of 2-presheaves.

The strategy to prove our main theorem will be to use that the Grothendieck construction of a $2$-functor $F\:\A\to \CAT$ is equivalently the oplax colimit of $F$. So that we will be able to apply the usual Grothendieck construction on every index $A\in \A$. We will then show that all the opfibrations produced for each $A$ can be collected into an opfibration in $\m{\A}{\CAT}$ over $F$. For this, we will also need to prove that the usual Grothendieck construction is pseudonatural in the base category. The chain of abstract processes above will then be very useful to conclude the pseudonaturality in $F$ of the indexed Grothendieck construction. 

We will also give an explicit description of the indexed Grothendieck construction, in \conx\ref{consindexedgroth}, so that it can be applied more easily.

We will conclude showing some interesting examples, choosing particular $\A$'s and $F$'s in \thex\ref{teornellintro}. Among them, we will consider the cases $\A=\2$ (arrows between opfibrations) and $\A=\Delta$ (cosimplicial categories).

\subsection*{Outline of the paper}

In Section~\ref{sectionpropertiesgrothconstr}, we recall that the Grothendieck construction can be equivalently expressed as an oplax colimit and as a lax comma object. We prove that the equivalence of categories given by the Grothendieck construction is pseudonatural in the base category.

In Section~\ref{sectionopfibintwopresh}, after recalling the notion of opfibration in a $2$-category, we show an equivalent characterization of the opfibrations in $\m{\A}{\CAT}$. This also allows us to define {having small fibres} for a discrete opfibration in $\m{\A}{\Cat}$. We produce a pseudofunctor $F\mto \Fib[o][\m{\A}{\CAT}][F]$.\v[0.2]

In Section~\ref{sectionindexedgrothconstr}, we present our main theorems, proving an equivalence of categories between (split) opfibrations in $\m{\A}{\CAT}$ over $F$ and $2$-copresheaves on $\Groth{F}$. We also show that such equivalence is pseudonatural in $F$. We present the explicit indexed Grothendieck construction.

In Section~\ref{sectionexamples}, we show some interesting examples. In particular, we obtain a nice candidate for a Hofmann--Streicher universe in 2-presheaves.

\subsection*{Notation}

We fix Grothendieck universes $\U$, $\V$ and $\W$ such that $\U\in \V\in \W$. We denote as $\Set$ the category of $\U$-small sets, as $\Cat$ the 2-category of $\V$-small categories (i.e.\ categories such that both the collections of their objects and of their morphisms are $\V$-small) and as $\CATlarge$ the 2-category of $\W$-small categories. So that $\Set\in \Cat$ and the underlying category $\Cat_0$ of $\Cat$ is in $\CATlarge$. \predfn{Small category} will mean $\V$-small category. \predfn{Small fibres}, for a discrete opfibration in $\Cat$, will mean $\U$-small fibres. \predfn{2-category} will mean a $\W$-small $\Cat$-enriched category. \predfn{Small 2-category} will mean $\V$-small 2-category.

\section{Some properties of the Grothendieck construction}\label{sectionpropertiesgrothconstr}

In this section, we recall two useful equivalent characterizations of the usual Grothendieck construction. We then prove that the equivalence of categories given by the Grothendieck construction is pseudonatural in the base category. 

It is known that the Grothendieck construction $\Groth{F}$ of a 2-functor $F\:\C\to \CAT$ with $\C$ a category is equivalently the oplax colimit of $F$. As we could not find a proof of this in the literature, we show a proof below (\thex\ref{teorgrothisoplaxcolim}). Another useful characterization is that $\Groth{F}$ is equivalently the lax comma object from $\1\:\1\to \CAT$ to $F$ in $\twoCATlax$. This has been explored in detail by the second author in~\cite{mesiti_twosetenrgrothconstrlaxnlimits} and is recalled below (\thex\ref{teorgrothconstrislaxcomma}).

\begin{rem}
	In this paper, we focus on the Grothendieck construction of (strict) $2$-functors $F\:\C\to \CAT$ with $\C$ a small category, which correspond with split opfibrations over $\C$. We will consider variations of this setting in \remx\ref{remopvariations} (fibrations) and \remx\ref{remtwocatvariations} ($\C$ a $2$-category and non-split opfibrations).
\end{rem}

We first recall some basic definitions (\defx\ref{recgrothopfib} and \conx\ref{recgrothconstr}) from Jacobs's book~\cite{jacobs_catlogicandtypetheory}. Such concepts have been introduced by Grothendieck in~\cite{grothendieck_revetsetalegroupefond}.

\begin{defne}[\cite{grothendieck_revetsetalegroupefond}]\label{recgrothopfib}
	A functor $p\:\E\to \C$ is called a \dfn{\pteor{Grothendieck} opfibration} (in $\CAT$) (over $\C$) if for every object $E\in \E$ and every morphism $f\:p(E)\to C$ in $\C$, there exists an opcartesian lifting $\ov{f}^E\:E\to f\stb E$ of $f$ to $E$
	\fibsq{E}{f\stb E}{p(E)}{C}{\ov{f}^E}{p}{f}
	\dfn{Opcartesian} means that for every $E'\in \E$, every morphism $w\:C\to p(E')$ in $\C$ and every morphism $e\:E\to E'$ in $\E$ such that $p(e)=w\c f$, there exists a unique morphism $v\:f\stb E\to E'$ such that $p(v)=w$ and $v\c \ov{f}^E = e$.
	\fibsquniv{E}{f\stb E}{p(E)}{C}{\ov{f}^E}{p}{f}{E'}{p(E')}{e}{\exists ! \h v}{w}
	We call \dfn{cleavage} a choice of cartesian liftings $\ov{f}^E$ for every $f$ and $E$. An opfibration with a cleavage is called \dfn{split} if the cleavage is functorial in the following sense:
	\begin{enum}
		\item for every $E\in \E$ we have $\ov{\id{}}^E=\id{E}$;
		\item for every $E\in \E$ and morphisms $f\:p(E)\to C$ and $g\:C\to C'$ in $\C$ we have
		$$\ov{g}^{f\stb E}\c\ov{f}^E=\ov{(g\c f)}^E.$$
	\end{enum}
	 A \dfn{cleavage preserving morphism} between split opfibrations $p\:\E\to \C$ and $q\:\F\to \D$ is a commutative square in $\CAT$
	 \sq[n][5.5][6.5]{\E}{\F}{\C}{\D}{H}{p}{q}{K}
	 such that for every $E\in \E$ and every $f\:p(E)\to C$ in $\C$ we have $H(\ov{f^E})=\ov{K(f)}^{H(E)}$.
	 \begin{eqD*}
	 	\fibsqN{H(E)}{H(f\stb E)}{q(H(E))}{q(H(f\stb E))}{H(\ov{f}^E)}{q}{q(H(\ov{f}^E))}\quad \quad \quad
	 	\fibsqN{H(E)}{K(f)\stb H(E)}{K(p(E))}{K(C)}{\ov{K(f)}^{H(E)}}{q}{K(f)}
	 \end{eqD*}
	 If we restrict the attention to split opfibrations over a fixed $\C$, we require cleavage preserving morphisms $(H,K)$ to have $K=\Id{\C}$.
	 
	Split opfibrations over $\C$ and cleavage preserving morphisms form a category $\Fib[o][n][\C]$.
	
	A functor $p\:\E\to \C$ is called a \dfn{discrete opfibration} (in \CAT) (over \C) if for every object $E\in \E$ and every morphism $f\:p(E)\to C$ in $\C$, there exists a unique lifting $\ov{f}^E\:E\to f\stb E$ of $f$ to $E$. In particular, $\ov{f}^E$ is cartesian and $p$ is a Grothendieck opfibration. Cleavage preserving morphisms are just commutative squares in $\CAT$ and we obtain a category $\Fib[b][n][\C]$ of discrete opfibrations over $\C$. We denote as $\Fib+[b][n][\C]$ its full subcategory on the discrete opfibrations with small fibres.
\end{defne}

\begin{rem}
	The pullback $H\st p$ of a split opfibration $p\:\E\to \C$ along $H\:\D\to \C$ is a split opfibration. We can choose the cleavage of $H\st p$ to make the universal square that exhibits the pullback into a cleavage preserving morphism.
\end{rem}

\begin{cons}[\cite{grothendieck_revetsetalegroupefond}]\label{recgrothconstr}
	Let $\C$ be a small category and let $F\:\C\to \CAT$ be a $2$-functor. We can think of $F$ as a family of categories indexed over a category, taking into account any bond between different indexes. The Grothendieck construction is a process of reorganization of these data in terms of a single total category equipped with a projection functor that tells which index each object came from. The categories of the family are then recovered by taking the fibres of this projection functor. This process is based on the idea of taking the disjoint union of the categories of the family, but it also applies a change of scalars operation to handle the bonds between different indexes.
	
	The \dfn{Grothendieck construction} of $F$ is the functor $\groth{F}\:\Groth{F}\to \C$ of projection on the first component from the category $\Groth{F}$, which is defined as follows:
		\begin{description}
			\item[an object of $\Groth{F}$] is a pair $(C,X)$ with $C\in \C$ and $X\in F(C)$;
			\item[a morphism $(C,X)\to (D,X')$ in $\Groth{F}$] is a pair $(f,\alpha)$ with $f\:C\to D$ a morphism in $\C$ and $\alpha\:F(f)(X)\to X'$ a morphism in $F(D)$.
		\end{description}
	$\groth{F}\:\Groth{F}\to \C$ is a split opfibration, with cleavage given by the morphisms $(f,\id{})$.
\end{cons}

\begin{rem}\label{remfactorizationcartvert}
	Notice that every morphism $(f,\alpha)\:(C,X)\to (D,X')$ in $\Groth{F}$ can be factorized as
	$$(C,X)\ar{(f,\id{})}(D,F(f)(X))\ar{(\id{},\alpha)}(D,X')$$
	That is, as a cartesian morphism of the cleavage followed by a morphism which is over the identity (also called vertical morphism).
	
	This means that the Grothendieck construction of $F$ is somehow given by collecting all $F(C)$ together, where we have the morphisms $(\id{},\alpha)$, and adding the morphisms $(f,\id{})$ to handle change of index. This idea will be made precise in \thex\ref{teorgrothisoplaxcolim}.
\end{rem}

The following fundamental theorem is due to Grothendieck~\cite{grothendieck_revetsetalegroupefond} (see also Borceux's~\cite{borceux_handbookcatalgtwo}). 

\begin{teor}[\cite{grothendieck_revetsetalegroupefond}]\label{usualgrothconstr}
	The Grothendieck construction extends to an equivalence of categories
	$$\groth{-}\:\m{\C}{\CAT}\aequi \Fib[o][n][\C]$$
	Given a natural transformation $\gamma\:F\aR{}G\:\C\to \CAT$, the functor $\groth{\gamma}\:\Groth{F}\to \Groth{G}$ is defined to send $(C,X)$ to $(C,\gamma(X))$ and $(f,\alpha)\:(C,X)\to (D,X')$ to $(f,\gamma_D(\alpha))$.
	
	The quasi-inverse is given by taking fibres on every $C\in \C$.
	
	Moreover, the equivalence above restricts to an equivalence of categories
	$$\groth{-}\:\m{\C}{\Set}\aequi \Fib+[b][n][\C]$$
\end{teor}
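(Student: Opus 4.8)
The plan is to construct an explicit quasi-inverse to $\groth{-}$ and check that the two composites are naturally isomorphic to the identities; all the verifications come down to routine uses of the cartesian universal property and of the split axioms of \recx\ref{recgrothopfib}, so I only indicate their shape.

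First I would check that $\groth{-}$ really lands in $\Fib[o][n][\C]$ and is functorial. That each $\groth{F}$ is a split opfibration over $\C$ is \recx\ref{recgrothconstr}. For a natural transformation $\gamma\colon F\aR{}G$, the functor $\groth{\gamma}$ commutes with the projections to $\C$ by definition, and it sends each chosen cartesian lift $(f,\id{})$ of $\groth{F}$ to the corresponding chosen cartesian lift of $\groth{G}$ (by naturality of $\gamma$ and functoriality of $\gamma_D$), so it is a cleavage preserving morphism over $\C$, i.e.\ one with $K=\Id{\C}$. Preservation of identities and composition of natural transformations is immediate from the formula for $\groth{\gamma}$.

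Next I would build the quasi-inverse $\Phi$. On objects: given a split opfibration $p\colon\E\to\C$, set $\Phi(p)(C)\deq\E_C$, the fibre over $C$ (objects over $C$ and morphisms over $\id{C}$); on a morphism $f\colon C\to D$ of $\C$, let $\Phi(p)(f)\colon\E_C\to\E_D$ send $E$ to $f\stb E$ and a vertical $e\colon E\to E'$ to the unique vertical $v\colon f\stb E\to f\stb E'$ with $v\c\ov{f}^E=\ov{f}^{E'}\c e$, which exists by the cartesian property of $\ov{f}^{E}$. The two split axioms of \recx\ref{recgrothopfib} say precisely that $\Phi(p)(\id{C})=\Id{\E_C}$ and $\Phi(p)(g)\c\Phi(p)(f)=\Phi(p)(g\c f)$, so $\Phi(p)$ is a strict $2$-functor; this is the one place where splitness is genuinely used, and it is what lets $\Phi$ land in $\m{\C}{\CAT}$ rather than among mere pseudofunctors. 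On a cleavage preserving morphism $(H,\Id{\C})$, restricting $H$ to the fibres gives functors $\E_C\to\F_C$, and naturality in $C$ of this family is exactly the cleavage preservation condition $H(\ov{f}^E)=\ov{f}^{H(E)}$.

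Then I would produce the two natural isomorphisms witnessing that $\Phi\c\groth{-}$ and $\groth{-}\c\Phi$ are isomorphic to the respective identity functors. For the first, the fibre of $\groth{F}$ over $C$ is isomorphic to $F(C)$ via $(C,X)\leftrightarrow X$ and $(\id{},\alpha)\leftrightarrow\alpha$, and under this identification $f\stb(C,X)=(D,F(f)(X))$, so $\Phi(\groth{F})(f)$ is identified with $F(f)$; one checks this family of isomorphisms is natural in $F$. For the second, by \remx\ref{remfactorizationcartvert} every morphism $e$ of $\E$ factors uniquely as $\ov{f}^E$ followed by a vertical morphism, with $f=p(e)$, which yields an isomorphism $\Groth{\Phi(p)}\cong\E$ over $\C$ sending $(C,E)$ to $E$ and a morphism $(f,v)$, with $v$ vertical, to $v\c\ov{f}^E$; this isomorphism is cleavage preserving and natural in $p$. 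Hence $\groth{-}$ is an equivalence of categories. Finally, a $2$-functor $F\colon\C\to\CAT$ takes values in discrete categories iff the only vertical morphisms of $\Groth{F}$ are identities iff the cartesian lifts in $\groth{F}$ are unique iff $\groth{F}$ is a discrete opfibration; and between discrete opfibrations over $\C$ the cleavage preservation condition is automatic, so morphisms are just commutative triangles over $\C$, matching natural transformations of $\Set$-valued functors. Thus the equivalence restricts to $\m{\C}{\Set}\aequi\Fib[b][n][\C]$. I expect no genuine obstacle, the theorem being classical; the point deserving real care is the strict $2$-functoriality of the fibre construction $\Phi(p)$, which rests essentially on the split axioms, together with the simultaneous check that all comparison isomorphisms are cleavage preserving and natural.
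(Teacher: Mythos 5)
Your proposal is correct and follows exactly the route the paper intends: the paper does not prove Theorem~\ref{usualgrothconstr} itself but cites it as classical (Grothendieck, Borceux) and describes the quasi-inverse precisely as ``taking fibres on every $C\in\C$'', which is the fibre-functor construction $\Phi$ you carry out, with the split axioms giving strict functoriality and the cartesian/vertical factorization of \remx\ref{remfactorizationcartvert} giving the isomorphism $\Groth{\Phi(p)}\cong\E$. No gaps; your emphasis on splitness being what makes $\Phi(p)$ a strict functor, and on checking that the comparison isomorphisms are cleavage preserving, is exactly where the care is needed.
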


Aiming at proving that the Grothendieck construction is equivalently given by an oplax colimit, we recall the definition of oplax colimit.

\begin{defne}\label{defoplaxcolim}
	Let $F\:\C\to \D$ be a $2$-functor with $\C$ small. The \dfn{oplax \pteor{conical} colimit of $F$}, denoted as $\oplaxcolim{F}$, is (if it exists) an object $K\in \D$ together with an isomorphism of categories
	$$\HomC{\D}{K}{U}\iso \HomC{\moplax{\C\op}{\CAT}}{\Delta 1}{\HomC{\D}{F(-)}{U}}$$
	$2$-natural in $U\in \D$, where $\moplax{\C\op}{\CAT}$ is the $2$-category of $2$-functors, oplax natural transformations and modifications from $\C\op$ to $\CAT$. $\Delta 1$ is the functor which is constant at singleton category $\1$ and the right hand side of the isomorphism above should be thought as the category of oplax cocones on $F$ with vertex $U$. Indeed we also have an isomorphism
	$$\HomC{\moplax{\C\op}{\CAT}}{\Delta 1}{\HomC{\D}{F(-)}{U}}\iso \HomC{\mlax{\C}{\D}}{F}{\Delta U}$$
	$2$-natural in $U$, where $\mlax{\C}{\D}$\v[0.5] is the $2$-category of $2$-functors, lax natural transformations and modifications, and $\Delta U$ is the functor which is constant at $U$.
\end{defne}

\begin{rem}\label{remoplaxcolimits}
	When $\oplaxcolim{F}$ exists, taking $U=K$ and considering the identity on $K$ gives us in particular a lax natural transformation
	$$\lambda\:F\alax{}\Delta K$$
	which is called the \dfn{universal oplax cocone} on $F$.
	
	An equivalent way to show that $K=\oplaxcolim{F}$ is to exhibit such a lax natural transformation $\lambda$ that is universal in the following $2$-categorical sense:
	\begin{enum}
		\item for every lax natural transformation $\sigma\:F\alax{}\Delta U$, there exists a unique\v[-0.2] morphism $s\:K\to U$ in $\D$ such that $\Delta s \c \lambda=\sigma$;
		\item for every $s,t\:K\to U$ in $\D$ and every modification $\Xi\:\Delta s\c \lambda\aM{} \Delta t\c \lambda$, there exists a unique $2$-cell $\chi\:s\aR{}t$ in $\D$ such that $\Delta \chi\star \lambda=\Xi$.
	\end{enum}
\end{rem}

We will need the following known characterization of the Grothendieck construction. As we could not find a proof of this in the literature, we show a proof here.

\begin{teor}\label{teorgrothisoplaxcolim}
	Let $\C$ be a small category and let $F\:\C\to \CAT$ be a $2$-functor. The Grothendieck construction $\Groth{F}$ of $F$ is equivalently the oplax (conical) colimit of the $2$-diagram $F$:
	$$\Grothdiag{F}=\oplaxcolim{F}$$
\end{teor}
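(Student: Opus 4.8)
The plan is to verify directly the $2$-categorical universal property recorded in \remx\ref{remoplaxcolimits}: I will exhibit a lax natural transformation $\lambda\:F\alax{}\Delta\Grothdiag{F}$ and check that it is universal among lax natural transformations from $F$ into constant functors (equivalently, among oplax cocones on $F$). One could alternatively deduce the statement from the lax comma characterization \thex\ref{teorgrothconstrislaxcomma}, but the direct route is cleaner and self-contained, so I will follow it.

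First I would introduce the candidate universal oplax cocone. For $C\in\C$, the component $\lambda_C\:F(C)\to\Grothdiag{F}$ sends an object $X$ to $(C,X)$ and a morphism $\beta\:X\to X'$ in $F(C)$ to $(\id{C},\beta)$; for $f\:C\to D$ in $\C$, the structure $2$-cell of $\lambda$ at $f$ has component at $X$ the cartesian morphism $(f,\id{})\:(C,X)\to(D,F(f)(X))$ of the canonical split cleavage of \recx\ref{recgrothconstr}. Naturality of this $2$-cell in $X$ is immediate from functoriality of $F(f)$, and the two lax naturality axioms — unitality at $\id{C}$ and compatibility with composition at $g\c f$ — are exactly the identities $\ov{\id{}}^{\,X}=\id{}$ and $\ov{g}^{\,F(f)(X)}\c\ov{f}^{\,X}=\ov{g\c f}^{\,X}$ expressing that the cleavage is split; this is essentially the content of \remx\ref{remfactorizationcartvert}.

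Next I would check the $1$-dimensional universal property. Given a lax natural transformation $\sigma\:F\alax{}\Delta U$, with components $\sigma_C\:F(C)\to U$ and structure $2$-cells $\sigma_f\:\sigma_C\aR{}\sigma_D\c F(f)$, the equation $\Delta s\c\lambda=\sigma$ forces $s(C,X)=\sigma_C(X)$ on objects; and, factoring a general morphism as $(f,\alpha)=(\id{D},\alpha)\c(f,\id{})$ as in \remx\ref{remfactorizationcartvert}, it forces $s(f,\alpha)=\sigma_D(\alpha)\c(\sigma_f)_X$ on morphisms. Taking these formulas as the definition of $s$, I would verify that $s$ preserves identities and composites — this is precisely where the lax naturality axioms of $\sigma$ and functoriality of the $\sigma_C$ enter — and that the resulting functor really does satisfy $\Delta s\c\lambda=\sigma$ on both $1$-cells and $2$-cells. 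The $2$-dimensional universal property is handled the same way: given $s,t\:\Grothdiag{F}\to U$ and a modification $\Xi\:\Delta s\c\lambda\aM{}\Delta t\c\lambda$ with components the natural transformations $\Xi_C\:s\c\lambda_C\aR{}t\c\lambda_C$, the equation $\Delta\chi\star\lambda=\Xi$ forces $\chi_{(C,X)}=(\Xi_C)_X$ (hence uniqueness), and naturality of $\chi$ in $(C,X)$ follows by splitting a morphism $(f,\alpha)$ into its vertical part $(\id{D},\alpha)$, controlled by naturality of $\Xi_D$, and its cartesian part $(f,\id{})$, controlled by the modification axiom relating $\Xi_C$, $\Xi_D$ and the structure $2$-cells of $\Delta s\c\lambda$ and $\Delta t\c\lambda$.

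I do not expect a genuine obstacle: the whole argument is forced by the cartesian–vertical factorization of \remx\ref{remfactorizationcartvert}, and the only real work is the coherence bookkeeping — matching the lax naturality axioms of $\sigma$ with functoriality of $s$, and the modification axiom with naturality of $\chi$, while keeping the variances straight (lax versus oplax, and the direction of the structure $2$-cells). That last point is the one I would be most careful about while writing out the details.
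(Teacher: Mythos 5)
Your proposal is correct and follows essentially the same route as the paper: the paper's proof also exhibits the cocone $\opn{inc}$ with components $X\mapsto(C,X)$, $\alpha\mapsto(\id{},\alpha)$ and structure $2$-cells $(f,\id{})$, and then derives the forced formulas $s(C,X)=\sigma_C(X)$, $s(f,\alpha)=\sigma_D(\alpha)\c(\sigma_f)_X$ and $\chi_{(C,X)}=\Xi_{C,X}$ from the cartesian--vertical factorization. The only cosmetic difference is that you spell out the lax-naturality/splitness bookkeeping slightly more explicitly than the paper does.
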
 
\begin{proof}
	Following \remx\ref{remoplaxcolimits}, we produce a lax natural transformation $\opn{inc}\:F\alax{}\Delta \Groth{F}$ and prove that it is universal in the $2$-categorical sense. For every $C\in \C$ we define the component of $\opn{inc}$ on $C$ to be the functor
	\begin{fun}
		\opn{inc}_{\h[1]C} & \: & F(C) & \too & \Groth{F} \\[0.65ex]
		&& \fibdiag{X}{\alpha}{X'} & \mto & \h[-3.5]\fibdiag{(C,X)}{(\id{},\alpha)}{(C,X')}
	\end{fun}
	For every morphism $f\:C\to D$ in $\C$, we define the structure $2$-cell of $\opn{inc}$ on $f$ to be the natural transformation
	\begin{cd}[5][7]
		F(C) \arrow[d,"{F(f)}"'] \arrow[r,"{\opn{inc}_{\h[1]C}}"]\arrow[d,Rightarrow,shift left=5.3ex,"{\opn{inc}_f}"{pos=0.43},shorten >=0.6ex]\& \Groth{F}\\
		F(D)\arrow[ru,bend right,"{\opn{inc}_{\h[1]D}}"'{inner sep=0.35ex}]
	\end{cd}
	that has components ${(\opn{inc}_f)}_X=(f,\id{})$ for every $X\in F(C)$. The naturality of $\opn{inc}_f$ expresses 
	$$(f,\id{})\c (\id{},\alpha)=(\id{},F(f)(\alpha))\c (f,\id{}).$$
	As explained with more detail in the second author's~\cite{mesiti_twosetenrgrothconstrlaxnlimits}, to get the whole $\Groth{F}$ we just need the two kinds of morphisms $(\id{},\alpha)$ and $(f,\id{})$ as building blocks. This is what will ensure the universality of $\opn{inc}$. Composition of morphisms of type $(\id{},\alpha)$ corresponds with the functoriality of $\opn{inc}_{\h[1]C}$. While composition of morphisms of type $(f,\id{})$ corresponds with the lax naturality of $\opn{inc}$. We could then define general morphisms to be formal composites $(\id{},\alpha)\c (f,\id{})$, following the factorization of morphisms in $\Groth{F}$ described in \remx\ref{remfactorizationcartvert}. And the equation above, that swaps the two kinds of morphisms, tells how to reduce every composition to this form.
	
	We prove that $\opn{inc}$ is universal in the $2$-categorical sense. Given a lax natural transformation $\sigma\:F\alax{}\Delta U$, we show that there exists a unique $s\:\Groth{F}\to U$ such that $\Delta s\c \opn{inc}=\sigma$. These conditions impose to define for every $(f,\alpha)\:(C,X)\to (D,X')$ in $\Groth{F}$
	$$s(C,X)=\left(s\c \opn{inc}_{\h[1]C}\right)(X)=\sigma_C(X)$$
	$$s(\id{D},\alpha)=\left(s\c \opn{inc}_{\h[1]D}\right)(\alpha)=\sigma_D(\alpha)$$
	$$s(f,\id{})=s\left({(\opn{inc}_f)}_X\right)={(\sigma_f)}_X$$
	So by the factorization described in \remx\ref{remfactorizationcartvert}, we need to define
	$$s(f,\alpha)=s(\id{},\alpha)\c s(f,\id{})=\sigma_D(\alpha)\c {(\sigma_f)}_X.$$
	$s$ is a functor by naturality of $\sigma_g$, functoriality of $\sigma_E$ and lax naturality of $\sigma$. And $\Delta s\c \opn{inc}= \sigma$ by construction.
	
	Take now $s,t\:\Groth{F}\to U$ and a modification $\Xi\:\Delta s \c \opn{inc}\aM{}\Delta t\c \opn{inc}$. $\Xi$ has as components on $C$ natural transformations $\Xi_C\:s\c \opn{inc}_{\h[1]C}\aR{}t\c \opn{inc}_{\h[1]C}$. We show that there exists a unique natural transformation $\chi\:s\aR{}t$ such that $\Delta \chi\star \opn{inc}=\Xi$. We need to define
	$$\chi_{(C,X)}={\left(\chi\star \opn{inc}_{\h[1]C}\right)}_X=\Xi_{C,X}$$
	and this works. So $\opn{inc}$ is universal.
\end{proof}

\begin{rem}\label{remoplaxcoliminCAT}
	Let $\C$ be a small category and let $F\:\C\to \Cat$ be a 2-functor. $\Groth{F}$ is also the oplax (conical) colimit, with respect to the enrichment over $\CATlarge$, of the 2-diagram $\C\ar{F}\Cat \ito \CATlarge$. Indeed the Grothendieck construction of the latter composite is clearly just $\Groth{F}$.
\end{rem}

We also need the following result from the second author's~\cite{mesiti_twosetenrgrothconstrlaxnlimits}.

\begin{teor}[\cite{mesiti_twosetenrgrothconstrlaxnlimits}]\label{teorgrothconstrislaxcomma}
	Let $\C$ be a category and $F\:\C\to \CAT$ be a $2$-functor. The Grothendieck construction $\Groth{F}$ of $F$ is equivalently given by the lax comma object
	\begin{eqD*}
		\sq*[l][6][6][\h[-1.9]\lax \opn{comma}][2.7][2.2][0.55]{\Groth{F}}{\1}{\C}{\CAT}{}{\groth{F}}{\1}{F}
	\end{eqD*}
	in $\twoCATlax$ \pteor{the lax $3$-category of $2$-categories, $2$-functors, lax natural transformations and modifications}.
	
	As a consequence, it is then also given by the strict $3$-pullback in $\twoCATlax$ between $F$ and the replacement $\tau$ of $\1\:\1\to \CAT$ obtained by taking the lax comma object of $\1\:\1\to \CAT$ along the identity of $\CAT$ \pteor{that is a lax $3$-dimensional version of the lax limit of the arrow $\1\:\1\to \CAT$}:
	\begin{eqD*}
		\begin{cd}*[5.85][5]
			\Groth{F}\PB{rd} \arrow[d,"\groth{F}"'] \arrow[r] \& \CAT\bl \arrow[d,"{\tau}"'] \arrow[r]\& \1 \arrow[d,"{\1}"]\arrow[ld,Rightarrow,shorten <=2.7ex,shorten >=2.2ex,"{\h[-2.3]\lax \opn{comma}}"{pos=0.57}] \\
			\B\arrow[r,"F"'] \& \CAT \arrow[r,equal]\& \CAT
		\end{cd}
	\end{eqD*}
	The domain of $\tau$ is a lax pointed version of $\CAT$, whence the notation $\CAT\bl$.
\end{teor}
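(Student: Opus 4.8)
The plan is to prove the two assertions in order: that $\Groth{F}$, together with the projection $\groth{F}$, realises the lax comma object in $\twoCATlax$, and then — as a formal consequence of the way comma objects decompose — that it is the displayed strict $3$-pullback. For the first assertion I would unwind the universal property of the lax comma object applied to the cospan $\1\colon\1\to\CAT$ and $F\colon\C\to\CAT$ and compare it directly with the explicit description of $\Groth{F}$ in \recx\ref{recgrothconstr}. An object of the lax comma object is a triple consisting of an object of $\1$ (hence no data), an object $C\in\C$, and a $2$-functor $\1\to F(C)$ (the value at $\ast$ of $\1\colon\1\to\CAT$ being the terminal category), i.e.\ an object $X\in F(C)$; so its objects are the pairs $(C,X)$. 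A morphism $(C,X)\to(D,X')$ consists of $f\colon C\to D$ in $\C$ together with a $2$-cell in $\CAT$ comparing the composites $\1\to F(C)\to F(D)$ and $\1\to F(D)$; with the orientation of the lax comma fixed by the displayed square this $2$-cell is exactly a morphism $\alpha\colon F(f)(X)\to X'$ in $F(D)$, so morphisms are the pairs $(f,\alpha)$ of \recx\ref{recgrothconstr}, and the composition law, governed on the level of these fillers by lax naturality, is the one of $\Groth{F}$. A $2$-cell in the lax comma object would require a $2$-cell in $\C$, of which there are none beyond identities since $\C$ is a $1$-category; hence the lax comma object is a $1$-category and it coincides on the nose with $\Groth{F}$, with left leg $\groth{F}$, with leg to $\1$ the unique functor, and with universal lax $2$-cell having components $X\colon\1\to F(C)$ and fillers $\id{F(f)(X)}$. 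To close this part I would verify that the canonical comparison from the hom-category of maps out of an arbitrary test $2$-category into this lax cocone to maps into $\Groth{F}$ is an isomorphism; this is a routine rewriting that uses the presentation of morphisms of $\Groth{F}$ by the two building blocks $(\id{},\alpha)$ and $(f,\id{})$ and the factorization of \remx\ref{remfactorizationcartvert}, exactly as in the proof of \thex\ref{teorgrothisoplaxcolim}.

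For the second assertion I would use that a lax comma object of a cospan is computed as an ordinary strict pullback of one of the two legs against the replacement of the other leg obtained as its lax comma object with the identity. Taking the leg $\1\colon\1\to\CAT$, this replacement is by definition the lax comma object of $\1$ along $\Id{\CAT}$; its objects are the pairs $(\D,X)$ with $\D\in\CAT$ and $X\in\D$, which is the lax pointed version $\CAT\bl$ of $\CAT$, and the projection to the $\Id{\CAT}$-side is $\tau\colon\CAT\bl\to\CAT$, $(\D,X)\mapsto\D$. By the first part $\Groth{F}$ is the lax comma object of $\1$ and $F$, so it is the pullback of $\tau$ along $F$: an object is $C\in\C$ together with $(\D,X)$ such that $\D=F(C)$, i.e.\ just $(C,X)$ with $X\in F(C)$; a morphism is $f\colon C\to D$ together with a morphism $(F(C),X)\to(F(D),X')$ of $\CAT\bl$ over $F(f)$, i.e.\ a morphism $\alpha\colon F(f)(X)\to X'$; and likewise at the higher levels, the identifications being forced. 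Since at each dimension (objects, morphisms, $2$-cells, modifications) the resulting commuting square of sets or categories is a pullback on the nose, the square is a strict $3$-pullback in $\twoCATlax$, and the universal lax $2$-cell of $\CAT\bl$ restricts to the comma $2$-cell on $\Groth{F}$.

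The hard part will be bookkeeping rather than anything conceptual: fixing once and for all the orientation of the comparison $2$-cell in the definition of lax comma object in $\twoCATlax$ and keeping it coherent with the lax/oplax conventions used elsewhere, and taking care that ``lax comma object'' and ``strict $3$-pullback'' are genuinely $3$-dimensional limit notions, so that the universal property must be checked on $2$-cells and modifications and not only on objects and $1$-cells. The collapse forced by $\C$ being a $1$-category keeps this manageable, but it has to be written out; a detailed treatment is given in the second author's~\cite{mesiti_twosetenrgrothconstrlaxnlimits}.
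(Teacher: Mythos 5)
The paper does not actually prove this statement: it is imported wholesale from the cited reference \cite{mesiti_twosetenrgrothconstrlaxnlimits}, so the only in-paper material to compare against is the proof of \thex\ref{teorgrothisoplaxcolim}, which uses the same generators-and-relations presentation of $\Groth{F}$ (the building blocks $(f,\id{})$ and $(\id{},\alpha)$ and the interchange relation of \remx\ref{remfactorizationcartvert}) that you invoke. Your overall route --- identify the objects, morphisms and (trivial) $2$-cells of the lax comma object with those of $\Groth{F}$, then obtain the strict $3$-pullback presentation by factoring the lax comma object through the replacement $\tau$ of $\1\:\1\to\CAT$ --- is the expected one and is sound in outline.

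There is, however, one concrete error in your description of the universal cone. You say the universal lax $2$-cell has ``components $X\:\1\to F(C)$ and fillers $\id{F(f)(X)}$''. The filler of the comma $2$-cell at a general morphism $(f,\alpha)\:(C,X)\to(D,X')$ is the natural transformation $F(f)\c X\aR{}X'$ of functors $\1\to F(D)$, i.e.\ the morphism $\alpha\:F(f)(X)\to X'$ itself; it is the identity only at the chosen cartesian morphisms $(f,\id{})$, where $X'=F(f)(X)$. If every filler were an identity, the cone would be a strict $2$-natural transformation and you would be verifying the universal property of the strict pullback of $\1$ along $F$, which is not $\Groth{F}$. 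The verification you defer to (``routine rewriting'' against arbitrary test cones) only goes through once the universal cone is the genuinely lax one. Relatedly, your justification of the strict $3$-pullback claim --- ``a pullback on the nose at each dimension'' --- is fine for detecting the $2$-functor-level universal property, but the $3$-dimensional universal property in $\twoCATlax$ (on lax natural transformations and modifications out of a test $2$-category) still has to be checked; it does hold here because lax transformations and modifications valued in a strict pullback of $2$-categories are determined by, and glued from, their projections, but that is the step where the cited reference does the actual work and it deserves more than the one sentence you give it.
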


We can now prove that the Grothendieck construction is pseudonatural in the base category. Such result does not seem to appear in the literature.

\begin{prop}\label{propgrothconstrispseudonat}
	The equivalence of categories $$\G{\C}\:\m{\C}{\CAT}\aequi \Fib[o][n][\C]$$
	of \thex\ref{usualgrothconstr} given by the Grothendieck construction is pseudonatural in $\C\in \CAT\op$.
\end{prop}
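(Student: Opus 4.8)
The plan is to exhibit, for every functor $H\:\D\to\C$, a natural isomorphism $\G{H}$ between the functors $\G{\D}\c(-\c H)$ and $H\st\c\G{\C}$ from $\m{\C}{\CAT}$ to $\Fib[o][n][\D]$, with component at $F$ an isomorphism of split opfibrations over $\D$
$$\G{H}(F)\:\Groth{F\c H}\aiso H\st\Groth{F},$$
and then to check the coherence conditions that make the family $\fami{\G{H}}{H}$ a pseudonatural transformation between the pseudofunctors $\C\mapsto\m{\C}{\CAT}$ and $\C\mapsto\Fib[o][n][\C]$ on $\CAT\op$. Here I use that a $1$-cell $\C\to\D$ of $\CAT\op$ is a functor $H\:\D\to\C$, acting on the source $\m{-}{\CAT}$ by the precomposition functor $(-\c H)$ --- strictly functorial in $H$ --- and on the target $\Fib[o][n][-]$ by pullback $H\st$ --- again valued in split opfibrations (a pullback of a split opfibration being one, with cleavage chosen so that the defining square is cleavage preserving), and a pseudofunctor whose structure constraints come from the universal property of iterated pullbacks.

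First I would construct $\G{H}(F)$ explicitly. Unwinding \recx\ref{recgrothconstr}: an object of $\Groth{F\c H}$ is a pair $(D,X)$ with $D\in\D$ and $X\in F(H(D))$, and a morphism $(D,X)\to(D',X')$ is $(g,\alpha)$ with $g\:D\to D'$ in $\D$ and $\alpha\:F(H(g))(X)\to X'$ in $F(H(D'))$; while an object of $H\st\Groth{F}=\D\x[\C]\Groth{F}$ is a pair $(D,(C,X))$ with $H(D)=C$ and $X\in F(C)$, and similarly for morphisms. So the assignments $(D,X)\mto(D,(H(D),X))$ and $(g,\alpha)\mto(g,(H(g),\alpha))$ define a functor over $\D$ which is a bijection on objects and on morphisms, hence an isomorphism over $\D$. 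It is moreover cleavage preserving: the chosen cleavage of $\Groth{F\c H}$ lifts $g$ at $(D,X)$ to $(g,\id{})$, which is sent to $(g,(H(g),\id{}))$, and this is precisely the cleavage lifting of $g$ at $(D,(H(D),X))$ in the pullback, since its projection to $\Groth{F}$ is the cartesian lifting $(H(g),\id{})$. Thus $\G{H}(F)$ is an isomorphism in $\Fib[o][n][\D]$, and naturality in $F$ follows at once from the description of $\groth{\gamma}$ in \thex\ref{usualgrothconstr}: for $\gamma\:F\aR{}G$ both composites send $(D,X)$ to $(D,(H(D),\gamma_{H(D)}(X)))$, and likewise on morphisms.

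It remains to verify the pseudonaturality axioms. For $\Id{\C}$ the comparison $\G{\Id{\C}}(F)$ is the identity of $\Groth{F}$, matching the unit constraint of the pullback pseudofunctor. For functors $K\:\E\to\D$ and $H\:\D\to\C$, one compares $\G{H\c K}(F)\:\Groth{F\c H\c K}\aiso(H\c K)\st\Groth{F}$ with the whiskered composite of $\G{K}(F\c H)$ and $K\st\big(\G{H}(F)\big)$; tracing an object $(E,X)$ with $X\in F(H(K(E)))$ through either route lands on the evident element of $\E\x[\D]\big(\D\x[\C]\Groth{F}\big)\iso\E\x[\C]\Groth{F}$, so the required identity reduces to the canonical associativity isomorphism of iterated pullbacks --- which is exactly the pseudofunctoriality constraint of $\Fib[o][n][-]$ --- and the same elementwise inspection settles the morphism case and the $2$-cell coherence for a natural transformation $\eta\:H\aR{}H'$.

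The only real obstacle is this bookkeeping with the pullback constraints in the coherence step: there is no new idea, but one must use the chosen pullbacks and their comparison isomorphisms consistently throughout the coherence diagrams. Alternatively, the statement can be obtained conceptually from \thex\ref{teorgrothconstrislaxcomma}: as $\Groth{F}$ is the strict $3$-pullback of $F$ and $\tau$ and $\groth{F}$ is a split opfibration, pasting the ordinary pullback square of $\groth{F}$ along $H$ on top of that defining square yields the strict $3$-pullback of $F\c H$ and $\tau$, namely $\Groth{F\c H}$; this exhibits $H\st\Groth{F}\simeq\Groth{F\c H}$, and functoriality of these pastings in $H$ supplies the coherence of the pseudonatural transformation.
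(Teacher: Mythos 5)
Your proof is correct, and it reaches the statement by a more explicit route than the paper's. You construct the comparison $\Groth{(F\c H)}\iso H\st\Groth{F}$ by unwinding both sides and checking bijectivity, cleavage preservation, naturality in $F$, and the unit, composition and $2$-cell coherences elementwise. The paper instead takes as its primary argument what you offer only as an alternative in your closing paragraph: by \thex\ref{teorgrothconstrislaxcomma}, $\Groth{F}$ is a lax comma object (equivalently a strict $3$-pullback against the replacement $\tau$), so the isomorphism $H\st\Groth{F}\iso\Groth{(F\c H)}$ and its naturality in $F$ come from pasting of pullbacks and the universal property of the lax comma object; the $1$-dimensional coherence is the pseudofunctoriality of the pullback, and the $2$-dimensional coherence is again the universal property together with the observation that the chosen cleavage makes the defining square of $\Groth{F}$ cleavage preserving. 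Your version buys concreteness and independence from the lax-comma machinery of Section~\ref{sectionpropertiesgrothconstr}, at the cost of more bookkeeping; the paper's version buys shorter coherence checks but leans on \thex\ref{teorgrothconstrislaxcomma} and on \prox\ref{propopfib-}, which supplies the pseudofunctor structure of $\Fib[o][n][-]$ --- in particular the action $\alpha\st$ of a natural transformation $\alpha\:H\aR{}K$ defined by cleavage liftings, a piece of structure your ``$2$-cell coherence'' step uses but does not spell out. That omission is expository rather than mathematical: once $\alpha\st$ is defined as in the paper, the elementwise computation you describe does verify the required square.
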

\begin{proof}
	The assignment $\C\mto \Fib[o][n][\C]$ extends to a pseudofunctor
	$$\Fib[o][n][-]\:\CAT\op\to \CATlarge$$
	that on the underlying category of the domain $\CAT\op$ is a restriction of the pseudofunctor that does the pullback. So given $H\:\D\to \C$ and a split opfibration $p\:\E\to \C$, we define the action of $\Fib[o][n][-]$ on $H$ to be the pullback functor $H\st$.
	\sq[n][5.5][6.5]{H\st \E}{\E}{\D}{\C}{\t{H}}{H\st p}{p}{H}
	Given a natural transformation $\alpha\:H\aR{}K\:\D\to \C$, we use the cleavage of $p$ to define $\Fib[o][n][\alpha]=\alpha\st$ as the natural transformation that has as component on $p$ the functor
	$$\alpha\st p\:H\st \E\to K\st \E$$
	that sends $(D,E)\in H\st \E$ to $(D,(\alpha_D)\stb E)\in K\st \E$. We will prove that $\Fib[o][n][-]$ is indeed a pseudofunctor in \prox\ref{propopfib-}, for general split opfibrations in a $2$-category. In that general setting, we can define $\alpha\st$ by lifting a $2$-cell along an opfibration. This point of view is helpful to apply below the universal property of the lax comma object, using \thex\ref{teorgrothconstrislaxcomma}.
	
	We define a pseudonatural transformation
	$$\G{-}\:\m{-}{\CAT}\apseudo{}\Fib[o][n][-]$$
	that has component on $\C$ given by $\G{C}$. Given a functor $H\:\D\to \C$, we define the structure $2$-cell $\G{H}$ to be the natural isomorphism
	\begin{cd}[5.85][5.85]
		\m{\C}{\CAT}\arrow[r,simeq,"{}"] \arrow[d,"{-\c H}"']\& \Fib[o][n][\C]\arrow[d,"{H\st}"] \\
		\m{\D}{\CAT}\arrow[ru,iso,"{\G{H}}"]\arrow[r,simeq,"{}"] \& \Fib[o][n][\D]
	\end{cd}
	that is given by the pseudofunctoriality of the pullback (or actually by the fact that the pullback of a lax comma is isomorphic to the lax comma with the composite), thanks to \thex\ref{teorgrothconstrislaxcomma}:
	\begin{cd}[5.5][5.5]
		\Groth{(F\c H)}\arrow[rd,iso,"{{\left(\G{H}\right)}_F^{-1}}"]\arrow[rrrd,bend left=18,"{}"]\arrow[rdd,bend right,"{\groth{F\c H}}"']\&[-4.5ex]\& \&[-0.6ex]\\[-4.5ex]
		\& H\st \Groth{F}\arrow[d,"{H\st \groth{F}}"'] \arrow[r,"{}"]\& \Groth{F}\arrow[d,"{\groth{F}}"']\arrow[r,"{}"] \& \CAT\bl \arrow[d,"{\tau}"] \\
		\& \D\arrow[r,"{H}"'] \& \C\arrow[r,"{F}"'] \& \CAT
	\end{cd}
	$\G{H}$ is indeed a natural transformation thanks to the universal property of the lax comma object. And $\G{-}$ satisfies the $1$-dimensional condition of pseudonatural transformation by the pseudofunctoriality of the pullback (choosing the pullbacks along identities to be the identity).
	
	Take now a natural transformation $\alpha\:H\aR{}K\:\D\to \C$. In order to prove the $2$-dimensional condition of pseudonatural transformation for $\G{-}$, we need to show that the following square is commutative for every $F\:\C\to \CAT$:
	\sq[n][5.3][5.8]{H\st \Groth{F}}{K\st\Groth{F}}{\Groth{(F\c H)}}{\Groth{(F\c K)}}{\alpha\st_{\groth{F}}}{{\left(\G{H}\right)}_F}{{\left(\G{K}\right)}_F}{\groth{F\star \alpha}}
	This is shown by the universal property of the lax comma object (or of the pullback) $\Groth{F}$. For this we use the fact that the chosen cleavage on $\groth{F}\:\Groth{F}\to \C$ (with $\ov{f}^{(C,X)}=(f,\id{})$, see \conx\ref{recgrothconstr}) makes the square
	\sq[p][5.5][5.5]{\Groth{F}}{\CAT\bl}{\C}{\CAT}{}{\groth{F}}{\tau}{F}
	into a cleavage preserving morphism.
\end{proof}

\section{Opfibrations in the 2-category of 2-presheaves}\label{sectionopfibintwopresh}

In this section, after recalling the notion of opfibration in a $2$-category, we characterize the opfibrations in the functor $2$-category $\m{\A}{\CAT}$ with $\A$ a small category. We will show that such characterization restricts to one of discrete opfibrations as well. This will allow us to define \predfn{having small fibres} for a discrete opfibration in $\m{\A}{\CAT}$ (\defx\ref{defhavingsmallfibres}).

The definition of (op)fibration in a $2$-category is due to Street~\cite{street_fibandyonlemma}, in terms of algebras for a $2$-monad. It is known that we can equivalently define (op)fibrations in a $2$-category by representability, as done in Weber's~\cite{weber_yonfromtwotop}.

\begin{defne}\label{defopfibinatwocat}
	Let $\L$ be a $2$-category. A \dfn{split opfibration in $\L$} is a morphism $\phi\:G\to F$ in $\L$ such that for every $X\in \L$ the functor 
	$$\phi\c -\:\HomC{\L}{X}{G}\to\HomC{\L}{X}{F}$$ 
	induced by $\phi$ between the hom-categories is a split Grothendieck opfibration (in $\CAT$) and for every morphism $\lambda\:K\to X$ in $\L$ the commutative square
	\sq[5.85][4.7]{\HomC{\L}{X}{G}}{\HomC{\L}{K}{G}}{\HomC{\L}{X}{F}}{\HomC{\L}{K}{F}}{-\c\lambda}{\phi\c -}{\phi\c -}{-\c \lambda}
	is cleavage preserving.
	
	We call $\phi$ a \dfn{discrete opfibration in $\L$} if for every $X$ the functor $\phi\c -$ above is a discrete opfibration (in $\CAT$). In this case, the second condition is automatic.
	
	Given split opfibrations $\phi\:G\to F$ and $\psi\:H\to F$ in $\L$ over $F$, a cleavage preserving morphism from $\phi$ to $\psi$ is a morphism $\xi\:\phi\to \psi$ in $\slice{\L}{F}$ such that for every $X\in \L$ the triangle
	\tr[-1][4]{\HomC{\L}{X}{G}}{\HomC{\L}{X}{H}}{ \HomC{\L}{X}{F}}{\xi\c -}{\phi\c -}{\psi\c -}
	is cleavage preserving.
	
	If $\phi$ and $\psi$ are discrete opfibrations, any morphism in $\slice{\L}{F}$ is cleavage preserving.
	
	Split opfibrations in $\L$ over $F$ and cleavage preserving morphisms form a category $\Fib[o][\L][F]$. We denote the full subcategory on discrete opfibrations in $\L$ as $\Fib[b][\L][F]$.
\end{defne}

\begin{rem}
	By definition, a (split) opfibration $\phi\:G\to F$ in $\L$ is required to lift every $2$-cell $\theta\:\phi\c \alpha\aR{}\beta$ to a cartesian $2$-cell $\ov{\theta}^\alpha\:\alpha\aR{}\theta\stb \alpha$. We can draw the following diagram to say that $\phi\c \theta\stb \alpha=\beta$ and $\phi\star\ov{\theta}^\alpha=\theta$.
		\begin{cd}[4.5][4.5]
			X \arrow[dd,equal]\arrow[rr,bend left=25,"{\alpha}",""'{name=A}]\arrow[rr,bend right=25,"{\theta\stb \alpha}"',""{name=B}]\& \& G \arrow[dd,"{\phi}"]\\[1ex]
			\& |[alias=W]| G \arrow[rd,bend left=20,"{\phi}"]\\[-5ex]
			X \arrow[ru,bend left=18,"{\alpha}"]\arrow[rr,bend right=18,"{\beta}"',""{name=Q}] \&\hphantom{.}\& F
			\arrow[from=A,to=B,Rightarrow,"{\h[2]\ov{\theta}^\alpha}"]
			\arrow[from=W,to=Q,Rightarrow,"{\h[2]\theta}"]
		\end{cd}
	$\ov{\theta}^\alpha$ cartesian means that for every $2$-cell $\rho\:\alpha\aR{}\alpha'$ and $2$-cell $\sigma\:\beta\aR{}\phi\c \alpha'$ such that $\phi\ast \rho=\sigma\c \theta$, there exists a unique $2$-cell $\nu\:\theta\stb\alpha\aR{}\alpha'$ such that $\phi\star \nu=\sigma$ and $\nu\c \ov{\theta}^\alpha=\rho$. Analogously, we can express being split in these terms.
	
	The second condition of \defx\ref{defopfibinatwocat} then requires the chosen lifting of $\theta\star \lambda$ to be $\ov{\theta}^\alpha\star \lambda$ (i.e.\ the chosen lifting of $\theta$ whiskered with $\lambda$).
	
	$\phi$ is a discrete opfibration in $\L$ when the liftings $\ov{\theta}^\alpha$ are unique.
	
	Cleavage preserving morphisms can be expressed analogously.
\end{rem}

\begin{rem}\label{rempullbackopfib}
	Pullbacks of split opfibrations are split opfibrations, because $\HomC{\L}{X}{-}$ preserves pullbacks (as it preserves all limits) and pullbacks of split opfibrations in $\CAT$ are split opfibrations in $\CAT$. We are also using (for the second condition) that we can choose the cleavage of the pullback of a split opfibration in $\L$ so that the universal square that exhibits the pullback is cleavage preserving.
\end{rem}

\begin{rem}
	We can of course apply \defx\ref{defopfibinatwocat} to $\L=\CAT$. The produced notion is equivalent to the usual notion of Grothendieck opfibration (of \defx\ref{recgrothopfib}). This is essentially because for $\L=\CAT$ it suffices to ask the above liftings for $X=\1$. We are then able to lift entire natural transformations $\theta$ as a consequence, componentwise. Analogously with discrete opfibrations in $\L=\CAT$.
	
	We extend this idea below and characterize opfibrations in $\m{\A}{\CAT}$.
\end{rem}

The following proposition does not seem to appear in the literature.

\begin{prop}\label{propcharactopfibacat}
	Let $\A$ be a small category and consider a morphism $\phi\:G\to F$ in $\m{\A}{\CAT}$ \pteor{i.e.\ a natural transformation}. The following facts are equivalent:
	\begin{enumT}
		\item $\phi\: G \to F$ is a split opfibration in $\m{\A}{\CAT}$;
		\item for every $A\in \A$ the component $\phi_A\:G(A) \to F(A)$ of $\phi$ on $A$ is a split opfibration \pteor{in $\CAT$} and for every morphism $h\:A\to B$ in $\A$ the naturality square 
		\sq[5.85][5.85]{G(A)}{G(B)}{F(A)}{F(B)}{G(h)}{\phi_A}{\phi_B}{F(h)}
		is cleavage preserving.
	\end{enumT}
	Analogously with discrete opfibrations in $\m{\A}{\CAT}$, where the condition on the naturality square of $(ii)$ is automatic.
\end{prop}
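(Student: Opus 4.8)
The plan is to prove the two implications separately, and to obtain the discrete statement at the end by running the same argument with ``the unique lifting'' in place of a chosen cartesian lifting. For $(i)\Rightarrow(ii)$ I would use the ($\CAT$-enriched) Yoneda lemma. For each $A\in\A$ the corepresentable $\y{A}=\A(A,-)$, viewed as a $2$-functor $\A\to\CAT$ with values in discrete categories, satisfies $\HomC{\m{\A}{\CAT}}{\y{A}}{G}\cong G(A)$ naturally in $G$, and under this identification the functor $\phi\c-$ of \defx\ref{defopfibinatwocat} becomes $\phi_A$; so the hypothesis applied to $X=\y{A}$ says exactly that $\phi_A$ is a split opfibration in $\CAT$. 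Moreover, for $h\:A\to B$ the induced morphism $\y{h}\:\y{B}\to\y{A}$ in $\m{\A}{\CAT}$ turns the square in \defx\ref{defopfibinatwocat} (taken with $X=\y{A}$, $K=\y{B}$, $\lambda=\y{h}$) into the naturality square of $\phi$ on $h$, so the second condition of $(i)$ forces that square to be cleavage preserving, which is $(ii)$.

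For $(ii)\Rightarrow(i)$, fix $X\in\m{\A}{\CAT}$; the hom-category $\HomC{\m{\A}{\CAT}}{X}{G}$ has natural transformations $X\aR{}G$ as objects and modifications as morphisms, and likewise for $F$. Given a natural transformation $\alpha\:X\to G$ and a modification $\theta\:\phi\c\alpha\aR{}\beta$, I would build the cartesian lifting \emph{componentwise}: for each $A$, the split cleavage of $\phi_A$ lifts the natural transformation $\theta_A\:\phi_A\c\alpha_A\aR{}\beta_A$ to a cartesian natural transformation $\ov{\theta_A}^{\alpha_A}\:\alpha_A\aR{}(\theta_A)\stb\alpha_A$ (this lifting in $\CAT$ being itself performed objectwise on $X(A)$). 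The key point --- and the only place where the cleavage-preserving hypothesis enters --- is that these data assemble into an $\m{\A}{\CAT}$-level lifting: for $x\in X(A)$ and $h\:A\to B$, cleavage-preservation of the naturality square of $\phi$ on $h$ gives $G(h)\p{\ov{(\theta_A)_x}^{\alpha_A(x)}}=\ov{F(h)((\theta_A)_x)}^{G(h)(\alpha_A(x))}$, naturality of $\alpha$ gives $G(h)(\alpha_A(x))=\alpha_B(X(h)(x))$, and the modification axiom for $\theta$ gives $F(h)((\theta_A)_x)=(\theta_B)_{X(h)(x)}$; combining these three identities shows $G(h)\c\p{(\theta_A)\stb\alpha_A}=\p{(\theta_B)\stb\alpha_B}\c X(h)$ on objects, and then on morphisms by the universal property of the cartesian liftings. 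Hence the $(\theta_A)\stb\alpha_A$ assemble into a natural transformation $\theta\stb\alpha\:X\to G$ and the $\ov{\theta_A}^{\alpha_A}$ into a modification $\ov{\theta}^\alpha\:\alpha\aR{}\theta\stb\alpha$ with $\phi\star\ov{\theta}^\alpha=\theta$.

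It then remains to check that $\ov{\theta}^\alpha$ is cartesian, that the resulting cleavage is split, and that the second condition of \defx\ref{defopfibinatwocat} holds. Cartesianness is verified componentwise: given modifications $\rho\:\alpha\aR{}\alpha'$ and $\sigma\:\beta\aR{}\phi\c\alpha'$, the cartesian property of each $\ov{\theta_A}^{\alpha_A}$ yields a unique natural transformation $\eta_A\:(\theta_A)\stb\alpha_A\aR{}\alpha'_A$ with $\phi_A\star\eta_A=\sigma_A$ and $\eta_A\c\ov{\theta_A}^{\alpha_A}=\rho_A$, and a short diagram chase --- using that $\ov{\theta_B}^{\alpha_B}\star X(h)$ is again cartesian and equals $G(h)\star\ov{\theta_A}^{\alpha_A}$, together with the modification axioms for $\rho$ and $\sigma$ and the naturality of $\phi$ --- shows the $\eta_A$ form a modification $\eta\:\theta\stb\alpha\aR{}\alpha'$, necessarily the unique one over $\sigma$ extending $\rho$. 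Splitness of the cleavage of $\phi\c-$ reduces componentwise to splitness of the $\phi_A$, and the second condition holds because componentwise lifting of $2$-cells in $\CAT$ commutes with precomposition, so the chosen lift of $\theta\star\lambda$ along $\phi\c-$ is $\ov{\theta}^\alpha\star\lambda$. Finally, the discrete case is obtained by running the same componentwise argument with unique liftings throughout; in that setting a morphism lying over a given one is determined as soon as it exists, so the naturality squares are automatically cleavage preserving and no extra hypothesis is needed in $(ii)$. I expect the assembling step in $(ii)\Rightarrow(i)$ --- promoting the componentwise cartesian liftings to a genuine cartesian modification at the level of $\m{\A}{\CAT}$ --- to be the main obstacle.
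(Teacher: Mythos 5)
Your proposal is correct and follows essentially the same route as the paper: Yoneda with $X=\y{A}$ and $\lambda=\y{h}$ for $(i)\Rightarrow(ii)$, and a componentwise construction of the cartesian lifting for $(ii)\Rightarrow(i)$, with the cleavage-preservation of the naturality squares doing exactly the work you identify (assembling the objectwise liftings into a natural transformation $\theta\stb\alpha$ and a modification $\ov{\theta}^\alpha$). The three identities you spell out for the assembly step are precisely what the paper leaves as ``straightforward,'' so your write-up is if anything slightly more explicit at the one point that matters.
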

\begin{proof}
	We prove $(i)\aR{}(ii)$. Let $A\in \A$. Taking $X=\y{A}$ in \defx\ref{defopfibinatwocat} with $\L=\m{\A}{\CAT}$, we obtain that
	$$\phi\c -\:\HomC{\m{\A}{\CAT}}{\y{A}}{G}\to\HomC{\m{\A}{\CAT}}{\y{A}}{F}$$ 
	is a split opfibration in $\CAT$. By Yoneda lemma, we have isomorphisms that form a commutative square
	\begin{cd}[5.4][5]
		\HomC{\m{\A}{\CAT}}{\y{A}}{G}\arrow[r,iso,""]\arrow[d,"{\phi\c -}"']\& G(A) \arrow[d,"{\phi_A}"] \\
		{\HomC{\m{\A}{\CAT}}{\y{A}}{F}}\arrow[r,iso,""] \& F(A)
	\end{cd}
	We can then choose a cleavage on $\phi_A$ that makes it into a split opfibration in $\CAT$ such that the square above is cleavage preserving. Given $h\:A\to B$ in $\A$ we have that the naturality square of $\phi$ on $h$ is equal to the pasting
	\begin{cd}[5.5][5]
		G(A) \arrow[d,"{\phi_A}"']\arrow[r,iso,"{}"]\& \HomC{\m{\A}{\CAT}}{\y{A}}{G}\arrow[d,"{\phi\c -}"']\arrow[r,"{-\c \y{h}}"]\&
		\HomC{\m{\A}{\CAT}}{\y{B}}{G}\arrow[r,iso,""]\arrow[d,"{\phi\c -}"]\& G(B) \arrow[d,"{\phi_B}"] \\
		F(A) \arrow[r,iso,"{}"]\& \HomC{\m{\A}{\CAT}}{\y{A}}{F}\arrow[r,"{-\c \y{h}}"']\& {\HomC{\m{\A}{\CAT}}{\y{B}}{F}}\arrow[r,iso,""] \& F(B)
	\end{cd}
	and is thus cleavage preserving.
	
	When $\phi$ is a discrete opfibration, $\phi_A$ is discrete as well for every $A\in \A$.
	
	We now prove $(ii)\aR{}(i)$. Let $X\in \m{\A}{\CAT}$, $\alpha\:X\to G$, $\beta\:X\to F$ and consider $\theta\:\phi\c \alpha\aR{}\beta$. We need to produce a cartesian lifting $\ov{\theta}^\alpha\:\alpha\aR{}\theta\stb\alpha$ of $\theta$ to $\alpha$.
	\begin{cd}[4.3][4.5]
		X \arrow[dd,equal]\arrow[rr,bend left=25,"{\alpha}",""'{name=A}]\arrow[rr,bend right=25,"{\theta\stb \alpha}"',""{name=B}]\& \& G \arrow[dd,"{\phi}"]\\[1ex]
		\& |[alias=W]| G \arrow[rd,bend left=20,"{\phi}"]\\[-5ex]
		X \arrow[ru,bend left=18,"{\alpha}"]\arrow[rr,bend right=18,"{\beta}"',""{name=Q}] \&\hphantom{.}\& F
		\arrow[from=A,to=B,Rightarrow,"{\h[2]\ov{\theta}^\alpha}"]
		\arrow[from=W,to=Q,Rightarrow,"{\h[2]\theta}"]
	\end{cd}
	As $\theta\stb\alpha$ is a natural transformation and $\ov{\theta}^\alpha$ is a modification, we can define them on components. Given $A\in \A$ and $Z\in X(A)$, we define the image of the functor $(\theta\stb\alpha)_A$ on $Z$ and the morphism $(\ov{\theta}^\alpha)_{A,Z}$ in $G(A)$ to be given by the chosen cartesian lifting along $\phi_A$ of $\theta_{A,Z}$ to $\alpha_A(Z)$:
	\fibsq{\alpha_A(Z)}{(\theta\stb\alpha)_A(Z)}{\phi_A(\alpha_A(Z))}{\beta_A(Z)}{(\ov{\theta}^\alpha)_{A,Z}}{\phi_A}{\theta_{A,Z}}
	Given a morphism $f\:Z\to Z'$ in $X(A)$, we define $(\theta\stb\alpha)_A(f)$ by cartesianity of $(\ov{\theta}^\alpha)_{A,Z}$, making by construction $(\ov{\theta}^\alpha)_A$ into a natural transformation. $(\theta\stb\alpha)_A$ is then automatically a functor. In order to prove that $\theta\stb\alpha$ is a natural transformation, we need to show that for every $h\:A\to B$ in $\A$ the following square is commutative:
	\sq[5.5][5.5]{X(A)}{G(A)}{X(B)}{G(B)}{(\theta\stb)_A}{X(h)}{G(h)}{(\theta\stb)_B}
	This is straightforward using the hypothesis that $(G(h),F(h))$ is cleavage preserving. The argument shows at the same time that $\ov{\theta}^\alpha$ is a modification. $\ov{\theta}^\alpha$ is then a lifting of $\theta$ to $\alpha$ by construction, as this can be checked on components. It is straightforward to show that it is cartesian as well, inducing the required morphism on components by the cartesianity of all the $(\ov{\theta}^\alpha)_{A,Z}$. Coherences are shown using again that $(G(h),F(h))$ is cleavage preserving. $\phi$ is split because all $\phi_A$ are split.
	
	Given $\lambda\:K\to X$ in $\m{\A}{\CAT}$, we prove that
	\sq[5.85][4.7]{\HomC{\L}{X}{G}}{\HomC{\L}{K}{G}}{\HomC{\L}{X}{F}}{\HomC{\L}{K}{F}}{-\c \lambda}{\phi\c -}{\phi\c -}{-\c \lambda}
	is cleavage preserving. This means that
	\begin{cd}[4.3][4.5]
		K\arrow[r,"{\lambda}"]\arrow[dd,equal,"{}"]\& X \arrow[dd,equal]\arrow[rr,bend left=25,"{\alpha}",""'{name=A}]\arrow[rr,bend right=25,"{\theta\stb \alpha}"',""{name=B}]\& \& G \arrow[dd,"{\phi}"]\\[1ex]
		\&\& |[alias=W]| G \arrow[rd,bend left=20,"{\phi}"]\\[-5ex]
		K\arrow[r,"{\lambda}"]\&X \arrow[ru,bend left=18,"{\alpha}"]\arrow[rr,bend right=18,"{\beta}"',""{name=Q}] \&\hphantom{.}\& F
		\arrow[from=A,to=B,Rightarrow,"{\h[2]\ov{\theta}^\alpha}"]
		\arrow[from=W,to=Q,Rightarrow,"{\h[2]\theta}"]
	\end{cd}
	exhibits the chosen cartesian lifting of $\theta\star \lambda$ to $\alpha\c \lambda$. This works by construction, as the lifting of every $2$-cell along $\phi$ is reduced to lift morphisms of $F(A)$ along $\phi_A$ for every $A\in \A$.
	
	When $\phi_A$ is a discrete opfibration for every $A\in \A$, the argument above produces the needed cartesian liftings. We only need to show that such liftings are unique. But any lifting $\ov{\theta}^\alpha$ needs to have as component $(\ov{\theta}^\alpha)_{A,Z}$ on $\A\in \A$ and $Z\in X(A)$ the unique lifting of $\theta_{A,Z}$ to $\alpha_A(Z)$ along $\phi_A$.
\end{proof}

\begin{prop}\label{propcharactmoropfibacat}
	Let $\A$ be a small category and consider $\phi,\psi\in \Fib[o][\m{\A}{\CAT}][F]$. Let then $\xi\:\phi\to \psi$ be a morphism in $\slice{\m{\A}{\CAT}}{F}$. The following facts are equivalent:
	\begin{enumT}
		\item $\xi\:\phi\to \psi$ is cleavage preserving;
		\item for every $A\in \A$, the component $\xi_A\:\phi_A\to \psi_A$ is cleavage preserving \pteor{between split opfibrations in $\CAT$}.
	\end{enumT}
\end{prop}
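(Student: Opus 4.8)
The plan is to mimic the proof of \prox\ref{propcharactopfibacat} almost verbatim, one dimension up: the Yoneda lemma will give the implication $(i)\aR{}(ii)$, and the explicit componentwise description of cartesian liftings in $\m{\A}{\CAT}$ obtained in \prox\ref{propcharactopfibacat} will give $(ii)\aR{}(i)$.

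To prove $(i)\aR{}(ii)$, I would fix $A\in\A$ and instantiate the defining condition of a cleavage preserving morphism (\defx\ref{defopfibinatwocat}) at $X=\y{A}$: the triangle
$$\HomC{\m{\A}{\CAT}}{\y{A}}{G}\xrightarrow{\xi\c-}\HomC{\m{\A}{\CAT}}{\y{A}}{H}\xrightarrow{\psi\c-}\HomC{\m{\A}{\CAT}}{\y{A}}{F}$$
(with $\phi\c-$ along the third edge) is cleavage preserving. The Yoneda isomorphisms identify this triangle with $G(A)\xrightarrow{\xi_A}H(A)\xrightarrow{\psi_A}F(A)$ (with $\phi_A$ along the third edge), and --- as recorded in the proof of \prox\ref{propcharactopfibacat} --- the cleavages chosen on $\phi_A$ and $\psi_A$ are exactly the ones making the two Yoneda squares cleavage preserving. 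Hence $\xi_A$ carries the chosen cartesian liftings of $\phi_A$ to those of $\psi_A$, i.e.\ $\xi_A$ is cleavage preserving.

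For $(ii)\aR{}(i)$, I would fix $X\in\m{\A}{\CAT}$ and verify directly that the triangle $\HomC{\m{\A}{\CAT}}{X}{G}\xrightarrow{\xi\c-}\HomC{\m{\A}{\CAT}}{X}{H}\xrightarrow{\psi\c-}\HomC{\m{\A}{\CAT}}{X}{F}$ is cleavage preserving. Unwinding the definition: given $\alpha\:X\to G$ and a $2$-cell $\theta\:\phi\c\alpha\aR{}\beta$ with chosen cartesian lifting $\ov{\theta}^{\alpha}\:\alpha\aR{}\theta\stb\alpha$ along $\phi\c-$, I must check that $\xi\star\ov{\theta}^{\alpha}$ is the chosen cartesian lifting of $\theta$ along $\psi\c-$ to $\xi\c\alpha$ (this makes sense since $\psi\c\xi\c\alpha=\phi\c\alpha$ as $\xi$ lies over $F$). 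Both sides are modifications, so it suffices to compare components. By the explicit formula in the proof of \prox\ref{propcharactopfibacat}, $(\ov{\theta}^{\alpha})_{A,Z}$ is the chosen cartesian lifting of $\theta_{A,Z}$ along $\phi_A$ to $\alpha_A(Z)$, while the $(A,Z)$-component of the chosen cartesian lifting of $\theta$ along $\psi\c-$ is the chosen cartesian lifting of $\theta_{A,Z}$ along $\psi_A$ to $\xi_A(\alpha_A(Z))$. Since $(\xi\star\ov{\theta}^{\alpha})_{A,Z}=\xi_A\bigl((\ov{\theta}^{\alpha})_{A,Z}\bigr)$, these agree precisely because $\xi_A$ is cleavage preserving, i.e.\ by $(ii)$.

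I do not expect a genuine obstacle here: the substantive work was already carried out in \prox\ref{propcharactopfibacat}, where lifting of $2$-cells in $\m{\A}{\CAT}$ was reduced to componentwise lifting along the functors $\phi_A$. The only point requiring care is bookkeeping about \emph{which} cleavages on $\phi_A$ and $\psi_A$ are meant in $(ii)$ --- namely those induced by \prox\ref{propcharactopfibacat} from the cleavages of $\phi$ and $\psi$ as split opfibrations in $\m{\A}{\CAT}$, equivalently the ones compatible with the Yoneda isomorphisms.
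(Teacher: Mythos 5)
Your proposal is correct and follows essentially the same route as the paper: $(i)\Rightarrow(ii)$ by instantiating at $X=\y{A}$ and transporting along the Yoneda isomorphisms (with the cleavages on the components chosen exactly as in \prox\ref{propcharactopfibacat}), and $(ii)\Rightarrow(i)$ by checking the required equality of modifications componentwise, where it reduces to the hypothesis via the componentwise description of liftings. The paper's own argument for $(ii)\Rightarrow(i)$ is just a one-line version of the unwinding you spell out.
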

\begin{proof}
	We prove $(i)\aR{}(ii)$. Given $A\in \A$ we have that
	\begin{cd}[5.5][5]
		G(A) \arrow[d,"{\phi_A}"']\arrow[r,iso,"{}"]\& \HomC{\m{\A}{\CAT}}{\y{A}}{G}\arrow[d,"{\phi\c -}"']\arrow[r,"{\xi\c-}"]\&
		\HomC{\m{\A}{\CAT}}{\y{A}}{H}\arrow[r,iso,""]\arrow[d,"{\psi\c -}"]\& H(A) \arrow[d,"{\psi_A}"] \\
		F(A) \arrow[r,iso,"{}"]\& \HomC{\m{\A}{\CAT}}{\y{A}}{F}\arrow[r,equal]\& {\HomC{\m{\A}{\CAT}}{\y{A}}{F}}\arrow[r,iso,""] \& F(A)
	\end{cd}
	is cleavage preserving.
	
	We prove $(ii)\aR{}(i)$. The equality of modifications that we need to prove can be checked on components, where it holds by hypothesis.
\end{proof}

Thanks to \prox\ref{propcharactopfibacat}, we can define \predfn{having small fibres} for a discrete opfibration in $\m{\A}{\Cat}$.

\begin{defne}\label{defhavingsmallfibres}
	Let $\A$ be a small category. A discrete opfibration $\phi\:G\to F$ in $\m{\A}{\Cat}$ \dfn{has small fibres} if for every $A\in \A$ the component $\phi_A$ of $\phi$ on $A$ has small fibres.
	
	We denote as $\Fib+[b][\m{\A}{\Cat}][F]$ the full subcategory of $\Fib[b][\m{\A}{\Cat}][F]$ on the discrete opfibrations with small fibres.
\end{defne}

\begin{rem}\label{remhavingsmallfibresispbstable}
	The property of having small fibres for a discrete opfibration in $\m{\A}{\Cat}$ is stable under pullbacks. Indeed taking components on $A\in \A$ preserves 2-limits in 2-presheaves and discrete opfibrations in $\Cat$ with small fibres are stable under pullbacks.
\end{rem}

We will also need the following result.

\begin{prop}\label{propopfib-}
	Let $\L$ be a $2$-category with pullbacks. The assignment $F\in \L\mto \Fib[o][\L][F]\in \CATlarge$ extends to a pseudofunctor 
	$$\Fib[o][\L][-]\:\L\op\to \CATlarge.$$
	
	Moreover, this pseudofunctor restricts to a pseudofunctor
	$$\Fib+[b][\L][-]\:\L\op\to \CATlarge.$$
\end{prop}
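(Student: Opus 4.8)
The plan is to define the action of $\Fib[o][\L][-]$ on morphisms by pullback and to check the pseudofunctoriality coherences using \remx\ref{rempullbackopfib}, which guarantees that pullbacks of split opfibrations in $\L$ are again split opfibrations with a canonically chosen cleavage. Concretely, for a morphism $H\:\D\to \C$ in $\L$ and a split opfibration $\phi\:G\to F$ in $\L$ over $F$, I would set $\Fib[o][\L][H](\phi)\deq H\st\phi$, the pullback of $\phi$ along $H$, which lives over $\D$ (here I am conflating objects of $\L$ with the chosen target $F$; more precisely $\Fib[o][\L][-]$ sends an object $F\in\L$ to the category of split opfibrations over $F$, and sends $H\:F'\to F$ to the pullback functor $H\st\:\Fib[o][\L][F]\to\Fib[o][\L][F']$). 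By \remx\ref{rempullbackopfib} the cleavage of $H\st\phi$ can be chosen so that the universal square exhibiting the pullback is cleavage preserving, and this choice makes $H\st$ well-defined on cleavage preserving morphisms $\xi\:\phi\to\psi$ too: one takes the induced map between pullbacks, whose cleavage-preservation follows by the universal property together with the cleavage-preservation of $\xi$ and of the two universal squares.

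Next I would supply the structure isomorphisms. For composable $H\:\E\to\D$ and $K\:\D\to\C$ there is a canonical isomorphism $(KH)\st\phi\cong H\st(K\st\phi)$ coming from the pasting of pullback squares; one checks it is cleavage preserving because it is built out of the canonically chosen cleavages on the iterated pullbacks, and naturality in $\phi$ is the usual naturality of the composition-of-pullbacks isomorphism. For an identity $\Id{F}$ one chooses the pullback along $\Id{F}$ to be the identity functor on the nose (this is a legitimate choice of pullback), so the unit isomorphism is the identity. The pentagon (associativity) and triangle (unit) coherences then reduce to the corresponding coherences for iterated pullbacks, which hold because all the squares in sight are genuine pullback squares and the comparison isomorphisms are the unique maps over the base respecting the two projections.

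For the final sentence, I would observe that the whole construction restricts to discrete opfibrations: by \remx\ref{rempullbackopfib} (discrete case — a pullback of a discrete opfibration is discrete, since $\HomC{\L}{X}{-}$ preserves the pullback and pullbacks of discrete opfibrations in $\CAT$ are discrete), the pullback functor $H\st$ carries $\Fib[b][\L][F]$ into $\Fib[b][\L][F']$, and on discrete opfibrations every morphism over the base is automatically cleavage preserving (see \defx\ref{defopfibinatwocat}), so there is nothing further to check on morphisms. The structure isomorphisms and the coherences are then just the restrictions of those already constructed, so $\Fib[b][\L][-]\:\L\op\to\CAT$ is a pseudofunctor and the inclusion $\Fib[b][\L][-]\hookrightarrow\Fib[o][\L][-]$ is a (componentwise full-subcategory) morphism of pseudofunctors.

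The only mildly delicate point — the "main obstacle" — is bookkeeping the chosen cleavages so that the comparison isomorphisms between iterated pullbacks are genuinely cleavage preserving rather than merely isomorphisms in $\slice{\L}{-}$; this is exactly the extra clause recalled in \remx\ref{rempullbackopfib}, that the universal square of a pullback of a split opfibration can be taken cleavage preserving, applied repeatedly. Everything else is the standard verification that "pullback" is a pseudofunctor $\L\op\to\CAT$, restricted to the (split, resp. discrete) opfibration objects, which are stable under pullback.
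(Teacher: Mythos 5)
Your construction of the $1$-cell action (pullback, with cleavages chosen as in \remx\ref{rempullbackopfib}, plus the coherence isomorphisms for composites and identities) and the restriction to discrete opfibrations both agree with the paper. But there is a genuine gap: $\L$ is a $2$-category, so a pseudofunctor $\Fib[o][\L][-]\:\L\op\to \CAT$ must also act on the $2$-cells of $\L$, and your proposal never defines this action. This is not a routine omission, because it is precisely the step that cannot be obtained from ``the standard verification that pullback is a pseudofunctor'': for a general morphism $\phi\:G\to F$ there is no canonical comparison $\alpha\st\phi\to\beta\st\phi$ associated to a $2$-cell $\delta\:\alpha\aR{}\beta\:F'\to F$. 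The paper produces it by using the opfibration structure of $\phi$ itself: one takes the chosen cartesian lifting of $\delta\star(\alpha\st\phi)$ along $\phi$, and the codomain of that lifting factors through the pullback $\beta\st G$ by its universal property, yielding the component $\delta\st_\phi\:\alpha\st\phi\to\beta\st\phi$ of a natural transformation $\delta\st\:\alpha\st\aR{}\beta\st$.

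This missing layer is also where most of the hypotheses you did not touch get used: naturality of $\delta\st$ in $\phi$ requires that the morphisms of $\Fib[o][\L][F]$ are cleavage preserving; strict preservation of identity $2$-cells and of vertical composition requires that the opfibrations are \emph{split}; and compatibility of the $2$-cell action with whiskering (i.e.\ naturality of the composition isomorphisms) uses the second condition of \defx\ref{defopfibinatwocat} for whiskering on one side and the cleavage-preservation of the universal pullback square on the other. Without the $2$-cell action the proposition as used later in the paper (e.g.\ $\alpha\st$ for a natural transformation $\alpha\:H\aR{}K$ in \prox\ref{propgrothconstrispseudonat}, and $\delta_A\st$ in \prox\ref{propgrothconstroneveryindex}) would not be available, so this part must be supplied to complete the proof.
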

\begin{proof}
	On the underlying category of $\L\op$, we define $\Fib[o][\L][-]$ as the restriction of the pseudofunctor given by the pullback (to consider opfibrations rather than general morphisms). So given $\alpha\:F'\to F$ in $\L$, we have $$\Fib[o][\L][\alpha]=\alpha\st\:\Fib[o][\L][F]\to\Fib[o][\L][F']$$
	We are also using \remx\ref{rempullbackopfib}. We immediately get also the isomorphisms that regulate the image of identities and compositions. 
	
	Given a $2$-cell $\delta\:\alpha\aR{}\beta\:F'\to F$ in $\L$, we define $\Fib[o][\L][\delta]=\delta\st$ as the natural transformation with component on a split opfibration $\phi\:G\to F$ in $\L$ given by the morphism $\delta\st_\phi\:\alpha\st \phi\to \beta\st\phi$ induced by lifting $\delta\star \alpha\st\phi$ along $\phi$:
	\begin{cd}[6][7]
		\alpha\st G\arrow[rrd,bend left,"{}"'{name=Q}]\arrow[rdd,bend right,"{\alpha\st \phi}"'{inner sep=0.3ex}]\arrow[rd,dashed,"{\delta\st_\phi}"{inner sep=0.25ex}]\&[-5ex] \\[-5ex]
		\& \beta\st G\arrow[from=Q,Rightarrow,"{\ov{\delta\star \alpha\st\phi}}"{pos=0.35}]\arrow[r,"{}"]\arrow[d,"{\beta\st \phi}"']\& G\arrow[d,"{\phi}"] \\
		\& F' \arrow[r,bend left,"{\alpha}",""'{name=A}]\arrow[r,bend right,"{\beta}"',""{name=B}]\& F
		\arrow[from=A,to=B,Rightarrow,"{\delta}"]
	\end{cd}
	Indeed the codomain of the lifting of $\delta\star \alpha\st\phi$ along $\phi$ induces the morphism $\delta\st_\phi$ by the universal property of the pullback $\beta\st G$. $\delta\st$ is a natural transformation because the morphisms in $\Fib[o][\L][F]$ are cleavage preserving.
	
	$\Fib[o][\L][-]$ preserves identity $2$-cells and vertical compositions of $2$-cells because the objects of $\Fib[o][\L][F]$ are split. We already know that the isomorphisms that regulate the image of identities and compositions satisfy the $1$-dimensional coherences. It only remains to prove their naturality (actually, only the one for compositions). This essentially means that it preserves whiskerings, up to pasting with the isomorphisms that regulate the image of compositions. For whiskering on the left, this is true by the second condition of \defx\ref{defopfibinatwocat}. For whiskerings on the right, we use that the universal square that exhibits a pullback is cleavage preserving.
	
	Thus we conclude that $\Fib[o][\L][-]$ is a pseudofunctor. It then readily restricts to a pseudofunctor $\Fib+[b][\L][-]$.
\end{proof}

\section{Indexed Grothendieck construction}\label{sectionindexedgrothconstr}

In this section, we present our main results. We prove an equivalence of categories between split opfibrations in $\m{\A}{\CAT}$ over $F$ and $2$-copresheaves on $\Groth{F}$. This equivalence restricts to one between discrete opfibrations in $\m{\A}{\CAT}$ over $F$ with small fibres and $\Set$-valued copresheaves on $\Groth{F}$. We also show that both such equivalences are pseudonatural in $F$.

We introduce the explicit indexed Grothendieck construction and show how our results recover known useful results. In particular, we recover the equivalence between slices of presheaves over $F\:\A\to \Set$ and presheaves on $\Groth{F}$, that shows how the slice of a Grothendieck topos is a Grothendieck topos. We interpret our main theorem as a $2$-dimensional generalization of this.

Let $\A$ be a small category and consider the functor $2$-category $\m{\A}{\CAT}$.

\begin{rem}
	We aim at proving that for every $2$-functor $F\:\A\to \CAT$, there is an equivalence of categories
	$$\Fib[o][\m{\A}{\CAT}][F]\simeq \m{\Grothdiag{F}}{\CAT}$$
	between split opfibrations in $\m{\A}{\CAT}$ over $F$ (see \prox\ref{propcharactopfibacat}) and $2$-copresheaves on the Grothendieck construction $\Groth{F}$ of $F$.
	
	Our strategy will be to use \thex\ref{teorgrothisoplaxcolim}, that states that the Grothendieck construction $\Groth{F}$ of $F$ is equivalently the oplax colimit of the $2$-diagram $F\:\A\to \CAT$. Notice that a (strict) $2$-functor from a category to $\CAT$ is the same thing as a functor into the $1$-category $\Catzero$. In \remx\ref{remtwocatvariations}, we will say what we could do to extend our results to $\A$ a $2$-category or $F$ a pseudofunctor.
\end{rem}

\begin{prop}\label{propapplyoplaxcolim}
	There is an isomorphism of categories
	$$\m{\Grothdiag{F}}{\Catzero} \iso \HomC{\moplax{\A\op}{\CATlarge}}{\Delta 1}{\m{F(-)}{\Catzero}}$$
	which is \pteor{strictly} $2$-natural in $F$.
\end{prop}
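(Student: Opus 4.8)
The plan is to read the isomorphism off the oplax colimit presentation of the Grothendieck construction and then to verify its naturality in $F$ by unwinding the explicit description on both sides. First, by \thex\ref{teorgrothisoplaxcolim} we have $\Grothdiag{F}=\oplaxcolim{F}$, with universal oplax cocone the lax natural transformation $\opn{inc}\:F\alax{}\Delta\Grothdiag{F}$ constructed in the proof of that theorem. The key observation is that that proof establishes the universal property of $\opn{inc}$ in the sense of \remx\ref{remoplaxcolimits} for an \emph{arbitrary} target category, not merely a small one: the morphism $s$ produced from a lax cocone and the $2$-cell produced from a modification are defined purely from the factorization of morphisms of $\Grothdiag{F}$ as $(\id{},\alpha)\c(f,\id{})$ recalled in \remx\ref{remfactorizationcartvert}. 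Taking the test object to be $\Cat$ and invoking the second isomorphism of \defx\ref{defoplaxcolim} to pass between lax cocones on $F$ and oplax cocones, we obtain exactly
$$\m{\Grothdiag{F}}{\Cat}\iso\HomC{\mlax{\A}{\CAT}}{F}{\Delta\Cat}\iso\HomC{\moplax{\A\op}{\CAT}}{\Delta 1}{\m{F(-)}{\Cat}},$$
the first isomorphism sending $s\:\Grothdiag{F}\to\Cat$ to $\Delta s\c\opn{inc}$, that is, to the family $\left(s\c\opn{inc}_A\right)_{A\in\A}$ together with the structure $2$-cells $s\star\opn{inc}_f$.

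Equivalently --- and this is the description I would actually record, since it makes naturality transparent --- one checks directly that a functor $H\:\Grothdiag{F}\to\Cat$ carries the same data as an oplax cocone on $F$ with vertex $\Cat$: by the factorization of \remx\ref{remfactorizationcartvert} such an $H$ is determined by the functors $G_A\deq H(A,-)\:F(A)\to\Cat$ together with, for each $h\:A\to B$ in $\A$, the natural transformation $\theta_h\:G_A\aR{}G_B\c F(h)$ with components $H(h,\id{})$, and the functoriality of $H$ amounts precisely to the cocone axioms $\theta_{\id{A}}=\id{G_A}$ and $\theta_{h'\c h}=(\theta_{h'}\star F(h))\c\theta_h$; on morphisms, a natural transformation $H\aR{}H'$ restricts over each $A$ to a component $G_A\aR{}G'_A$, assembling into a modification of the associated oplax cocones, and conversely. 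In either guise the isomorphism is a tautological repackaging of data, hence strict.

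To check $2$-naturality in $F$, take $\gamma\:F\aR{}F'$ in $\m{\A}{\CAT}$. On the left-hand side we precompose with the functor $\groth{\gamma}\:\Grothdiag{F}\to\Grothdiag{F'}$ of \thex\ref{usualgrothconstr}, which fixes the first coordinate and acts by the relevant component of $\gamma$ on the second; on the right-hand side we postcompose oplax cocones along the transformation $\m{F'(-)}{\Cat}\aR{}\m{F(-)}{\Cat}$ with component $-\c\gamma_A$ at $A$, which is strictly $2$-natural precisely because $\gamma$ is natural. The resulting square of functors commutes strictly: in the hand description, precomposing $H'\:\Grothdiag{F'}\to\Cat$ with $\groth{\gamma}$ replaces the component functor $H'(A,-)$ by $H'(A,-)\c\gamma_A$ (and the structure cell accordingly), which is exactly the action of $-\c\gamma_A$ on cocones; equivalently, in the colimit picture, $\groth{\gamma}\c\opn{inc}^{F}_A=\opn{inc}^{F'}_A\c\gamma_A$ and $\groth{\gamma}\star\opn{inc}^{F}_f$ agrees with $\opn{inc}^{F'}_f$ whiskered by $\gamma_A$. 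Compatibility with a modification $\Gamma\:\gamma\aM{}\gamma'$ in $\m{\A}{\CAT}$ is checked componentwise in the same way.

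The step I expect to be most delicate --- the main obstacle --- is the size bookkeeping: $\Cat$ is not an object of the $2$-category $\CAT$ in which \thex\ref{teorgrothisoplaxcolim} constructs the oplax colimit. I would resolve this either by enlarging the universe (working in the $2$-category of locally small categories, where $\Cat$ is an object and $\Grothdiag{F}$ remains the oplax colimit, because the proof of \thex\ref{teorgrothisoplaxcolim} runs verbatim there), or simply by adopting the hand-built correspondence of the second paragraph, which never mentions a colimit and for which the isomorphism and its naturality are direct inspections. Everything else is routine bookkeeping with the structure $2$-cells.
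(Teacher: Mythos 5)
Your proof is correct, and its first half coincides with the paper's: both obtain the isomorphism from \thex\ref{teorgrothisoplaxcolim}, i.e.\ from $\Grothdiag{F}=\oplaxcolim{F}$ together with the translation of \defx\ref{defoplaxcolim} between lax cocones under $F$ and oplax cocones. Where you diverge is in the treatment of $2$-naturality in $F$: the paper disposes of it in one line by citing Kelly's general result that a weighted colimit is $2$-natural in the diagram (applicable because, by Street, an oplax colimit is a weighted one), whereas you verify the naturality square directly from the explicit descriptions of $\groth{\gamma}$ and of the factorization $(\id{},\alpha)\c(f,\id{})$ of \remx\ref{remfactorizationcartvert}. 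Your computation is right --- $\groth{\gamma}$ fixes the first coordinate and sends $(h,\id{})$ to $(h,\id{})$, so precomposition with it is exactly postcomposition of cocones with $-\c\gamma_A$ --- and it has the merit of being self-contained and of making the $2$-dimensional condition checkable componentwise; the paper's citation is shorter. Your remark on size is also well taken and is a point the paper passes over silently: $\Cat$ is not an object of $\CAT$, so the universal property established in \thex\ref{teorgrothisoplaxcolim} must be seen to hold for large targets --- which it does, since that proof nowhere uses smallness of $U$ --- or one must enlarge the universe; either fix works, and your hand-built correspondence between functors $\Grothdiag{F}\to\Cat$ and oplax cocones sidesteps the issue entirely.
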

\begin{proof}
	We obtain the isomorphism of categories in the statement by \thex\ref{teorgrothisoplaxcolim}, that proves that $\Groth{F}=\oplaxcolim{F}$ (see also \defx\ref{defoplaxcolim}). The isomorphism is $2$-natural in $F$ by a general result on weighted colimits, see Kelly's~\cite{kelly_basicconceptsofenriched} (Section~3.1). We can apply this result on an oplax colimit as well because by Street's~\cite{street_limitsindexedbycatvalued} any oplax colimit is also a weighted one.
\end{proof}

\begin{rem}
	Thanks to \prox\ref{propapplyoplaxcolim}, we can reduce ourselves to apply the usual Grothendieck construction on every index. For this we also need the pseudonaturality of the Grothendieck construction (\prox\ref{propgrothconstrispseudonat}).
\end{rem}

\begin{prop}\label{propgrothconstroneveryindex}
	There is an equivalence of categories
	$$\HomC{\moplax{\A\op}{\CATlarge}}{\Delta 1}{\m{F(-)}{\Catzero}}\simeq \HomC{\mPsoplax{\A\op}{\CATlarge}}{\Delta 1}{\Fib[o][\CAT][F(-)]}$$
	which is pseudonatural in $F$, where $\mPsoplax{\A\op}{\CATlarge}$ is the $2$-category of pseudofunctors, oplax natural transformations and modifications.
\end{prop}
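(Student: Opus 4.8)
The plan is to obtain the equivalence by whiskering oplax cocones with the usual Grothendieck construction performed pointwise over $\A$, exploiting its pseudonaturality in the base. First I would recall that for every object $\C$ of $\CAT$ the Grothendieck construction (\thex\ref{usualgrothconstr}) is an equivalence of categories $\G{\C}\:\m{\C}{\Cat}\aequi\Fib[o][\CAT][\C]$ --- here using that, over $\C$, split opfibrations in $\CAT$ in the sense of \defx\ref{defopfibinatwocat} coincide with ordinary split opfibrations --- and that, by \prox\ref{propgrothconstrispseudonat}, these equivalences are the components of a pseudonatural transformation $\G{-}\:\m{-}{\Cat}\apseudo{}\Fib[o][\CAT][-]$ of pseudofunctors $\CAT\op\to\CAT$. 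Precomposing (whiskering) with the $2$-functor $\A\op\to\CAT\op$ induced by $F$, I would obtain a pseudonatural transformation $\G{F(-)}$ from the strict $2$-functor $A\mto\m{F(A)}{\Cat}$ to the pseudofunctor $A\mto\Fib[o][\CAT][F(A)]$, every component of which is an equivalence of categories. The fact that $A\mto\Fib[o][\CAT][F(A)]$ is genuinely only a pseudofunctor --- \prox\ref{propopfib-} with $\L=\CAT$, precomposed with $F$ --- is precisely why the right-hand side of the statement must be phrased with $\mPsoplax{\A\op}{\CAT}$ and not with $\moplax{\A\op}{\CAT}$.

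Next I would observe that $\G{F(-)}$, being a pseudonatural transformation (its structure $2$-cells are the invertible isomorphisms $\G{F(h)}$ of \prox\ref{propgrothconstrispseudonat}) all of whose components are equivalences, is an equivalence in the $2$-category $\mPsoplax{\A\op}{\CAT}$: a pseudonatural pseudo-inverse together with invertible witnessing modifications exists, and these are in particular oplax. Applying the representable $2$-functor $\HomC{\mPsoplax{\A\op}{\CAT}}{\Delta 1}{-}$, which like any $2$-functor preserves equivalences, I then obtain an equivalence of categories
$$\HomC{\mPsoplax{\A\op}{\CAT}}{\Delta 1}{\m{F(-)}{\Cat}}\simeq\HomC{\mPsoplax{\A\op}{\CAT}}{\Delta 1}{\Fib[o][\CAT][F(-)]}.$$
Finally, since $\Delta 1$ and $A\mto\m{F(A)}{\Cat}$ are both strict $2$-functors, oplax natural transformations and modifications between them are the same whether computed in $\moplax{\A\op}{\CAT}$ or in $\mPsoplax{\A\op}{\CAT}$, so the left-hand side above is nothing but $\HomC{\moplax{\A\op}{\CAT}}{\Delta 1}{\m{F(-)}{\Cat}}$; this yields the equivalence of categories in the statement.

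For the pseudonaturality in $F$, I would check that both sides are (contravariantly) pseudofunctorial in $F$ --- a morphism $F\to G$ induces a morphism of diagrams over $\A\op$ by pointwise precomposition on one side and by pointwise pullback ($\Fib[o][\CAT][-]$ of \prox\ref{propopfib-}) on the other --- and that the family $\G{F(-)}$ is itself pseudonatural in $F$, its coherence isomorphisms being exactly the naturality data of $\G{-}$ in the base supplied by \prox\ref{propgrothconstrispseudonat}. Applying $\HomC{\mPsoplax{\A\op}{\CAT}}{\Delta 1}{-}$ post-composition-wise then upgrades this to a pseudonatural transformation between the pseudofunctors $F\mto\HomC{\moplax{\A\op}{\CAT}}{\Delta 1}{\m{F(-)}{\Cat}}$ and $F\mto\HomC{\mPsoplax{\A\op}{\CAT}}{\Delta 1}{\Fib[o][\CAT][F(-)]}$ whose components are the equivalences just produced, which is the asserted pseudonaturality.

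The genuine content is already isolated in \prox\ref{propgrothconstrispseudonat} and \prox\ref{propopfib-}, so the remaining work is essentially $2$-categorical bookkeeping. The step I expect to demand the most care is keeping track of the ambient $2$-categories: recognizing that the source diagram lives in $\moplax{\A\op}{\CAT}$ while the target diagram only lives in $\mPsoplax{\A\op}{\CAT}$, invoking correctly that a pseudonatural transformation with equivalence components is an equivalence in the oplax functor $2$-category, identifying the relevant oplax-cocone hom-categories across the strict/pseudo boundary, and checking that everything assembles coherently as $F$ varies so that the equivalence and its pseudonaturality form a single statement.
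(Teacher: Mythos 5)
Your proposal is correct and follows essentially the same route as the paper: identify the two hom-categories across the strict/pseudo boundary, whisker the pseudonatural equivalence $\G{-}$ of \prox\ref{propgrothconstrispseudonat} with $F$ to get an equivalence $\m{F(-)}{\Cat}\simeq\Fib[o][\CAT][F(-)]$ internal to $\mPsoplax{\A\op}{\CAT}$, apply the representable $\HomC{\mPsoplax{\A\op}{\CAT}}{\Delta 1}{-}$, and derive pseudonaturality in $F$ from the pseudofunctoriality of pullback and the naturality data of $\G{-}$. The paper merely spells out in more detail the points you defer to bookkeeping (the explicit quasi-inverse in $\mPsoplax{\A\op}{\CAT}$ and the pseudofunctor structure of $F\mto\HomC{\mPsoplax{\A\op}{\CAT}}{\Delta 1}{\Fib[o][\CAT][F(-)]}$ via $\alpha\st_{-}$).
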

\begin{proof}
	Notice that
	$$\HomC{\moplax{\A\op}{\CATlarge}}{\Delta 1}{\m{F(-)}{\Catzero}}\iso \HomC{\mPsoplax{\A\op}{\CATlarge}}{\Delta 1}{\m{F(-)}{\Catzero}}$$
	So it suffices to exhibit an equivalence
	$$\m{F(-)}{\Catzero}\simeq \Fib[o][n][F(-)]$$
	in the (large) $2$-category $\mPsoplax{\A\op}{\CATlarge}$. Indeed, for a general (large) $2$-category, postcomposing with a morphism that is an equivalence in the $2$-category gives a functor between hom-categories that is an equivalence of categories. The left hand side is certainly a $2$-functor, while the right hand side is a pseudofunctor by \prox\ref{propopfib-}. We have that the Grothendieck construction gives a pseudonatural adjoint equivalence
	$$\G{-}\:\m{-}{\Catzero}\simeq\Fib[o][n][-],$$
	by \prox\ref{propgrothconstrispseudonat}. Whiskering it with $F\op$ on the left gives another pseudonatural adjoint equivalence, that is then also an equivalence in the $2$-category $\mPsoplax{\A\op}{\CATlarge}$ as needed. The quasi-inverse is given by extending to a pseudonatural transformation the quasi-inverses of the Grothendieck construction on every component. This can always be done by choosing as structure $2$-cells the pasting of the inverse of the structure $2$-cells of the Grothendieck construction with unit and counit of the adjoint equivalences on components. The triangular equalities then guarantee that we have an equivalence in $\mPsoplax{\A\op}{\CATlarge}$ (we have that the two composites are isomorphic to the identity).
	
	We now prove that the equivalence of categories 
	$$\G{F(-)}\c -\:\HomC{\moplax{\A\op}{\CATlarge}}{\Delta 1}{\m{F(-)}{\Catzero}}\simeq \HomC{\mPsoplax{\A\op}{\CATlarge}}{\Delta 1}{\Fib[o][\CAT][F(-)]}$$
	that we have produced is pseudonatural in $F$.
	
	We show that $\HomC{\mPsoplax{\A\op}{\CATlarge}}{\Delta 1}{\Fib[o][\CAT][+(-)]}\:{\m{\A}{\CAT}}\op\to \CATlarge$ is a pseudofunctor. Given $\alpha\:F'\to F$ in $\m{\A}{\CAT}$, we define the image on $\alpha$ to be $\alpha_{-}\st\c -$, where
	$$\alpha_{-}\st\:\Fib[o][n][F(-)]\apseudo{}\Fib[o][n][F'(-)]\:\A\op\to \CATlarge$$
	is the pseudonatural transformation described as follows. For every $A\in \A$, we define $(\alpha_{-}\st)_A\deq \alpha_A\st$ (see \prox\ref{propopfib-}). For every $h\:A\to B$ in $\A$, we define the structure $2$-cell $(\alpha_{-}\st)_h$ to be the pasting
	\begin{cd}[5.9][5.9]
		\Fib[o][n][F(B)]\arrow[r,iso,shift right=7ex,"{}",start anchor={[xshift=-11.2ex]}] \arrow[r,iso,shift right=3.5ex,"{}",end anchor={[xshift=11.2ex]}]\arrow[rd,"{{(F(h)\c \alpha_A)}\st}"{description}] \arrow[d,"{F(h)\st}"']\arrow[r,"{\alpha_B\st}"] \& \Fib[o][n][F'(B)]\arrow[d,"{F'(h)\st}"]\\
		\Fib[o][n][F(A)] \arrow[r,"{\alpha_A\st}"'] \& \Fib[o][n][F'(A)]
	\end{cd}
	where the two isomorphisms are the ones given by the pseudofunctoriality of $\Fib[o][n][-]$ (see \prox\ref{propopfib-}).	We are using that $F(h)\c \alpha_A=\alpha_B\c F'(h)$ by naturality of $\alpha$. Then $\alpha_{-}\st$ is a pseudonatural transformation because $\Fib[o][n][-]$ is a pseudofunctor. As $\alpha_{-}\st$ is a morphism in the (large) $2$-category $\mPsoplax{\A\op}{\CATlarge}$, we have that $\alpha_{-}\st\c -$ is a functor. Considering $F''\ar{\alpha'}F'\ar{\alpha}F$ in $\m{\A}{\CAT}$, there is a an invertible modification
	$${(\alpha'_{-})}\st \c \alpha_{-}\st\iso {(\alpha\c \alpha')}_{-}\st$$
	with components given by the pseudofunctoriality of $\Fib[o][n][-]$. And then whiskering with this gives the natural isomorphism that regulates the image on the composite $\alpha\c \alpha'$.
	
	Given $\delta\:\alpha\aR{}\beta\:F'\to F$ in $\m{\A}{\CAT}$, we define the image on $\delta$ to be $\delta_{-}\st\star -$, where $\delta_{-}\st$ is the modification that has components $\delta_A\st$ on every $A\in \A$ (see \prox\ref{propopfib-}). This forms indeed a modification by pseudofunctoriality of $\Fib[o][n][-]$. It is straightforward to check that $\HomC{\mPsoplax{\A\op}{\CATlarge}}{\Delta 1}{\Fib[o][\CAT][+(-)]}$ is a pseudofunctor.
	
	We prove that $\G{F(-)}\c -$ is pseudonatural in $F\in {\m{\A}{\CAT}}\op$. Given $\alpha\:F'\to F$ in $\m{\A}{\CAT}$, we define the structure $2$-cell on $\alpha$ to be $\G{\alpha_{-}}\star -$, where $\G{\alpha_{-}}$ is the invertible modification
	\begin{cd}[5.6][5.6]
		\m{F(-)}{\Catzero} \arrow[r,simeq,"{\G{F(-)}}"]\arrow[d,"{-\c \alpha}"']\& \Fib[o][n][F(-)] \arrow[d,"{\alpha_{-}\st}"]\\
		\m{F'(-)}{\Catzero} \arrow[ru,iso,"{\G{\alpha_{-}}}"{inner sep=0.5ex}]\arrow[r,simeq,"{\G{F'(-)}}"']\& \Fib[o][n][F'(-)]
	\end{cd} 
	with components defined by the pseudonaturality of the Grothendieck construction in the base (see \prox\ref{propgrothconstrispseudonat}). The latter pseudonaturality also guarantees that $\G{\alpha_{-}}$ is a modification. Whence $\G{\alpha_{-}}\star -$ is a natural isomorphism. We then conclude that $\G{F(-)}\c -$ is pseudonatural in $F\in {\m{\A}{\CAT}}\op$ because the needed equalities of modifications can be checked on components, where everything holds because the Grothendieck construction is pseudonatural in the base (we also need the $2$-dimensional condition of this pseudonaturality).
\end{proof}

\begin{rem}
	An object of $\HomC{\mPsoplax{\A\op}{\CATlarge}}{\Delta 1}{\Fib[o][\CAT][F(-)]}$ is essentially a collection of opfibrations on every index $A\in \A$ together with a compact information on how to move between different indexes. The last ingredient that we need in order to prove our main result is that we can pack these data in terms of an opfibration in $\m{\A}{\CAT}$ over $F$.
\end{rem}

\begin{prop}\label{propopfibsonindexes}
	There is an isomorphism of categories
	$$\HomC{\mPsoplax{\A\op}{\CATlarge}}{\Delta 1}{\Fib[o][\CAT][F(-)]}\iso \Fib[o][\m{\A}{\CAT}][F]$$
	which is pseudonatural in $F$ \pteor{with the structure 2-cells being identities}.
\end{prop}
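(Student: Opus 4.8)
The plan is to unpack both sides of the claimed isomorphism using the explicit descriptions already established and to observe that they carry the same data, packaged differently. On the right, by \prox\ref{propcharactopfibacat} and \prox\ref{propcharactmoropfibacat}, an object of $\Fib[o][\m{\A}{\CAT}][F]$ is a natural transformation $\phi\:G\to F$ with $G\:\A\to\CAT$ such that every component $\phi_A\:G(A)\to F(A)$ is a split opfibration and every naturality square is cleavage preserving, while a morphism is a natural transformation $\xi\:G\to G'$ over $F$ with cleavage preserving components. On the left, I would unwind the hom-category of $\mPsoplax{\A\op}{\CAT}$: an object is an oplax natural transformation $\eta\:\Delta 1\Rightarrow\Fib[o][\CAT][F(-)]$, which amounts to a family of split opfibrations $\phi_A\deq\eta_A\in\Fib[o][\CAT][F(A)]$ together with, for each $h\:A\to B$ in $\A$, a structure morphism $\eta_h\:\phi_A\to F(h)\st\phi_B$ in $\Fib[o][\CAT][F(A)]$, subject to the unit and composition coherences of an oplax natural transformation (which incorporate the pseudofunctor constraints of $\Fib[o][\CAT][-]$ from \prox\ref{propopfib-}); a morphism is a modification, i.e.\ a family of cleavage preserving morphisms $\xi_A\:\phi_A\to\psi_A$ compatible with the structure morphisms.

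First I would set up the bijection on objects. Given $\phi\:G\to F$ on the right, I keep each $\phi_A$, and for each $h\:A\to B$ I transpose the cleavage preserving naturality square of $\phi$ at $h$ through the universal property of the pullback $F(h)\st G(B)$, obtaining a morphism $\eta_h\:\phi_A\to F(h)\st\phi_B$ over $F(A)$; that this $\eta_h$ is cleavage preserving — hence a morphism of $\Fib[o][\CAT][F(A)]$ — is exactly the naturality square being cleavage preserving, once one uses that the universal square exhibiting $F(h)\st\phi_B$ as a pullback is itself cleavage preserving (\remx\ref{rempullbackopfib}). Conversely, from such data I recover $G(h)\:G(A)\to G(B)$ as $\eta_h$ followed by the pullback projection $F(h)\st\phi_B\to\phi_B$. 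The point that needs checking is that, under this dictionary, strict functoriality of $G$ corresponds precisely to the unit and composition coherences of $\eta$; for compositions this comes down to the pullback projections being compatible with the composition constraints of the pseudofunctor $\Fib[o][\CAT][-]$, i.e.\ to the fact that the projection along $F(h'h)$ equals the projection along $F(h')$ precomposed with the $F(h)$-pullback of that projection, modulo the canonical pullback-pasting isomorphism — a short diagram chase with the universal property of pullbacks.

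For morphisms, a modification between two such oplax natural transformations is a family of cleavage preserving morphisms $\xi_A\:\phi_A\to\psi_A$ whose compatibility with the structure morphisms, transposed through the pullbacks, is precisely the naturality of the induced $\xi\:G\to G'$ over $F$; by \prox\ref{propcharactmoropfibacat} this is exactly a morphism of $\Fib[o][\m{\A}{\CAT}][F]$. The two assignments preserve identities and composition on the nose and are mutually inverse, so they yield the isomorphism of categories, which is visibly the identity on the underlying indexed data $\bigl((\phi_A)_A,(\eta_h)_h\bigr)$.

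For $2$-naturality in $F$, I would note that for $\sigma\:F'\to F$ in $\m{\A}{\CAT}$ the pullback functor $\sigma\st$ acting on $\Fib[o][\m{\A}{\CAT}][F]$ is, by \remx\ref{rempullbackopfib} and \prox\ref{propopfib-} applied to $\L=\m{\A}{\CAT}$, computed componentwise as the family of pullbacks $\sigma_A\st$, and the operation $\sigma_{-}\st\c-$ on the other side is built from exactly the same componentwise pullbacks (see \prox\ref{propopfib-} and the proof of \prox\ref{propgrothconstroneveryindex}); since the isomorphism above only repackages componentwise data it strictly intertwines these operations, and likewise matches the pseudofunctoriality constraints of both sides and their actions on $2$-cells $\delta\:\sigma\Rightarrow\sigma'$ (given on both sides by the modifications with components $\delta_A\st$), so it is a strictly $2$-natural isomorphism of pseudofunctors ${\m{\A}{\CAT}}\op\to\CAT$. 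The main obstacle I anticipate is the object correspondence — precisely, verifying that strict functoriality of $G$ matches the oplax-cocone coherences — which needs careful bookkeeping of the pseudofunctor constraints of $\Fib[o][\CAT][-]$ and their interaction with the pullback projections, but no genuinely new idea beyond the universal property of pullbacks.
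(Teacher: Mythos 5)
Your proposal is correct and follows essentially the same route as the paper's proof: both construct the two mutually inverse functors by keeping the components $\phi_A$ and transposing the naturality squares through the universal property of the pullback $F(h)\st\phi_B$ (relying on \prox\ref{propcharactopfibacat}, \prox\ref{propcharactmoropfibacat} and the fact that the universal pullback square is cleavage preserving), and both establish strict $2$-naturality in $F$ by observing that pullbacks in $\m{\A}{\CAT}$ are computed pointwise so that everything can be checked on components. The coherence bookkeeping you flag as the main obstacle is exactly the point the paper also dispatches via the pseudofunctoriality of the pullback.
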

\begin{proof}
	Given $\phi\:G\to F$ an opfibration in $\m{\A}{\CAT}$, we produce an oplax natural
	$$[\phi]\:\Delta 1\aoplax{}{\Fib[o][\CAT][F(-)]}\:\A\op\to \CATlarge.$$
	For every $A\in \A$, we define $[\phi]_A\deq \phi_A$, thanks to \prox\ref{propcharactopfibacat}. For every $h\:A\to B$ in $\A$, the structure $2$-cell $[\phi]_h$ is the functor $\phi_A\to F(h)\st (\phi_B)$ defined by the universal property of the pullback in $\CAT$:
	\pbsqunivv[pos=0.3]{G(A)}{G(h)}{\phi_A}{[\phi]_h}{F(h)\st (G(B))}{G(B)}{F(A)}{F(B)}{}{F(h)\st (\phi_B)}{\phi_B}{F(h)}
	$[\phi]_h$ is cleavage preserving because $(G(h),F(f))$ and the universal square that exhibits the pullback are cleavage preserving, thanks to \prox\ref{propcharactopfibacat}. $[\phi]$ is an oplax natural transformation by the universal property of the pullback, using also the pseudofunctoriality of the pullback.

	Given
	$$\gamma\:\Delta 1\aoplax{}{\Fib[o][\CAT][F(-)]},$$
	we produce an opfibration $\widehat{\gamma}\:G\to F$ in $\m{\A}{\CAT}$. We define the ($2$-)functor $G$ sending $A\in \A$ to $\dom(\gamma_A)$ and $h\:A\to B$ to the composite above of the diagram
	\begin{cd}[5.5][5.5]
		\dom(\gamma_A)\arrow[r,"{\gamma_h}"]\arrow[d,"{\gamma_A}"'] \& F(h)\st \dom(\gamma_B)\arrow[r,"{}"]\PB{rd}\arrow[d,"{F(h)\st (\gamma_B)}"']\& \dom(\gamma_B)\arrow[d,"{\gamma_B}"] \\
		F(A)\arrow[r,equal,"{}"] \& F(A)\arrow[r,"{F(h)}"']\& F(B)
	\end{cd}
	$G$ is a functor because $\gamma$ is oplax natural. For every $A\in \A$, we define $\widehat{\gamma}_A\deq\gamma_A$. Then $\widehat{\gamma}$ is a natural transformation by construction of $G$. And the naturality squares of $\widehat{\gamma}$ are cleavage preserving because every $\gamma_h$ and every universal square that exhibits a pullback are cleavage preserving. By \prox\ref{propcharactopfibacat}, we conclude that $\widehat{\gamma}$ is a split opfibration in $\m{\A}{\CAT}$.
	
	We can extend both constructions to functors, that will be inverses of each other. Given a cleavage preserving morphism $\xi\:\phi\to \psi$ between split opfibrations in $\m{\A}{\CAT}$ over $F$, we produce a modification $[\xi]\:[\phi]\aM{}[\psi]$. For every $A\in \A$, we define $[\xi]_A\deq \xi_A$, thanks to \prox\ref{propcharactmoropfibacat}. It is straightforward to prove that this is a modification using the universal property of the pullback. Then $[-]$ is readily seen to be a functor, because the conditions can be checked on components. Given a modification
	$$\zeta\:\gamma\aM{}\delta\:\Delta 1\aoplax{}{\Fib[o][\CAT][F(-)]},$$
	we produce a cleavage preserving morphism $\widehat{\zeta}\:\widehat{\gamma}\to \widehat{\delta}$. For every $A\in \A$, we define $\widehat{\zeta}_A\deq \zeta_A$, and this is then clearly cleavage preserving. $\widehat{\zeta}$ is a natural transformation because $\zeta$ is a modification. By \prox\ref{propcharactmoropfibacat}, we conclude that $\widehat{\zeta}$ is a cleavage preserving morphism. Then $\widehat{-}$ is readily seen to be a functor because the conditions can be checked on components. It is straightforward to check that $[-]$ and $\widehat{-}$ are inverses of each other.

	We now prove the pseudonaturality in $F$, with the structure 2-cells being identities, of the isomorphism of categories we have just produced. The left hand side extends to a pseudofunctor by the proof of \prox\ref{propgrothconstroneveryindex}, while the right hand side extends to a pseudofunctor by \prox\ref{propopfib-}. Given a morphism $\alpha\:F'\to F$ in $\m{\A}{\CAT}$, we show that the following square is commutative:
	\begin{cd}[4][3.5]
		{\Fib[o][\m{\A}{\CAT}][F]} \arrow[r,aiso,"{}"]\arrow[d,"{\alpha\st}"']\& {\HomC{\mPsoplax{\A\op}{\CATlarge}}{\Delta 1}{\Fib[o][\CAT][F(-)]}} \arrow[d,"{\alpha_{-}\st\c -}"]\\
		{\Fib[o][\m{\A}{\CAT}][F']}\arrow[r,aiso,"{}"] \& 	{\HomC{\mPsoplax{\A\op}{\CATlarge}}{\Delta 1}{\Fib[o][\CAT][F'(-)]}}{}
	\end{cd}
	Let $\phi\:G\to F$ be a split opfibration in $\m{\A}{\CAT}$. For every $A\in \A$, since pullbacks in $\m{\A}{\CAT}$ are calculated pointwise,
	$$(\alpha_{-}\st\c [\phi])_A=\alpha_A\st(\phi_A)=(\alpha\st \phi)_A=[\alpha\st \phi]_A.$$
	For every $h\:A\to B$ in $\A$, we have that $[\alpha\st \phi]_h$ is equal to the pasting
	\begin{cd}[5.9][5.9]
		\1 \arrow[d,equal,"{}"]\arrow[r,"{\phi_B}"] \& \Fib[o][n][F(B)]\arrow[r,iso,shift right=7ex,"{}",start anchor={[xshift=-11.2ex]}] \arrow[r,iso,shift right=3.5ex,"{}",end anchor={[xshift=11.2ex]}]]\arrow[rd,"{{(F(h)\c \alpha_A)}\st}"{description}] \arrow[d,"{F(h)\st}"']\arrow[r,"{\alpha_B\st}"] \& \Fib[o][n][F'(B)]\arrow[d,"{F'(h)\st}"]\\
		\1 \arrow[ru,Rightarrow,"{[\phi]_h}"{pos=0.585,inner sep=0.35ex},shorten <=3.5ex,shorten >=3ex]\arrow[r,"{\phi_A}"'] \& \Fib[o][n][F(A)] \arrow[r,"{\alpha_A\st}"'] \& \Fib[o][n][F'(A)]
	\end{cd}
	by the universal property of the pullback, using again that pullbacks in $\m{\A}{\CAT}$ are calculated pointwise. It is then easy to see that the square above is commutative on morphisms $\xi\:\phi\to \psi$ as well, since it can be checked on components $A\in \A$.
	
	We prove that identities are the structure $2$-cells of an isomorphic pseudonatural transformation
	$$\Fib[o][\m{\A}{\CAT}][+] \iso \HomC{\mPsoplax{\A\op}{\CATlarge}}{\Delta 1}{\Fib[o][\CAT][+(-)]}\:{\m{\A}{\CAT}}\op\to \CATlarge$$
	This means that the isomorphisms that regulate the image of the two pseudofunctors on identities and compositions are compatible, and that the two pseudofunctors agree on $2$-cells. The first condition holds because it can be checked on components and pullbacks in $\m{\A}{\CAT}$ are calculated pointwise (choosing pullbacks along identities to be the identity). The second condition is, for every $\delta\:\alpha\aR{}\beta\:F'\to F$ in $\m{\A}{\CAT}$,
	$$[-]\star \delta\st=(\delta_{-}\st \star -)\star [-]$$ 
	This can be checked on components $\phi\:G\to F$ (split opfibration in $\m{\A}{\CAT}$). On such components we need to prove an equality of modifications, that can be then checked on components $A\in \A$. So we need to show
	$$(\delta\st \phi)_A=\delta_A\st (\phi_A).$$
	This holds because the components of the liftings along $\phi$ are the liftings along the components of $\phi$. Indeed, for a general $\theta$ as below,
	\begin{cd}[4.3][4.5]
		\y{A}\arrow[r,"{x}"]\arrow[dd,equal,"{}"]\& X \arrow[dd,equal]\arrow[rr,bend left=25,"{\alpha}",""'{name=A}]\arrow[rr,bend right=25,"{\theta\stb \alpha}"',""{name=B}]\& \& G \arrow[dd,"{\phi}"]\\[1ex]
		\&\& |[alias=W]| G \arrow[rd,bend left=20,"{\phi}"]\\[-5ex]
		\y{A}\arrow[r,"{x}"]\&X \arrow[ru,bend left=18,"{\alpha}"]\arrow[rr,bend right=18,"{\beta}"',""{name=Q}] \&\hphantom{.}\& F
		\arrow[from=A,to=B,Rightarrow,"{\h[2]\ov{\theta}^\alpha}"]
		\arrow[from=W,to=Q,Rightarrow,"{\h[2]\theta}"]
	\end{cd}
	the second condition of \defx\ref{defopfibinatwocat} ensures that the lifting of $\theta_{A,x}$ along $\phi$ is equal to $\ov{\theta}^\alpha_{A,x}$. But the former is also the lifting of $\theta_{A,x}$ (seen as a morphism in $F(A)$) along $\phi_A$, thanks to \prox\ref{propcharactopfibacat}. And everything works on morphisms $f\:x\to x'$ in $X(A)$ as well by cartesianity arguments, using the naturality of $\ov{\theta}^\alpha_A$.
\end{proof}

We are now ready to prove our main result.

\begin{teor}\label{teormain}
	Let $\A$ be a small category and consider the functor $2$-category $\m{\A}{\CAT}$. For every $2$-functor $F\:\A\to \CAT$, there is an equivalence of categories
	$$\Fib[o][\m{\A}{\CAT}][F]\simeq \m{\Grothdiag{F}}{\CAT}$$
	between split opfibrations in $\m{\A}{\CAT}$ over $F$ and $2$-copresheaves on the Grothendieck construction $\Groth{F}$ of $F$. Moreover this equivalence is pseudonatural in $F$.
\end{teor}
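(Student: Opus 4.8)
The plan is to assemble the equivalence from the three comparisons already proved in \prox\ref{propapplyoplaxcolim}, \prox\ref{propgrothconstroneveryindex} and \prox\ref{propopfibsonindexes}. First I would note that, since $\Groth{F}$ is a $1$-category, a $2$-copresheaf on it is literally the same datum as a functor into $\Cat$, so that $\m{\Grothdiag{F}}{\CAT}=\m{\Grothdiag{F}}{\Cat}$, and this identification is strictly $2$-natural in $F$. Chaining it with
$$\m{\Grothdiag{F}}{\Cat}\;\iso\;\HomC{\moplax{\A\op}{\CAT}}{\Delta 1}{\m{F(-)}{\Cat}}\;\simeq\;\HomC{\mPsoplax{\A\op}{\CAT}}{\Delta 1}{\Fib[o][\CAT][F(-)]}\;\iso\;\Fib[o][\m{\A}{\CAT}][F],$$
where the first isomorphism is \prox\ref{propapplyoplaxcolim}, the middle equivalence is \prox\ref{propgrothconstroneveryindex} (recalling that the oplax cocone category computed in $\moplax{\A\op}{\CAT}$ coincides with the one computed in $\mPsoplax{\A\op}{\CAT}$, as observed at the start of its proof), and the last isomorphism is \prox\ref{propopfibsonindexes}, produces the desired equivalence of categories $\Fib[o][\m{\A}{\CAT}][F]\simeq\m{\Grothdiag{F}}{\CAT}$.

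For the pseudonaturality in $F$, I would compose the three comparisons as morphisms in the $2$-category of pseudofunctors ${\m{\A}{\CAT}}\op\to\CAT$, pseudonatural transformations and modifications. All four of the sources and targets involved are such pseudofunctors: the two outer ones by \prox\ref{propapplyoplaxcolim} and \prox\ref{propopfibsonindexes}, and the two inner ones as exhibited in the proofs of \prox\ref{propgrothconstroneveryindex} and \prox\ref{propopfibsonindexes} (using \prox\ref{propopfib-}). The key point is that these presentations agree at the two interfaces: $F\mapsto\HomC{\moplax{\A\op}{\CAT}}{\Delta 1}{\m{F(-)}{\Cat}}$ is simultaneously the target of \prox\ref{propapplyoplaxcolim} and the source of \prox\ref{propgrothconstroneveryindex}, and $F\mapsto\HomC{\mPsoplax{\A\op}{\CAT}}{\Delta 1}{\Fib[o][\CAT][F(-)]}$ is simultaneously the target of \prox\ref{propgrothconstroneveryindex} and the source of \prox\ref{propopfibsonindexes}. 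Since the outer two comparisons are strictly $2$-natural (hence pseudonatural) and the middle one is pseudonatural, their composite is a pseudonatural transformation, with structure $2$-cells the pastings of those of the three factors; each component being an equivalence of categories and equivalences being closed under composition, the composite is a pseudonatural equivalence, which is precisely the claim.

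The genuinely substantive inputs --- the pseudonaturality in the base of the ordinary Grothendieck construction (\prox\ref{propgrothconstrispseudonat}) and the repackaging of an oplax cocone of componentwise opfibrations into a single opfibration in $\m{\A}{\CAT}$ (\prox\ref{propopfibsonindexes}) --- have already been established, so I do not expect any real obstacle here: the only thing needing attention is the bookkeeping of composing three (pseudo)natural transformations, i.e.\ checking that the pseudofunctorial structures in $F$ on the interface categories literally coincide. This is immediate from the explicit descriptions given in the proofs of \prox\ref{propgrothconstroneveryindex}, \prox\ref{propopfibsonindexes} and \prox\ref{propopfib-}.
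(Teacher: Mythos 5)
Your proposal is correct and follows exactly the paper's own argument: the proof of Theorem~\ref{teormain} likewise composes the chain of equivalences from Propositions~\ref{propapplyoplaxcolim}, \ref{propgrothconstroneveryindex} and \ref{propopfibsonindexes}, identifies $2$-functors out of the $1$-category $\Groth{F}$ with their underlying functors into $\Cat$, and concludes pseudonaturality in $F$ from that of the three factors. No gaps; the bookkeeping points you flag (agreement of the pseudofunctorial structures at the interfaces) are exactly what the cited propositions supply.
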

\begin{proof}
	It suffices to compose the equivalences of categories of \prox\ref{propapplyoplaxcolim}, \prox\ref{propgrothconstroneveryindex} and \prox\ref{propopfibsonindexes}. 
	\begin{center}
		\linesep{1.9}
		\begin{tabular}{LL}
			\m{\Grothdiag{F}}{\Catzero} \iso \HomC{\moplax{\A\op}{\CATlarge}}{\Delta 1}{\m{F(-)}{\Catzero}}\simeq\\
			\simeq \HomC{\mPsoplax{\A\op}{\CATlarge}}{\Delta 1}{\Fib[o][\CAT][F(-)]}\iso \Fib[o][\m{\A}{\CAT}][F]
		\end{tabular}
	\end{center}
	Notice that a $2$-functor from a category into $\CAT$ is the same thing as its underlying functor. As all three equivalences are pseudonatural in $F\in \m{\A}{\CAT}\op$, so is the composite.
\end{proof}

We can extract the explicit indexed Grothendieck construction from the proof of \thex\ref{teormain}.

\begin{cons}[Indexed Grothendieck construction]\label{consindexedgroth}
	We can follow the chain of equivalences of the proof of \thex\ref{teormain} to get its explicit action. Let $\phi\:G\to F$ be a split opfibration in $\m{\A}{\CAT}$ over $F$. We first produce the oplax natural transformation
	$$[\phi]\:\Delta 1\aoplax{}{\Fib[o][\CAT][F(-)]}$$
	with 
	$[\phi]_A\deq \phi_A$ for every $A\in \A$ and
	\pbsqunivv[pos=0.3]{G(A)}{G(h)}{\phi_A}{[\phi]_h}{F(h)\st (G(B))}{G(B)}{F(A)}{F(B)}{}{F(h)\st (\phi_B)}{\phi_B}{F(h)}
	for every $h\:A\to B$ in $\A$. Then we produce the oplax natural transformation with components on $A\in\A$ and structure $2$-cells on $h\:A\to B$ defined by the pasting
	\begin{cd}[5.9][5.9]
		\1 \arrow[r,"{\phi_B}"]\arrow[d,equal,"{}"]\& \Fib[o][n][F(B)]\arrow[d,"{F(h)\st}"]\arrow[r,"{\Gprime{F(B)}}"] \&[-0.4ex] \m{F(B)}{\Catzero} \arrow[d,"{-\c F(h)}"]\\
		\1 \arrow[ru,Rightarrow,"{[\phi]_h}"{pos=0.585,inner sep=0.35ex},shorten <=3.5ex,shorten >=3ex]\arrow[r,"{\phi_A}"']\& \Fib[o][n][F(A)]\arrow[r,"{\Gprime{F(A)}}"']\arrow[ru,iso,"{\Gprime{F(h)}}"'] \& \m{F(A)}{\Catzero}
	\end{cd}
	where $\Gprime{-}$ is the quasi-inverse of the Grothendieck construction. We have that $\Gprime{F(A)}(\phi_A)$ sends every $X\in F(A)$ to the fibre ${(\phi_A)}_X$ of $\phi_A$ over $X$ and every morphism $\alpha\:X\to X'$ in $F(A)$ to the functor
	$$\alpha\stb\:{(\phi_A)}_X\to {(\phi_A)}_{X'}$$
	that lifts $\alpha$ (on morphisms, it is defined by cartesianity). The structure $2$-cell on $h$ is the natural transformation with component on $X\in F(A)$ given by
	$${(\phi_A)}_X\ar{[\phi]_h}(F(h)\st(\phi_B))_X\iso (\phi_B)_{F(h)(X)}$$
	which coincides with $G(h)$.
	
	\noindent Finally, we induce the $2$-functor\v[0.5]
	\begin{fun}
		\grothprime{\phi} & \: & \Grothdiag{F} \hphantom{CCC}& \too &\hphantom{CC} \CAT \\[1.4ex]
		&&\begin{cd}*[3][3]
			(A,X) \arrow[dd,bend right=70,"{(h,\alpha)}"']\arrow[d,"{(h,\id{})}"]\\
			(B,F(h)(X))\arrow[d,"{(\id{},\alpha)}"]\\
			(B,X')
		\end{cd} & \mto &  \begin{cd}*[3][3]
			{(\phi_A)}_X \arrow[d,"{G(h)}"]\\
			{(\phi_B)}_{F(h)(X)}\arrow[d,"{\alpha\stb}"] \\
			{(\phi_B)}_{X'}
		\end{cd}
	\end{fun}
	using the universal property of the oplax colimit $\Groth{F}$, as in the proof of \thex\ref{teorgrothisoplaxcolim}. 
	
	Let $Z\:\Groth{F}\to \CAT$ be a $2$-functor. We produce the oplax natural transformation $\gamma$ with components on $A\in \A$ and structure $2$-cells on $h\:A\to B$ in $\A$ defined by the pasting
	\begin{cd}[5.9][5.9]
		\1 \arrow[r,"{Z\c \opn{inc}_{\h[1]B}}"]\arrow[d,equal,"{}"]\& \m{F(B)}{\Catzero} \arrow[d,"{-\c F(h)}"] \arrow[r,"{\G{F(B)}}"]\&[-0.4ex]
		\Fib[o][n][F(B)]\arrow[d,"{F(h)\st}"] \\
		\1 \arrow[ru,Rightarrow,"{Z\star \opn{inc}_h}"{pos=0.585,inner sep=0.35ex},shorten <=3.5ex,shorten >=3ex]\arrow[r,"{Z\c \opn{inc}_{\h[1]A}}"']\&
		\m{F(A)}{\Catzero}\arrow[r,"{\G{F(A)}}"']\arrow[ru,iso,"{\G{F(h)}}"']\& \Fib[o][n][F(A)] 
	\end{cd}
	We have that $\G{F(A)}\c Z\c \opn{inc}_{\h[1]A}\:G(A)\to F(A)$ is the Grothendieck construction of the $2$-functor $F(A)\to \CAT$ that sends every object $X\in F(A)$ to $Z(A,X)$ and every morphism $\alpha\:X\to X'$ in $F(A)$ to $Z(\id{},\alpha)$. Its domain $G(A)$ has the following description:
	\begin{description}
		\item[an object] is a pair $(X,\xi)$ with $X\in F(A)$ and $\xi\in Z(A,X)$;
		\item[a morphism $(X,\xi)\to (X',\xi')$] is a pair $(\alpha,\Xi)$ with $\alpha\:X\to X'$ in $F(A)$ and\\ $\Xi\:Z(\id{},\alpha)(\xi)\to \xi'$ in $Z(A,X')$.
	\end{description}
	These are then collected as a split opfibration $\groth{Z}\:G\to F$ in $\m{\A}{\CAT}$ over $F$ whose components on every $A$ are the projections $G(A)\to F(A)$ on the first component. For every $h\:A\to B$ in $\A$, the functor $G(h)$ is defined by the composite above in the diagram
	\begin{cd}[5.5][5.5]
		G(A)\arrow[r,"{\gamma_h}"]\arrow[d,"{\groth{Z}_A}"'] \& F(h)\st G(B)\arrow[r,"{}"]\PB{rd}\arrow[d,"{F(h)\st (\groth{Z}_B)}"']\& G(B)\arrow[d,"{\groth{Z}_B}"] \\
		F(A)\arrow[r,equal,"{}"] \& F(A)\arrow[r,"{F(h)}"']\& F(B)
	\end{cd}
	Explicitly,
	\begin{fun}
		G(h) & \: & G(A)\hphantom{C..} & \too & \hphantom{CC.}G(B) \\[1ex]
		&& \fib{(X,\xi)}{(\alpha,\Xi)}{(X',\xi')} & \mto & \fib{(F(h)(X),Z(h,\id{})(\xi))}{(F(h)(\alpha),Z(h,\id{})(\Xi))}{(F(h)(X'),Z(h,\id{})(\xi'))}
	\end{fun}
	
	We can see how this construction is indeed an indexed Grothendieck construction. We essentially collect together triples $(A,X,\xi)$ with $A\in \A$, $X\in F(A)$ and $\xi\in Z(A,X)$.
\end{cons}

\begin{teor}\label{teorrestrictstodisc}
	Let $\A$ be a small category. For every $2$-functor $F\:\A\to \CAT$, the equivalence of categories
	$$\Fib[o][\m{\A}{\CAT}][F]\simeq \m{\Grothdiag{F}}{\CAT}$$
	of \thex\ref{teormain} restricts to an equivalence of categories
	$$\Fib+[b][\m{\A}{\CAT}][F]\simeq \m{\Grothdiag{F}}{\Set}$$
	between discrete opfibrations in $\m{\A}{\CAT}$ over $F$ with small fibres and $\Set$-valued copresheaves on $\Groth{F}$. Moreover this equivalence is pseudonatural in $F$.
\end{teor}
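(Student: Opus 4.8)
The plan is to restrict each of the three steps in the proof of \thex\ref{teormain} to the appropriate full subcategories. The inclusion $\Set\hookrightarrow\Cat$ of sets as discrete categories is fully faithful, so for every small category $\C$ the functor $2$-category $\m{\C}{\Set}$ is the full sub-$2$-category of $\m{\C}{\Cat}$ on the $2$-functors valued in discrete categories; likewise, by \defx\ref{defopfibinatwocat}, the category $\Fib[b][\m{\A}{\CAT}][F]$ of discrete opfibrations is a \emph{full} subcategory of $\Fib[o][\m{\A}{\CAT}][F]$. Since a fully faithful functor reflects and creates all the structure in play, it is enough to check that each of the maps of \prox\ref{propapplyoplaxcolim}, \prox\ref{propgrothconstroneveryindex} and \prox\ref{propopfibsonindexes} carries one such full subcategory onto the other; the restricted functors are then automatically quasi-inverse equivalences, and the structure $2$-cells of the pseudonatural equivalence of \thex\ref{teormain} are natural isomorphisms which remain such on the subcategories, so the pseudonaturality restricts as well.

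First I would restrict \prox\ref{propapplyoplaxcolim}: a $2$-functor $Z\:\Groth{F}\to\Cat$ is valued in $\Set$ if and only if $Z(A,X)$ is discrete for every object $(A,X)$, and, since by \remx\ref{remfactorizationcartvert} every object and every morphism of $\Groth{F}$ is assembled from the components $\opn{inc}_{\h[1]A}$ and structure cells $\opn{inc}_h$ appearing in the proof of \thex\ref{teorgrothisoplaxcolim}, this holds precisely when each $Z\c\opn{inc}_{\h[1]A}\:F(A)\to\Cat$ is valued in $\Set$, i.e.\ when the corresponding oplax cocone has all its components in $\m{F(-)}{\Set}$. Next, \prox\ref{propgrothconstroneveryindex} restricts because, by \thex\ref{usualgrothconstr}, the Grothendieck construction restricts to a pseudonatural adjoint equivalence $\G{-}\:\m{-}{\Set}\simeq\Fib[b][n][-]$ which is literally a restriction of the non-discrete one; whiskering with $F\op$ and postcomposing as in the proof of \prox\ref{propgrothconstroneveryindex} then yields an equivalence $\HomC{\moplax{\A\op}{\CAT}}{\Delta 1}{\m{F(-)}{\Set}}\simeq\HomC{\mPsoplax{\A\op}{\CAT}}{\Delta 1}{\Fib[b][\CAT][F(-)]}$, using that $\Fib[b][\CAT][-]$ is a sub-pseudofunctor of $\Fib[o][\CAT][-]$ by \prox\ref{propopfib-}. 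Finally, \prox\ref{propopfibsonindexes} restricts because, by \prox\ref{propcharactopfibacat}, a morphism $\phi\:G\to F$ in $\m{\A}{\CAT}$ is a discrete opfibration exactly when every component $\phi_A$ is discrete, and the mutually inverse functors $[-]$ and $\widehat{-}$ of \prox\ref{propopfibsonindexes} pair $\phi$ with the oplax cocone whose $A$-component is $\phi_A$; on morphisms there is nothing further to check, cleavage-preserving maps between discrete opfibrations being merely maps in the slice.

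Composing the three restricted steps gives the equivalence $\Fib[b][\m{\A}{\CAT}][F]\simeq\m{\Grothdiag{F}}{\Set}$, pseudonatural in $F$. Concretely, tracing through \conx\ref{consindexedgroth}: if $\phi$ is a discrete opfibration in $\m{\A}{\CAT}$ then each fibre ${(\phi_A)}_X$ is a discrete category, that is a set, so $\grothprime{\phi}\:\Groth{F}\to\CAT$ actually lands in $\Set$; conversely, for $Z\:\Groth{F}\to\Set$ the categories $Z(A,X)$ are discrete, hence each Grothendieck construction $\G{F(A)}\c Z\c\opn{inc}_{\h[1]A}$ is a discrete opfibration and therefore, by \prox\ref{propcharactopfibacat}, the assembled $\groth{Z}\:G\to F$ is a discrete opfibration in $\m{\A}{\CAT}$. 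I expect no genuine obstacle: the only non-formal input — that the Grothendieck construction itself restricts to an equivalence between $\Set$-valued copresheaves and discrete opfibrations — is already supplied by \thex\ref{usualgrothconstr}, and the remainder is the bookkeeping of transporting full-subcategory inclusions along the chain. The mildly delicate point is the first step, namely detecting the property ``valued in $\Set$'' through the generating data $\opn{inc}_{\h[1]A}$, $\opn{inc}_h$ of $\Groth{F}$.
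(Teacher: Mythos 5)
Your proposal is correct and follows essentially the same route as the paper: restrict each of the three equivalences of \prox\ref{propapplyoplaxcolim}, \prox\ref{propgrothconstroneveryindex} and \prox\ref{propopfibsonindexes} to the $\Set$-valued/discrete full subcategories and check that the pseudonaturality data restrict. The only (immaterial) divergence is in the first step, where the paper invokes the $2$-naturality in $U$ of the oplax colimit isomorphism of \defx\ref{defoplaxcolim} rather than detecting discreteness through the generating data $\opn{inc}_{\h[1]A}$ as you do.
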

\begin{proof}
	The isomorphism of \prox\ref{propapplyoplaxcolim} restricts to one with $\Set$ on both sides in the place of $\Catzero$ by $2$-naturality in $U$ of the isomorphism given by an oplax colimit (see \defx\ref{defoplaxcolim}). Pseudonaturality in $F$ still holds by the same general argument that guaranteed it with $\Catzero$ on both sides.
	
	The equivalence of \prox\ref{propgrothconstroneveryindex} restricts to one with $\Set$ in the place of $\Catzero$ on the left hand side and discrete opfibrations with small fibres in the place of opfibrations in the right hand side. Indeed the following is a commutative square of pseudonatural transformations:
	\sq[n][5][5.5]{\m{F(-)}{\Set}}{\Fib+[b][n][F(-)]}{\m{F(-)}{\Catzero}}{\Fib[o][n][F(-)]}{\G{F(-)}}{}{}{\G{F(-)}}
	On components $A\in \A$, this is true by the classical \thex\ref{usualgrothconstr}. And it is straightforward to check that it is true on structure $2$-cells as well, since structure $2$-cells are given by the pseudofunctoriality of the pullback. Then pseudonaturality in $F$ holds for the restricted equivalence as well, as one can readily check.
	
	The isomorphism of \prox\ref{propopfibsonindexes} restricts to one with discrete opfibrations with small fibres on both sides in the place of opfibrations, because it suffices to look at the components. Then pseudonaturality in $F$ holds as well, precomposing the pseudonatural transformation produced in the proof of \prox\ref{propopfibsonindexes} with the inclusion of discrete opfibrations with small fibres into opfibrations.
\end{proof}

\begin{rem}
	When $F\:\A\to \Set$, the equivalence of categories
	$$\Fib+[b][\m{\A}{\CAT}][F]\simeq \m{\Grothdiag{F}}{\Set}$$
	becomes the well known
	$$\slice{\m{\A}{\Set}}{F}\simeq \m{\Grothdiag{F}}{\Set}.$$
	Indeed any discrete opfibration $\phi\:G\to F$ in $\m{\A}{\CAT}$ over $F\:\A\to \Set$ with small fibres needs to have $G\:\A\to \Set$, and all functors $G\to F$ in $\m{\A}{\Set}$ are discrete opfibrations with small fibres. Our theorem guarantees that this equivalence is pseudonatural in $F$, which does not seem to appear in the literature.
	
	When $F$ is a representable $\y{A}\:\A\to \Set$, we obtain the famous equivalence
	$$\slice{\m{\A}{\Set}}{\y{A}}\simeq \m{\slice{\A}{A}}{\Set}$$
	between slices of (co)presheaves and (co)presheaves on slices. We will apply its pseudonaturality in $F$ in \exax\ref{exhofstreichuniv} to get a nice candidate for a Hofmann--Streicher universe (see~\cite{hofmannstreicher_liftinggrothuniverses}) in 2-presheaves. 
\end{rem}

\begin{rem}
	The equivalence
	$$\slice{\m{\A}{\Set}}{F}\simeq \m{\Grothdiag{F}}{\Set}.$$
	had many applications in geometry and logic. It is the archetypal case of the fundamental theorem of elementary topos theory, as it shows that every slice of a Grothendieck topos is a Grothendieck topos. 
	
	We can interpret our main theorem as a $2$-dimensional generalization of this. Indeed, the concept of (op)fibrational slice has recently been proposed as the correct upgrade of slices to dimension 2. This idea appears in Ahrens, North and van der Weide's~\cite{ahrensnorthvanderweide_bicategoricaltypetheory}, where it is attributed to Shulman. Our equivalence
	$$\Fib[o][\m{\A}{\CAT}][F]\simeq \m{\Grothdiag{F}}{\CAT}$$
	says that every opfibrational slice of a Grothendieck $2$-topos is again a Grothendieck $2$-topos. Notice that a morphism in a 2-category $\St[J]{\A}$ of stacks is a discrete opfibration if and only if its underlying morphism in $\m{\A\op}{\Cat}$ is so (see the second author's~\cite{mesiti_twoclassifiersdensegenstacks}).
\end{rem}

We can now explore some variations on the indexed Grothendieck construction.

\begin{rem}\label{remopvariations}
	We can change $\A$ to $\A\op$ and get the $2$-category $\m{\A\op}{\CAT}$ of $2$-presheaves. Then $F\:\A\op\to \CAT$. Be careful that, for opfibrations in $\m{\A\op}{\CAT}$, we still need to apply the Grothendieck construction to $F$ as if we did not know that the domain of $F$ is an opposite category. We write $\Grothop{F}$ for this Grothendieck construction on $F$, to emphasize that it is not the most natural one for a $2$-functor $\A\op\to \CAT$. We obtain
	$$\Fib[o][\m{\A\op}{\CAT}][F]\simeq \m{\Grothopdiag{F}}{\CAT}$$
	
	The most natural Grothendieck construction $\Groth{F}$ of a contravariant $2$-functor $F\:\A\op\to \CAT$ appears instead to handle fibrations in the place of opfibrations. Such Grothendieck construction $\Groth{F}$ is the lax colimit of $F$. Then $-\op\:\CAT\to\CAT\co$, where $\CAT\co$ is the dualization on 2-cells of $\CAT$, preserves this colimit. We obtain that $\left(\Groth{F}\right)\op$ is the lax colimit in $\CAT\co$ of $F(-)\op$, which means that
	$${\p{\Grothdiag{F}}}\op=\oplaxcolim{\left(F(-)\op\right)}$$
	in $\CAT$. Then we have the following chain of equivalences of categories:
	\begin{center}
		\linesep{1.9}
		\begin{tabular}{LL}
			\m{{\p{\Grothdiag{F}}}\op}{\Catzero} \iso \HomC{\moplax{\A}{\CATlarge}}{\Delta 1}{\m{{F(-)}\op}{\Catzero}}\simeq\\
			\simeq \HomC{\mPsoplax{\A}{\CATlarge}}{\Delta 1}{\Fib[n][\CAT][F(-)]}\iso \Fib[n][\m{\A\op}{\CAT}][F]
		\end{tabular}
	\end{center}
\end{rem}

\begin{rem}\label{remtwocatvariations}
	We believe that, when $\A$ is a $2$-category, one can still obtain an equivalence of categories 
	$$\Fib[o][\m{\A}{\CAT}][F]\simeq \m{\Grothdiag{F}}{\CAT}$$
	where $\Groth{F}$ is now the $2$-$\Set$-enriched Grothendieck construction (introduced by Street in~\cite{street_limitsindexedbycatvalued} and explored more in detail by the second author in~\cite{mesiti_twosetenrgrothconstrlaxnlimits}). In order to adapt our proof of $\thex\ref{teormain}$ to this setting, one would need $\Groth{F}$ to be a kind of oplax colimit in $\twoCAT$. We believe that $F$ is the $2$-oplax colimit of $F$ followed by the inclusion $i$ of $\CAT$ into $\twoCAT$, where a $2$-oplax natural transformation is a Crans's~\cite{crans_tensorprodforgraycat} oplax $1$-transfor. Such transformations have the same $1$-dimensional conditions of an oplax natural transformation but now also have structure $3$-cells on every $2$-cell in $\A$. Having as codomain $\twoCAT$, they compose well. The added structure $3$-cells are precisely what one needs in order to encode the $2$-cells
	$$\underline{\delta}^X\:(f,F(\delta)_X)\aR{}(g,\id{})\:(A,X)\to (B,F(g)(X))$$
	in $\Groth{F}$ for every $\delta\:f\aR{}g\:A\to B$ in $\A$. As explained by the second author in~\cite{mesiti_twosetenrgrothconstrlaxnlimits}, every $2$-cell in $\Groth{F}$ is a whiskering of such particular $2$-cells (in some sense, these $2$-cells are the only ones we need). The middle equivalence of the chain that proves our $\thex\ref{teormain}$ would then be given by the $2$-$\Set$-enriched Grothendieck construction. Finally, the last part of the chain would probably work as well, with the structure $3$-cells managing to encode the action of $G$ on $2$-cells. However, we have not checked these details.
	
	Such generalization would be helpful also to handle non-split opfibrations and pseudofunctors from $\Groth{F}$ into $\CAT$, for which we cannot reduce to functors into $\Catzero$. Of course, for this, one could also extend the explicit indexed Grothendieck construction.
	
	For the restriction to copresheaves and discrete opfibrations, we need to be careful that
	$$\HomC{\twoCAT}{\Grothdiag{F}}{i(\Set)}\iso \HomC{\CAT}{\pi\stb\Grothdiag{F}}{\Set}$$
	where $\pi\st$ is the left adjoint of $i\:\CAT\to \twoCAT$.
	So a quotient of $\Groth{F}$ by its $2$-cells appears: morphisms in $\Groth{F}$ that were connected via a $2$-cell becomes equal.
\end{rem}

\section{Examples}\label{sectionexamples}

In this section, we show some interesting examples. We can vary both $\A$ and $F$ in our main results. We start with $\A=\1$, that recovers the usual Grothendieck construction. $\A=\2$ represents the simultaneous Grothendieck construction of two opfibrations connected by an arrow. While $\A=\Delta$ considers (co)simplicial categories. We also explore other examples. In particular, we obtain a nice candidate for a Hofmann--Streicher universe in 2-presheaves. 

\begin{rem}[$\A=\1$]\label{exa=1}
	When $\A=\1$, the $2$-category $\m{\A}{\CAT}$ reduces to $\CAT$. A $2$-functor $F\:\1\to \CAT$ is just a small category $\C$ and $\Groth{F}=\C$. So \thex\ref{teormain} gives the classical
	$$\Fib[o][n][\C]\simeq \m{\C}{\CAT}.$$
	The explicit indexed Grothendieck construction becomes the usual Grothendieck construction. Indeed the first part and the last part of the chain become trivial, while the middle part is the Grothendieck construction on the unique index $\ast\in \1$.
\end{rem}

\begin{exampl}[$\A$ discrete]
	When $\A$ is discrete, the $2$-category $\m{\A}{\CAT}$ is a product of copies of $\CAT$. A $2$-functor $F\:\A\to \CAT$ just picks as many categories as the cardinality of $\A$, without bonds. Since the diagram $F$ is parametrized by a discrete category, we have that $\Groth{F}=\oplaxcolim{F}$ becomes the coproduct of the categories picked by $F$. And $\m{\Groth{F}}{\CAT}$ is then a collection of functors from every such category into $\CAT$. On the other hand, by \prox\ref{propcharactopfibacat}, a split opfibration in $\m{\A}{\CAT}$ is just a collection of as many opfibrations as the cardinality of $\A$, without bonds. The indexed Grothendieck construction
	$$\m{\Grothdiag{F}}{\CAT}\simeq \Fib[o][\m{\A}{\CAT}][F]$$
	is the simultaneous Grothendieck construction of all the functors into $\CAT$ that are collected as a single functor from the coproduct. This shows the indexed nature of the indexed Grothendieck construction.
\end{exampl}

\begin{exampl}[$\A=\2$]\label{exa=2}
	When $\A=\2$, the $2$-category $\m{\A}{\CAT}$ is the arrow category of $\CAT$ and $F\:\2\to \CAT$ is a functor $\t{F}\:\C\to \D$. The Grothendieck construction $\Groth{F}$ has as objects the disjoint union of the objects of $\C$ and of $\D$, denoted respectively $(0,C)$ and $(1,D)$ with $C\in \C$ and $D\in \D$. The morphisms of $\Groth{F}$ are of three kinds: morphisms in $\C$ (over $0$), morphisms in $\D$ (over $1$) and morphisms over $0\to 1$ that represents the objects $(C,D,\t{F}(C)\to D)$ of the comma category $\slice{\t{F}}{\D}$. On the other hand, given $G\:\2\to \CAT$ corresponding to $\t{G}\:\E\to \L$, a split opfibration $\phi\:G\to F$ in $\m{\2}{\CAT}$ is a cleavage preserving morphism
	\sq[n][5.75][5.75]{\E}{\L}{\C}{\D}{\t{G}}{\phi_0}{\phi_1}{\t{F}}
	between split opfibrations $\phi_0$ and $\phi_1$. So \thex\ref{teormain} gives an equivalence of categories between morphisms of (classical) split opfibrations (in $\CAT$) which have $\t{F}$ as second component and $2$-copresheaves on a category that collects together $\C$, $\D$ and the comma category $\slice{\t{F}}{\D}$.
	
	Following \conx\ref{consindexedgroth}, we get the explicit (quasi-inverse of the) indexed Grothendieck construction in this case. The arrow above between split opfibrations $\phi_0$ and $\phi_1$ can be reorganized as the functor $\Groth{F}\to \CAT$ that sends
	\begin{enum}
		\item $(0,C)$ to the fibre of $\phi_0$ on $C$ and every morphism $f$ in $\C$ to the functor $f\stb$ that lifts it along $\phi_0$;
		\item $(1,D)$ to the fibre of $\phi_1$ on $D$ and every morphism $g$ in $\D$ to the functor $g\stb$ that lifts it along $\phi_1$;
		\item every morphism corresponding to an object $(C,D,\alpha\:\t{F}(C)\to D)$ of the comma category $\slice{\t{F}}{\D}$ to the composite functor
		$${(\phi_0)}_C\ar{\t{G}} {(\phi_1)}_{\t{F}(C)}\ar{\alpha\stb}{(\phi_1)}_D.$$
	\end{enum}
	
	In the particular case in which $F\:\2\to \Set$, we have that $\t{F}\:S\to T$ is a function between sets. Then $\Groth{F}$ is a poset with objects the disjoint union of the objects of $S$ and of $T$ and such that $(0,s)\leq (1,t)$ with $s\in S$ and $t\in T$ if and only if $\t{F}(s)=t$. On the other hand, a split opfibration in $\m{\2}{\CAT}$ over $F$ is precisely a commutative square in $\Set$ with bottom leg equal to $\t{F}$.
\end{exampl}

\begin{exampl}[$\A=\I$]
	When $\A$ is the walking isomorphism $\I$, we have that $F\:\I\to \CAT$ is an invertible functor $\t{F}\:\C\to \D$. Then $\Groth{F}$ is similar to the one of \exax\ref{exa=2}, but there is now a fourth kind of morphisms, that represents the objects $(D,C,\t{F}^{-1}(D)\to C)$ of the comma category $\slice{\t{F}^{-1}}{\C}$. 
	
	If $F\:\I\to \Set$, the partial order of the poset constructed as in \exax\ref{exa=2} now becomes an equivalence relation. Every object is in relation precisely with itself and with its copy in the other set.
\end{exampl}

\begin{exampl}[$\A=\Delta$]
	When $\A$ is the simplex category $\Delta$, we have that $F\:\Delta\to \CAT$ is a cosimplicial category. This is equivalently a cosimplicial object in $\CAT$ or an internal category in cosimplicial sets. The Grothendieck construction $\Groth{F}$ collects together all the cosimplexes in a total category, taking into account faces and degeneracies. \thex\ref{teormain} gives an equivalence of categories between split opfibrations between cosimplicial categories over $F$ and functors into $\CAT$ from the total category that collects all the cosimplexes given by $F$.
\end{exampl}

\begin{exampl}[$F=\Delta 1$]
	Given any small category $\A$, we can consider $F=\Delta 1\:\A\to \CAT$ the functor constant at the terminal $\1$. We have that $\Groth{\Delta 1}=\A$. So \thex\ref{teormain} gives an equivalence of categories
	$$\Fib[o][\m{\A}{\CAT}][\Delta 1]\simeq \m{\A}{\CAT}.$$
	Indeed, as being opfibred over $\1$ means nothing, a split opfibration $\phi\:G\to \Delta 1$ is a collection of categories $G(A)$ and of functors $G(h)$ for every $h\:A\to B$ in $\A$. This forms a functor $\A\to \CAT$ because $\phi$ is split.
	
	Putting together this equivalence with that of \exax\ref{exa=1}, we obtain
	$$\Fib[o][\m{\A}{\CAT}][\Delta 1]\simeq \Fib[o][\CAT][\A]$$
\end{exampl}

\begin{exampl}[$F=\Delta \B$]
	Given any small category $\A$, we can consider $F=\Delta \B\:\A\to \CAT$ the functor constant at a fixed category $\B$. We have that $\Groth{\Delta \B}=\A\times \B$ and $\groth{\Delta \B}$ is the projection $\A\times \B\to \A$. \thex\ref{teormain} characterizes functors $\A\times \B\to \CAT$, and hence the $\Cat$-enriched profunctors, in terms of split opfibrations in $\m{\A}{\CAT}$ over $\Delta \B$.
\end{exampl}

\begin{exampl}[semidirect product of groups]
	Let $\A$ be the one-object category $\mathcal{B}G$ corresponding to a group $G$. Consider then $F\:\mathcal{B}G\to \CAT$ that sends the unique object of $\mathcal{B}G$ to the one-object category that corresponds to a group $H$. Functoriality of $F$ corresponds precisely to giving a group homomorphism $\rho\:G\to \opn{Aut}(H)$ where $\opn{Aut}(H)$ is the group of automorphisms of $H$. Then the Grothendieck construction $\Groth{F}$ is a one-object category corresponding to the semidirect product $H\rtimes_\rho G$. Thus \thex\ref{teormain} characterizes functors $H\rtimes_\rho G\to \CAT$ in terms of opfibrations in $\m{\mathcal{B}G}{\CAT}$ over the functor $F$ that corresponds with $\rho\:G\to \opn{Aut}(H)$.
\end{exampl}
	
\begin{exampl}[Hofmann--Streicher universe in $2$-presheaves]\label{exhofstreichuniv}
	We apply \thex\ref{teormain} to get a nice candidate for a Hofmann--Streicher universe (see~\cite{hofmannstreicher_liftinggrothuniverses}) in the $2$-category $\m{\A\op}{\CAT}$ of $2$-presheaves. The second author has shown in the following paper~\cite{mesiti_twoclassifiersdensegenstacks} that such candidate is indeed a $2$-dimensional classifier in $\m{\A\op}{\CAT}$, towards a $2$-dimensional elementary topos structure on $\m{\A\op}{\CAT}$. This was actually the starting motivation for the second author to produce the indexed Grothendieck construction.
	
	Recall that the archetypal subobject classifier is $T:\1\to \{T,F\}$ in $\Set$. Every subset $A\cont X$ is classified by its characteristic function $\chi_A\:X\to \{T,F\}$. More precisely, pulling back $T:\1\to \{T,F\}$ gives a bijection
	$$\m{X}{\{T,F\}}\iso \Sub{X}$$
	for every $X\in \Set$, exhibiting $\Set$ as the archetypal elementary topos. Notice that injective functions have as fibres either the empty set or the singleton; we could say that we are classifying morphisms with fibres of dimension $0$.
	
	Moving to dimension $2$, the 2-category $\CAT$ becomes the archetypal elementary $2$-topos. And we now want to classify morphisms with fibres of dimension $1$, i.e.\ fibres that are general sets. As proposed by Weber in~\cite{weber_yonfromtwotop}, the correct $2$-categorical generalization of subobject classifiers are classifiers of discrete opfibrations. The archetypal 2-dimensional classifying process is, in $\CAT$, the construction of the category of elements (i.e.\ the Grothendieck construction restricted to functors into $\Set$). This can be captured as a comma object from $\1\:\1\to \Set$ or as a pullback of the forgetful $\Set\b\to \Set$ from pointed sets to sets, on the line of \thex\ref{teorgrothconstrislaxcomma}. Doing either of the two provides an equivalence of categories
	$$\groth{-}\:\m{\C}{\Set}\aequi \Fib+[b][n][\C]$$
	for every $\C\in \CAT$, exhibiting $\Set$ as the $2$-dimensional universe of the elementary $2$-topos $\CAT$. Looking at the archetypal 2-dimensional classifying process, we can think of a 2-classifier as a Grothendieck construction inside a 2-category. So it is natural to expect an indexed version of the Grothendieck construction to give a 2-classifier in the 2-category of  2-presheaves.
	
	Given a category $\A$, we would like to produce a $2$-dimensional universe $\O$ in the $2$-category $\m{\A\op}{\CAT}$ of $2$-presheaves. We then want an equivalence of categories
	$$\groth{-}\:\m{F}{\O}\aequii \Fib+[b][\m{\A\op}{\CAT}][F]$$
	 for every $F\in \m{\A\op}{\CAT}$. In particular, this needs to hold for representables $F=\y{A}$ with $A\in \A$. So by Yoneda's lemma, we want
	 $$\O(A)\simeq \Fib+[b][\m{\A\op}{\CAT}][\y{A}]$$
	 Trying to define $\O$ to send $A\in \A$ precisely to $\Fib+[b][\m{\A\op}{\CAT}][\y{A}]$, we only get a pseudofunctor $\O'$, defined by \prox\ref{propopfib-}, that does not even clearly land in small categories. \thex\ref{teormain} (together with \remx\ref{remopvariations}) offers a nice way to replace such pseudofunctor $\O'$ with a strict $2$-functor $\O$, that moreover lands in small categories. Indeed it gives an equivalence of categories 
	$$\Fib+[b][\m{\A\op}{\CAT}][\y{A}]\simeq \m{\Grothopdiag{\y{A}}}{\Set}$$
	that is pseudonatural in $A\in \A\op$, by precomposing the equivalence that is pseudonatural in $F\in {\m{\A\op}{\CAT}}\op$ with $\yyop\:\A\op \to {\m{\A\op}{\CAT}}\op$. So the right hand side of the equivalence above gives a strict $2$-functor $\O$ that is pseudonaturally equivalent to $\O'$.
	
	When $\A$ is a category, $$\Grothopdiag{\y{A}}={\left(\slice{\A}{A}\right)}\op \quad\text{ and }\quad \O(A)=\m{{\left(\slice{\A}{A}\right)}\op}{\Set}.$$
	$\O$ acts on morphisms by postcomposition. Notice that in this case $\y{A}\:\A\op\to \Set$ and so the left hand side of the equivalence above simplifies to $\slice{\m{\A\op}{\Set}}{\y{A}}$, but we still need the pseudonaturality in $A$, that does not seem to appear in the literature. Our \thex\ref{teormain} guarantees such pseudonaturality and therefore that we get a strict $2$-functor $\O$ pseudonaturally equivalent to $\O'$. This is a Hofmann--Streicher universe, in line with the ideas of~\cite{hofmannstreicher_liftinggrothuniverses} and with Awodey's recent work~\cite{awodey_onhofmannstreicheruniverses}. The second author has shown in the following paper~\cite{mesiti_twoclassifiersdensegenstacks} that $\O$ is indeed a $2$-classifier in $\m{\A\op}{\CAT}$, by an argument of reduction of the study of $2$-classifiers to dense generators. He has then restricted this $2$-classifier to one in stacks.
	
	When $\A$ is a $2$-category, the $2$-$\Set$-enriched Grothendieck construction (introduced by Street in~\cite{street_limitsindexedbycatvalued} and explored by the second author in~\cite{mesiti_twosetenrgrothconstrlaxnlimits}) gives
	$$\Grothopdiag{\y{A}}={\left(\oplaxslice{\A}{A}\right)}\op$$
	Checking the details of the strategy proposed in \remx\ref{remtwocatvariations}, we would get a refined strict $2$-functor $\O$ defined by
	$$\O(A)=\m{\pi\stb{\left(\oplaxslice{\A}{A}\right)}\op}{\Set}.$$
	Interestingly, such quotients of (op)lax slices give the right weights to represent (op)lax (co)limits as weighted ones, by Street's~\cite{street_limitsindexedbycatvalued}.
\end{exampl}

\subsection*{Acknowledgements}

We would like to thank the anonymous referee and the editor for their helpful comments and suggestions.

\bibliographystyle{abbrv}
\bibliography{Bibliography}

\end{document}